\theoremstyle{plain}
\newtheorem{theorem}{Theorem}[section]
\newtheorem{definition}[theorem]{Definition}
\newtheorem{lemma}[theorem]{Lemma}
\newtheorem{corollary}[theorem]{Corollary}
\newtheorem{proposition}[theorem]{Proposition}
\theoremstyle{remark}
\newtheorem{remark}[theorem]{Remark}
\newtheorem{claim}[theorem]{Claim}
\numberwithin{equation}{section}
\def\dd{\,\mathrm  d}
\newcommand{\loc}{\rm loc}
\renewcommand{\div}{\operatorname{div}}
\newcommand{\curl}{\operatorname{curl}}
\newcommand{\supp}{\operatorname{supp}}
\renewcommand{\leq}{\leqslant}
\renewcommand{\geq}{\geqslant}
\renewcommand{\epsilon}{\varepsilon}
\renewcommand{\phi}{\varphi}
\newcommand{\norm}[1]{\left\lVert#1\right\rVert}
\def\Tend#1#2{\mathop{\longrightarrow}\limits_{#1\rightarrow#2}}
\date\today
\author[M. Ménard]{Matthieu Ménard}
\email{matthieu.menard@univ-grenoble-alpes.fr}
\address{Univ. Grenoble Alpes, CNRS, Institut Fourier, F-38000 Grenoble, France.}
\title[A monokinetic spray model with gyroscopic effects.]{Mean-field limit derivation of a monokinetic spray model with gyroscopic effects.}
\begin{document}
\begin{abstract}
In this paper we derive a two dimensional spray model with gyroscopic effects as the mean-field limit of a system modeling the interaction between an incompressible fluid and a finite number of solid particles. This spray model has been studied by Moussa and Sueur (Asymptotic Anal., 2013), in particular the mean-field limit was established in the case of $W^{1,\infty}$ interactions. \\
First we prove the local in time existence and uniqueness of strong solutions of a monokinetic version of the model with a fixed point method. Then we adapt the proof of Duerinckx and Serfaty (Duke Math. J., 2020) to establish the mean-field limit to the spray model in the monokinetic regime in the case of Coulomb interactions.
\end{abstract}

\maketitle

\setlength{\oddsidemargin}{0pt} 
\setlength{\evensidemargin}{0pt}

\section{Introduction}

The purpose of this paper is to establish the mean-field limit derivation of a system of partial differential equations introduced by Moussa and Sueur in \cite{MoussaSueur} to describe a two dimensional spray modeled by an incompressible fluid and a dispersed phase of solid particles with the following interactions: The fluid particles move through the velocity field $V$ generated by the fluid and the solid particles whereas the solid particles are submitted to a gyroscopic effect related to their velocities and to $V$. We define
\begin{equation}\label{definition_g}
g(x) := -\frac{1}{2\pi}\ln|x|
\end{equation}
as the opposite of the Green kernel on the plane. Let $\omega(t,x)$ be the vorticity of the fluid and $f(t,x,\xi)$ be the density of solid particles, then this system  can be written
\begin{equation}\label{edp_ordre_2}
\left\{
\begin{aligned}
& \partial_t \omega + \div(\omega V) = 0 \\
& \partial_t f + \xi \cdot \nabla_x f + \div_\xi\left((\xi - V)^\bot f\right) = 0 \\
& V = -\nabla^\bot g\ast(\omega + \rho) \\
& \rho(t,x) = \int_{\mathbb{R}^2} f(t,x,\xi) \dd\xi
\end{aligned}\right.
\end{equation}
where $u^\bot := (-u_2,u_1)$ and $\rho$ is the space density of solid particles.

Replacing $\nabla^\bot g$ with a $W^{1,\infty}$ kernel, Moussa and Sueur derived these equations as the mean-field limit of a model describing the dynamics of a finite number of particles moving in an incompressible fluid (see \cite[Corollary 1]{MoussaSueur}). Namely, for $N$ solid particles immersed in a fluid of vorticity $\omega_N(t,x)$ with initial condition $\omega_0$, if the number of particles becomes large and if at time zero their empirical measure $f_N(0)$ is close to a regular density $f_0$, then for any time $t$ $(f_N(t),\omega_N(t))$ is also close to the solution $(f(t),\omega(t))$ of \eqref{edp_ordre_2} starting from $(f_0,\omega_0)$. The system modeling the interaction between a fluid of vorticity $\omega_N$ and $N$ solid particles with positions $q_1,...,q_N$ and velocities $p_1,...,p_N$ is the following:
\begin{equation}
\label{edo}
\left\{
\begin{aligned}
& \partial_t \omega_N + \div(\omega_N V_N) = 0 \\
& \dot q_i = p_i \\
&\dot p_i = p_i^\bot - \nabla g \ast \omega_N(q_i) - \frac{1}{N}\underset{j\neq i}{\sum_{j=1}^N} \nabla g (q_i(t)-q_j(t)) \\
& V_N = -\nabla^\bot g \ast (\omega_N + \rho_N) \\
& \rho_N = \frac{1}{N}\sum_{k=1}^N\delta_{q_i} \\
\end{aligned}\right.
\end{equation} 
where $\displaystyle{\frac{1}{N}}$ represents both the mass of a solid particle and the circulation of velocity around it. This model was established by Glass, Lacave and Sueur in \cite{GlassLacaveSueur} by looking at a rigid body in a fluid and assuming that its size is going to zero. Its well-posedness was studied in \cite{LacaveMiot} by Lacave and Miot. Remark that we can formally obtain this system from System \eqref{edp_ordre_2} if we take 
\begin{equation*}
f = \frac{1}{N}\sum_{i=1}^N \delta_{q_i}\otimes\delta_{p_i}.
\end{equation*}
By Theorem 1.2 of \cite{LacaveMiot}, we know that there exists a unique global weak solution of System \eqref{edo} on $\mathbb{R}_+\times \mathbb{R}^2$.

In this paper we adapt the proof of Duerinckx and Serfaty in \cite{Serfaty} to extend the mean-field convergence result of \cite{MoussaSueur} for the true Coulombian interaction, that is we prove the convergence of \eqref{edo} to \eqref{edp_ordre_2} in the monokinetic regime, or more precisely to the following system:
\begin{equation}
\label{edp}
\left\{
\begin{aligned}
& \partial_t \omega + \div(\omega V) = 0 \\
& \partial_t \rho + \div(\rho v) = 0 \\
& \partial_t v + (v\cdot\nabla) v = (v - V)^\bot \\
& V = -\nabla^\bot g\ast(\omega + \rho).
\end{aligned}\right.
\end{equation}

It can be obtained by taking formally $f(t,x,\xi) = \rho(t,x)\otimes \delta_{\xi=v(t,x)}$ in System \eqref{edp_ordre_2}. A rigorous derivation of System \eqref{edp} from \eqref{edp_ordre_2} was proved in \cite{MoussaSueur} replacing $\nabla^\bot g$ with a $W^{1,\infty}$ kernel.

Before establishing the mean-field limit, we will justify the local in time existence and uniqueness of strong solutions of System \eqref{edp}. The local well-posedness of Euler-Poisson system (that is the system we get if we take $\omega = 0$ and add a pressure term in the equation on $v$) was studied in \cite{Makino} in the case $d=3$ using the usual estimates on hyperbolic systems that were proved in \cite{Kato}. In Section \ref{section:2} we extend this result to System \eqref{edp}. We will not study the existence of weak solutions of our system, for more details on this subject one can refer to the bibliography of the appendix of \cite{Serfaty}. 

Mean-field limits for regular kernels were first established by compactness arguments in \cite{BraunHepp,NeuzertWick} or by optimal transport theory and Wasserstein distances by Dobrushin in \cite{Dobrushin}. The latest method is the one used in \cite{MoussaSueur} to prove the mean-field convergence of \eqref{edo} to \eqref{edp_ordre_2}. In the Coulomb case, the kernel is no longer regular and their proof no longer holds. However, there are other works which prove mean-field limits for some systems with Coulombian or Riesz interactions. Let us consider $N$ particles $x_1,...,x_N$ satisfying the differential equations:
\begin{equation}
\label{edo_generique_ordre_1}
\dot x_i = \frac{1}{N}\underset{j\neq i}{\sum_{j=1}^N}K(x_i - x_j).
\end{equation}
Then the mean-field limit to a density $\mu$ satisfying
\begin{equation}
\label{edp_generique_ordre_1}
\partial_t \mu + \div((K\ast\mu)\mu) = 0
\end{equation}
has been rigorously justified in different cases:

Schochet proved in \cite{Schochet1} the mean-field limit of the point vortex system (that is $\displaystyle{K = \frac{1}{2\pi}\frac{x^\bot}{|x|^2}}$ in dimension two) to a measure-valued solution of the Euler equations up to a subsequence, using arguments previously developed in \cite{Schochet2} and \cite{Delort} to prove existence of such solutions. This result was extended later to include convergence to vortex sheets in \cite{LiuXin}.

For sub-coulombic interactions, that is $|K(x)|, |x||\nabla K(x)| \leq C|x|^{-\alpha}$ with $0 < \alpha < d-1$, the mean-field limit was proved by Hauray in \cite{Hauray} assuming $\div(K) = 0$ and using a Dobruschin-type approach. It was also used by Carillo, Choi and Hauray to deal with the mean-field limit of some aggregation models in \cite{CarrilloChoiHauray,CarrilloChoiHauraySalem}.

In \cite{Duerinckx} Duerinckx gave another proof of the mean-field limit of several Riesz interaction gradient flows using a "modulated energy" that was introduced by Serfaty in \cite{Serfaty2017}. Together they also established the mean-field limit of Ginzburg-Landau vortices with pinning and forcing effects in \cite{DuerinckxSerfaty}.

In \cite{Serfaty}, Serfaty proved the mean-field convergence of such systems where $K$ was a kernel given by Coulomb, logarithmic or Riesz interaction, that is $K = \nabla g$ for $g(x) = |x|^{-s}$ with $\max(d-2,0) \leq s < d$ for $d \geq 1$ or $g(x) = -\ln|x|$ for $d = 1$ or $2$. For this purpose $K\ast\mu$ is supposed to be Lipschitz.

Rosenzweig proved in \cite{Rosenzweig} the mean-field convergence of the point vortex system without assuming Lipschitz regularity of the limit velocity field, using the same energy as in \cite{Serfaty} with refined estimates. Remark that it ensures that the point vortex system converges to any Yudovich solutions of the Euler equations (see \cite{Yudovich}). This result was extended later for higher dimensional systems ($d \geq 3$) in \cite{Rosenzweig4} by the same author.

Numerous mean-field limit results were proved for interacting particles with noise with regular or singular interaction kernels in \cite{BermanOnheim,BolleyChafaiJoaquin,BreschJabinWang,BreschJabinWang2,CarilloFerreiraPrecioso,FournierHaurayMischler,LiLiuYu,Osada}.

For systems of order two satisfying Newton's second law:
\begin{equation}
\label{edo_generique_ordre_2}
\ddot x_i = \frac{1}{N}\underset{j \neq i}{\sum_{j=1}^N} K(x_i - x_j)
\end{equation}
the mean-field convergence to Vlasov-like equations remains open in the Coulombian case but was established for some singular kernels:

In \cite{HaurayJabin1,HaurayJabin2}, Hauray and Jabin treated the case of some sub-coulombian interactions, or more precisely they considered a kernel $K = \nabla g$ where $|\nabla g(x)| \leq C|x|^{-s}$ and $|\nabla g(x)| \leq C|x|^{-s-1}$ where $0 < s < 1$. For this purpose they used the same kind of arguments Hauray used in \cite{Hauray}.

In \cite{JabinWang1,JabinWang2}, Jabin and Wang treated the case of bounded and $W^{-1,\infty}$ gradients.

In \cite{BoersPickl,Lazarovici,LazaroviciPickl,HuangLiuPickl} the same kind of results is proved with some cutoff of the interaction kernel.

In the appendix of \cite{Serfaty}, Duerinckx and Serfaty treated the case of particles with Coulombian interactions converging to the Vlasov equations in the monokinetic regime, that is the pressureless Euler-Poisson equations. This was used later by Carillo and Choi in \cite{CarilloChoi} to prove the mean-field limit of some swarming models with alignment interactions.

In \cite{HanKwanIacobelli}, Han-Kwan and Iacobelli proved the mean-field limit of particles satisfying Newton's second law to the Euler equations in a quasineutral regime or in a gyrokinetic limit. This result was improved later by Rosenzweig in \cite{Rosenzweig2} who treated the case of quasineutral regime for a larger choice of scaling between the number of particles and the coupling constant.

For a general introduction to the subject of mean-field limits one can have a look at the reviews \cite{Golse,Jabin}.

\subsection{Main results}

If $\nu$ is a probability measure on $\mathbb{R}^2$, we will denote
\begin{equation*}
\nu^{\otimes 2} := \nu \otimes \nu.
\end{equation*}
Recall that $g$ is the opposite of the Green kernel on the plane:
\begin{equation*}
g(x) := -\frac{1}{2\pi}\ln|x|.
\end{equation*}
$\Delta$ will denote the diagonal of $(\mathbb{R}^2)^2$:
\begin{equation*}
\Delta := \{(x,x) \; ; \; x \in \mathbb{R}^2\}.
\end{equation*}
The main result in this paper is Theorem \ref{theoreme_champ_moyen} which proves the mean-field limit of solutions of System \eqref{edo} to solutions of \eqref{edp} with some regularity assumptions. We will use the following definition of weak solutions:
\begin{definition}
\label{def_sol_faible_edp}
We say that $(\rho,\omega,v)$ is a weak solution of \eqref{edp} if
\begin{enumerate}
\item $\rho,\omega \in \mathcal{C}([0,T],L^1\cap L^\infty(\mathbb{R}^2,\mathbb{R}))$ with compact supports.
\item For all $t \in [0,T]$, $\displaystyle{\int_{\mathbb{R}^2}\rho(t) = \int_{\mathbb{R}^2} \omega(t) = 1}$.   
\item $v \in W^{1,\infty}([0,T]\times\mathbb{R}^2,\mathbb{R}^2)$
\item The equation on the velocity is satisfied almost everywhere and the continuity equations are satisfied in the sense of distributions, that is for every $\phi \in W^{1,\infty}([0,T],\mathcal{C}^1_c(\mathbb{R}^2))$ and for every $t \in [0,T]$, we have:
\begin{equation}\label{def_solution_faible_eq_continuite}
\begin{aligned}
\int_{\mathbb{R}^2}(\rho(t)\phi(t) - \rho_0\phi(0))&= \int_0^t\int_{\mathbb{R}^2} \rho(s,x)(\partial_s \phi + \nabla \phi \cdot v)(s,x)\dd x \dd s \\
\int_{\mathbb{R}^2}(\omega(t)\phi(t) - \omega_0\phi(0)) &= \int_0^t\int_{\mathbb{R}^2} \omega(s,x)(\partial_s \phi + \nabla \phi \cdot V)(s,x)\dd x \dd s
\end{aligned}
\end{equation}
\end{enumerate}
\end{definition}
Remark that by conservation of mass it is enough to ask 
\begin{equation*}
\int_{\mathbb{R}^2}\rho_0 = \int_{\mathbb{R}^2} 
\omega_0 = 1
\end{equation*}
to get Assumption $(2)$.

In Section \ref{section:2} we will prove existence and uniqueness of solutions of \eqref{edp} in a space strictly included in $\mathcal{C}([0,T],L^1\cap L^\infty)^2\times W^{1,\infty}$ (see Theorem \ref{main_theorem_well_posedness}). For the microscopic system \eqref{edo}, we will use the following definition of weak solutions, introduced in \cite{LacaveMiot}:
\begin{definition}
\label{def_solution_faible_edo}
$(\omega_N,Q_N,P_N)$ is a weak solution of \eqref{edo} on $[0,T]$ if 
\begin{enumerate}
\item $\omega_N \in L^\infty([0,T],L^1\cap L^\infty)\cap \mathcal{C}([0,T],L^\infty-w^*)$ with compact support.
\item For all $t \in [0,T]$, $\displaystyle{\int_{\mathbb{R}^2}\omega_N(t) = 1}$.
\item $q_1,...,q_N \in \mathcal{C}^2([0,T],\mathbb{R}^2)$
\item The partial differential equation on $\omega_N$ is satisfied in the sense of distributions (which means that it also verifies \eqref{def_solution_faible_eq_continuite}) and the ordinary differential equations are satisfied in the classical sense.
\end{enumerate}
\end{definition}
Remark that by conservation of mass it is enough to ask 
\begin{equation*}
\int_{\mathbb{R}^2} 
\omega_{N}(0) = 1
\end{equation*}
to get Assumption $(2)$.

\begin{remark}\label{solution_edo_bien_definie}
By Theorems 1.4 and 1.5 of \cite{LacaveMiot} we know that for $\omega_N(0) \in L^\infty(\mathbb{R}^2)$ compactly supported and $q_1(0),...,q_N(0)$ distinct outside of the support of $\omega_N(0)$ there exists a unique weak solution of \eqref{edo} on $[0,T]$ for any $T > 0$ and no collision between the solid particles occurs in finite time. It follows by \cite[Corollary A.2]{LacaveMiot} that for all $1 \leq p \leq \infty$, $\norm{\omega_N(t)}_{L^p} = \norm{\omega_N(0)}_{L^p}$.
\end{remark}

\begin{remark}
One could replace the compact support assumptions by some logarithmic decrease of the solutions $\omega$ and $\rho$ at infinity as done in \cite{Duerinckx} and \cite{Rosenzweig} but for the sake of simplicity we will only consider solutions with compact support.
\end{remark}

In order to show that the limit of a sequence $(\omega_N,Q_N,P_N)$ of solutions of \eqref{edo} converges to a solution $(\omega,\rho,v)$ of \eqref{edp}, we will control a modulated energy similar to the one defined in \cite{Serfaty}. Let $X_N = (x_1,...,x_N) \in (\mathbb{R}^2)^N$ be such that $x_i \neq x_j$ if $i \neq j$ and let $\mu$ be a $L^1 \cap L^\infty$ probability density with compact support, then the following quantity is well defined:
\begin{equation}
\label{definition_energie_modulee_F}
\mathcal{F}(X_N,\mu) := \int_{\mathbb{R}^2\times\mathbb{R}^2\backslash \Delta} g(x-y)\left(\mu-\sum_{i=1}^N\delta_{x_i}\right)(\dd x)\left(\mu-\sum_{i=1}^N\delta_{x_i}\right)(\dd y).
\end{equation}
This is the "modulated energy"  used in \cite{Serfaty,Rosenzweig} to prove the mean-field limit of \eqref{edo_generique_ordre_1} to \eqref{edp_generique_ordre_1}. As we will see later this quantity controls the distance between $\mu$ and the empirical distribution on $X_N$ in a weak sense. More precisely we have the following proposition proved in \cite{Serfaty} (number 3.6 in the article):
\begin{proposition}[proved in \cite{Serfaty}]\label{coercivite_F}
For any $0 < \theta \leq 1$, there exists $\lambda > 0$ and $C > 0$ such that for $\xi$ smooth and $\mu \in L^\infty$ probability density with compact support,
\begin{multline*}
\left|\int_{\mathbb{R}^2}\xi \left(\mu - \frac{1}{N}\sum_{i=1}^N\delta_{x_i}\right)\right| \leq C\bigg(|\xi|_{\mathcal{C}^{0,\theta}}N^{-\lambda} \\ + \norm{\nabla \xi}_{L^2}\bigg(\mathcal{F}(X_N,\mu) + (1+\norm{\mu}_{L^\infty})N^{-1}+\frac{\ln(N)}{N}\bigg)^\frac{1}{2}\bigg)
\end{multline*}
where 
\begin{align*}
|\xi|_{\mathcal{C}^{0,\theta}} := \underset{x \neq y}{\sup} \frac{|\xi(x)-\xi(y)|}{|x-y|^\theta}.
\end{align*}
\end{proposition}

\begin{remark}
In \cite[Proposition 3.6]{Serfaty} the coercivity inequality is stated with the Hölder norm $\norm{\xi}_{\mathcal{C}^{0,\theta}}$  but by inequality \cite[Inequality (3.27)]{Serfaty} we can replace this Hölder norm by the semi-norm $|\xi|_{\mathcal{C}^{0,\theta}}$.
\end{remark}

We will also need the following functional inequality, proved by Serfaty in \cite{Serfaty} (number 1.1 in the article). 
\begin{proposition}\label{main_functional_inequality_serfaty}
There exists $\lambda, C > 0$ such that for any probability density  $\mu \in L^\infty$ with compact support, $\psi \in W^{1,\infty}(\mathbb{R}^2,\mathbb{R}^2)$ and $X_N \in (\mathbb{R}^2)^N$, we have
\begin{multline*}
\int_{\mathbb{R}^2\times\mathbb{R}^2\backslash \Delta} (\psi(x)-\psi(y))\cdot\nabla g(x-y)\dd\bigg(\frac{1}{N}\sum_{i=1}^N \delta_{x_i} -\mu\bigg)(x)\dd\bigg(\frac{1}{N}\sum_{i=1}^N \delta_{x_i} -\mu\bigg)(y) \\
\leq C\norm{\psi}_{W^{1,\infty}}(\mathcal{F}(X_N,\mu) + (1+\norm{\mu}_{L^\infty})N^{-\lambda}).
\end{multline*}
\end{proposition}
This proposition is one of the main result of \cite{Serfaty} as it is used to perform a Grönwall estimate on the modulated energy from which the mean-field result is deduced.

Now let $\rho,\omega,\omega_N$ be $(L^1 \cap L^\infty)(\mathbb{R}^2,\mathbb{R})$ probability densities with compact supports, $v \in W^{1,\infty}(\mathbb{R}^2,\mathbb{R}^2)$, $Q_N, P_N \in (\mathbb{R}^2)^N$ be such that $q_i \neq q_j$ if $i \neq j$. We define:
\begin{equation*}
\begin{aligned}
\mathcal{H}(\omega,\rho,v,\omega_N,Q_N,P_N)& \\
:=& \frac{1}{N}\sum_{i=1}^N |v(q_i) - p_i|^2  \\
&+ \iint_{(\mathbb{R}^2\times\mathbb{R}^2)\backslash \Delta} g(x-y)(\rho + \omega - \rho_N - \omega_N)^{\otimes 2}(\dd x\dd y) \\
&+ \iint_{\mathbb{R}^2\times\mathbb{R}^2} g(x-y)(\omega-\omega_N)(x)(\omega-\omega_N)(y)\dd x \dd y \\
&+ \norm{\omega - \omega_N}^2_{L^2} + B N^{-\gamma}
\end{aligned}
\end{equation*}
where 
\begin{equation*}
\rho_N := \frac{1}{N}\sum_{i=1}^N \delta_{q_i}
\end{equation*}
and $\gamma$ and $B$ are constants ensuring that $\mathcal{H}$ is nonnegative, as explained in the following result:
\begin{proposition}\label{asmptotic_positivity_energy}
For any $0 < \gamma < 1$, there exists a constant $B$ depending only on  $\gamma$, $\norm{\omega}_{L^1 \cap L^\infty}$, $\norm{\rho}_{L^1 \cap L^\infty}$ and $\underset{N}{\sup} \norm{\omega_N}_{L^1 \cap L^\infty}$ such that:
\begin{equation}\label{lower_bound_potential_energy}
\iint_{(\mathbb{R}^2\times\mathbb{R}^2)\backslash \Delta} g(x-y)(\rho + \omega - \rho_N - \omega_N)^{\otimes 2}(\dd x\dd y) + B N^{-\gamma} \geq 0.
\end{equation}
and  
\begin{equation*}
\mathcal{H}(\omega,\rho,v,\omega_N,Q_N,P_N) \geq 0.
\end{equation*}
\end{proposition}

Remark that if we remove $B N^{-\gamma}$ and if we set $\omega_N = \omega = 0$ our quantity $\mathcal{H}$ is the functionnal used by Duerinckx in the appendix of \cite{Serfaty} to prove the mean-field limit of particles satisfying \eqref{edo_generique_ordre_2} to the Euler-Poisson equations.

Our main result is the following theorem:
\begin{theorem}
\label{theoreme_champ_moyen}
Let $(\rho,\omega,v)$ be a weak solution of System \eqref{edp} in the sense of Definition \ref{def_sol_faible_edp} and $(\omega_N,Q_N,P_N)$ be a weak solution of System \eqref{edo} in the sense of Definition \ref{def_solution_faible_edo}. Then we define
\begin{equation}
\label{definition_H_N}
\mathcal{H}_N(t) := \mathcal{H}(\omega(t),\rho(t),v(t),\omega_N(t),Q_N(t),P_N(t)).
\end{equation}
Suppose that $\nabla \omega \in L^\infty$, $\nabla v \in \mathcal{C}^{0}([0,T]\times\mathbb{R}^2,\mathbb{R}^2)$ and that
\begin{align}
\underset{N \in \mathbb{N}}{\sup}\norm{\omega_N^0}_{L^1 \cap L^\infty} &< +\infty \label{controle_uniforme_omega_N_0}\\ 
q_1(0),...,q_N(0) &\notin \supp(\omega_N^0) \label{points_hors_du_support} \\
\forall i \neq j, q_i(0) \neq q_j(0). \label{points_distincts}
\end{align}
Then there exist positive constants C and $\beta$ depending only on $T,\rho,\omega$ and $\norm{\omega_N}_{L^\infty}$ such that for all $t \in [0,T]$,
\begin{equation}
\label{controle_energie_modulee}
\mathcal{H}_N(t) \leq C(\mathcal{H}_N(0) + N^{-\beta}).
\end{equation}
\end{theorem}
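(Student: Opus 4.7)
The plan is to establish a Grönwall estimate $\frac{d}{dt}\mathcal{H}_N(t) \leq C\,\mathcal{H}_N(t) + C N^{-\beta}$ and conclude, following the modulated-energy method of Duerinckx and Serfaty. I decompose $\mathcal{H}_N$ into the kinetic defect $K_N(t) = \frac{1}{N}\sum_i |v(q_i)-p_i|^2$, the modulated potential $M_N(t) = \int_{(\mathbb{R}^2)^2\setminus \Delta} g(x-y)\,(\mu-\mu_N)^{\otimes 2}$ with $\mu = \omega+\rho$ and $\mu_N = \omega_N+\rho_N$, the $L^2$ defect $L_N(t) = \norm{\omega-\omega_N}_{L^2}^2$, and the corrections $\frac{B_N}{N}+\frac{\ln(N)}{2N}$, then differentiate each in time. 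The corrections will absorb the self-interaction errors coming from renormalization at the particle locations.

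For $K_N$, I will rewrite the particle equation as $\dot p_i = (p_i - V_N^{(i)}(q_i))^\bot$ with $V_N^{(i)} = -\nabla^\bot g * (\omega_N + \rho_N^{(i)})$ and $\rho_N^{(i)} = \frac{1}{N}\sum_{j\neq i}\delta_{q_j}$, and combine with $\partial_t v + (v\cdot\nabla)v = (v-V)^\bot$ to obtain
\[
\tfrac{d}{dt}(v(q_i)-p_i) = \bigl((p_i-v(q_i))\cdot \nabla\bigr)v(q_i) + (v(q_i)-p_i)^\bot - \nabla\psi^{(i)}(q_i),
\]
where $\psi^{(i)} = g * (\mu-\mu_N^{(i)})$. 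The gyroscopic term is orthogonal to $v(q_i)-p_i$ and so disappears in the energy computation, yielding $\frac{d}{dt} K_N \leq 2\norm{\nabla v}_\infty K_N - \frac{2}{N}\sum_i (v(q_i)-p_i)\cdot \nabla\psi^{(i)}(q_i)$. Using the continuity equations, $\frac{d}{dt}M_N = 2\int \nabla\psi\cdot J$ with total current $J = \rho v + \omega V - u_N - \omega_N V_N$, $u_N = \frac{1}{N}\sum p_i\delta_{q_i}$. Splitting $J$ isolates four contributions: (i) a transport commutator $\int \nabla\psi\cdot v\,d\chi$ with $\chi = \mu-\mu_N$, bounded by $C\norm{\nabla v}_\infty M_N$ plus $N^{-\beta}$ errors via the stress-tensor estimate of \cite{Serfaty}; (ii) a mixed term $\int \nabla\psi\cdot (V-v)\,d\chi_\omega$ with $\chi_\omega = \omega-\omega_N$, handled by Cauchy-Schwarz together with the Serfaty truncation bound $\norm{\nabla\psi_\eta}_{L^2}^2 \lesssim M_N + $ error; (iii) the discrete pairing $\frac{2}{N}\sum \nabla\psi(q_i)\cdot(v(q_i)-p_i)$; and (iv) the term $-2\int \omega_N\,\nabla\psi\cdot \nabla^\bot\psi$, which vanishes by the pointwise identity $\nabla\psi\cdot \nabla^\bot\psi \equiv 0$. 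After renormalizing $\nabla\psi(q_i) \mapsto \nabla\psi^{(i)}(q_i)$ in (iii), with the self-interaction producing a $\ln(N)/N$ error absorbed by $\frac{B_N}{N}+\frac{\ln(N)}{2N}$, the discrete pairing cancels exactly against the residual from $K_N'$.

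For $L_N$, since $V$ and $V_N$ are divergence-free and $\norm{\omega}_{L^2}$, $\norm{\omega_N}_{L^2}$ are conserved,
\[
\tfrac{d}{dt}L_N = -2\int \omega_N (V_N-V)\cdot\nabla\omega = -2\int (\omega_N-\omega)\,\nabla^\bot \psi \cdot \nabla\omega,
\]
the pure $\omega$ contribution vanishing because $\int \omega\,\nabla^\bot\psi\cdot \nabla\omega = \frac{1}{2}\int \nabla\omega^2 \cdot \nabla^\bot\psi = 0$. Cauchy-Schwarz together with $\norm{\nabla\omega}_\infty < \infty$ and the truncated $L^2$ bound on $\nabla^\bot\psi_\eta$ yields $|L_N'| \leq C L_N^{1/2}(M_N + \text{error})^{1/2} \leq C\mathcal{H}_N + C N^{-\beta}$. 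Summing the three contributions and applying Grönwall produces the claim, with $\beta$ dictated by the optimal choice of the truncation parameter $\eta = N^{-s}$. The main obstacle will be adapting the Serfaty stress-tensor / commutator estimate to the coupled density $\mu = \omega+\rho$ whose two pieces are transported by distinct velocities $V$ and $v$, together with the precise control of the self-interaction corrections at each $q_i$ required for the discrete pairing to cancel between $K_N'$ and $M_N'$ up to $N^{-\beta}$ only.
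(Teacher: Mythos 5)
Your overall strategy tracks the paper's: split $\mathcal{H}_N$ into kinetic, electrostatic, $L^2$ and constant-correction pieces, differentiate, observe the exact cancellation of the gyroscopic and $p_i$-dependent contributions, apply Serfaty's commutator and coercivity estimate (Proposition \ref{coercivite_F}), and close by Gr\"onwall. The bookkeeping in your decomposition of $\frac{d}{dt}M_N$ into (i)--(iv) is correct, and (iv) does vanish pointwise on $\supp\omega_N$.

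There are, however, two genuine gaps. First, the claim that ``the self-interaction produc[es] a $\ln(N)/N$ error absorbed by $\frac{B_N}{N}+\frac{\ln(N)}{2N}$'' misidentifies the mechanism. If one differentiates $M_N$ with the diagonal already excised, the renormalized force $\nabla\psi^{(i)}(q_i)$ arises directly with no residual error: the paper confirms the cancellation against the residual of $K_N'$ is exact, via the identity $T_{1,2}+T_{2,1}+T_{4,3}=0$. The would-be unrenormalized self-interaction is $\tfrac{1}{N}\nabla g(0)$, which is undefined, not logarithmic in $N$. Moreover $T_6=\frac{B_N}{N}+\frac{\ln N}{2N}$ is constant in time — it contributes nothing to the derivative and serves only to make $\mathcal{H}_N\ge 0$ so the Gr\"onwall step closes. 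In fact the paper avoids differentiating the particle--particle piece altogether: the microscopic energy $\mathcal{E}_N=\frac{1}{N}\sum_i|p_i|^2+\iint g\,\alpha_N^{\otimes 2}$ (diagonal removed) is conserved (Prop.~5.1 of \cite{LacaveMiot}), giving $\frac{d}{dt}T_2=-\frac{2}{N}\sum_i p_i\cdot\dot p_i$ for free. Second, the bound on $L_N'$ by ``Cauchy--Schwarz together with the truncated $L^2$ bound on $\nabla^\bot\psi_\eta$'' is not sound as stated: $\nabla\psi\notin L^2$, and after truncating $g$ near the diagonal the identity $V_N-V=\nabla^\bot\psi$ is lost, so you cannot simply substitute $\nabla^\bot\psi_\eta$ for $V_N-V$. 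The paper instead proves the dedicated Lemma \ref{lemme_controle_termes_croises}: rewrite $\int A\cdot(V-V_N)(\omega-\omega_N)$ in dual form as $\int\xi_N\,d(\mu_N-\rho_N)$ with $\xi_N=\nabla g*[A(\omega-\omega_N)]$, then apply Proposition \ref{coercivite_F} to this $\xi_N$ and Calder\'on--Zygmund for its H\"older and $\dot H^1$ norms. This duality argument is the key technical ingredient your sketch omits. Finally, the time differentiations of the continuum pieces and of $\|\omega-\omega_N\|_{L^2}^2$ require regularization (a $g_\eta$ truncation, and a mollification plus the DiPerna--Lions commutator lemma) because the densities are only $L^1\cap L^\infty$; this nontrivial work is compressed in your ``Using the continuity equations.''
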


\begin{remark}
By Sobolev embeddings the solutions of the spray system \eqref{edp} given by Theorem \ref{main_theorem_well_posedness} are also solutions in the sense of Definition \ref{def_sol_faible_edp} that satisfy the hypothesis of Theorem \ref{theoreme_champ_moyen} and thus Theorem \ref{main_theorem_well_posedness} gives the existence of sufficiently regular solutions of System \eqref{edp} that can be approached as mean-field limits of solutions of System \eqref{edo} (even if Theorem \ref{theoreme_champ_moyen} does not recquire solutions to be as regular as the solutions obtained in Theorem \ref{main_theorem_well_posedness}).
\end{remark}

We will also prove a coerciveness result about this energy.
\begin{proposition}
\label{proposition_coerciveness_energy}

Let $Q_N,P_N \in (\mathbb{R}^2)^N$ and let $\omega,\omega_N,\rho \in L^1\cap L^\infty(\mathbb{R}^2,\mathbb{R})$ be probability densities  with compact supports and $v \in W^{2,\infty}(\mathbb{R}^2,\mathbb{R}^2)$. Assume that
\begin{equation}\label{controle_uniforme_omega_N_coerc}
\underset{N \in \mathbb{N}}{\sup}\norm{\omega_N}_{L^\infty} < + \infty.
\end{equation} 
Then there exist positive constants $C$ and $\beta$ such that
\begin{multline}
\label{coerciveness_energy}
\norm{\frac{1}{N}\sum_{i=1}^N \delta_{(q_i,p_i)} - \rho\otimes\delta_{\xi = v(x)}}_{H^{-5}} \leq C(1+\norm{\nabla v}_{W^{1,\infty}}^2) \\ \times(\mathcal{H}(\omega,\rho,v,\omega_N,Q_N,P_N)^\frac{1}{2} + (1+\norm{\rho}_{L^\infty})N^{-\beta})^\frac{1}{2}.
\end{multline}
In particular, if we assume that
\begin{equation*}
\mathcal{H}(\omega,\rho,v,\omega_N,Q_N,P_N) \underset{N \rightarrow \infty}{\longrightarrow} 0
\end{equation*}
then for any $a < -1$,
\begin{align*}
\rho_N &\Tend{N}{+\infty} \rho  \; \; \text{in} \; H ^{a} \\
\omega_N - \omega &\Tend{N}{+\infty} 0 \; \; \text{in} \; L^2\cap \dot H^{-1} \\
\frac{1}{N}\sum_{i=1}^N \delta_{(q_i,p_i)} &\Tend{N}{+\infty} \rho\otimes\delta_{\xi = v(x)} \; \; \text{in} \; H ^{-5}.
\end{align*}
\end{proposition}

\begin{remark} The $H^{-5}$ norm is not optimal, but it is sufficient to justify that $\mathcal{H}_N$ controls the convergence to a monokinetic distribution in a weak sense. 
\end{remark} 

As a consequence we get that if a sequence of solutions $(\omega_N,Q_N,P_N)$ of \eqref{edo} satisfying the hypothesis of Theorem \ref{theoreme_champ_moyen} are such that
\begin{equation*}
\mathcal{H}_N(0) \Tend{N}{+\infty} 0
\end{equation*}
then for any $t \in [0,T]$ we have
\begin{equation*}
\mathcal{H}_N(t) \Tend{N}{+\infty} 0
\end{equation*}
and it follows by Proposition \ref{proposition_coerciveness_energy} that for any $t \in [0,T]$ and $a < -1$,
\begin{align*}
\rho_N(t) &\Tend{N}{+\infty} \rho(t)  \; \; \text{in} \; H ^{a}  \\
\omega_N(t) - \omega(t) &\Tend{N}{+\infty} 0 \; \; \text{in} \; L^2\cap \dot H^{-1} \\
\frac{1}{N}\sum_{i=1}^N \delta_{(q_i(t),p_i(t))} &\Tend{N}{+\infty} \rho(t)\otimes\delta_{\xi = v(t,x)} \; \; \text{in} \; H ^{-5}.
\end{align*}
Since $\displaystyle{\frac{1}{N}\sum_{i=1}^N\delta_{(q_i(t),p_i(t))}}$ is bounded in the dual of continuous bounded functions, we can extract a subsequence which will converge in the weak-$\ast$ topology of signed measures $\mathcal{M}(\mathbb{R}^2\times\mathbb{R}^2)$. Since it necessarily converges to $\rho(t)\otimes \delta_{\xi = v(t,x)}$, by weak-$\ast$ compactness we can deduce that for all $t \in [0,T]$,
\begin{equation*}
\frac{1}{N}\sum_{i=1}^N\delta_{(q_i(t),p_i(t))} \overset{\ast}{\rightharpoonup} \rho(t)\otimes\delta_{\xi = v(t,x)} \; \; \text{in} \; \mathcal{M}(\mathbb{R}^2)
\end{equation*}
and thus we have the mean-field convergence of \eqref{edo} to \eqref{edp}. One can look at \cite[Corollary 1.2]{Rosenzweig} for a more detailed proof of such a compactness argument.

\begin{remark}
If we suppose that $\omega_{N,0} - \omega_0$ converges to 0 in $\dot H^{-1}$ and that $\rho_{N,0}$ converges to $\rho_0$ in the weak-$*$ topology of signed measures, then the convergence of
\begin{equation*}
\int_{(\mathbb{R}^2\times\mathbb{R}^2)\backslash \Delta} g(x-y)(\rho_0 + \omega_0 - \rho_{N,0} - \omega_{N,0})^{\otimes 2}(\dd x\dd y)
\end{equation*}
to zero can be seen as a well-preparedness condition on the initial data, as stated in the following proposition:
\end{remark}

\begin{proposition}\label{proposition_donnees_initiales_bien_preparees}
Let us suppose that $\omega_{N,0}, \omega_0, \rho_0 \in L^2(\mathbb{R}^2,\mathbb{R})$ are probability densities with compact support and that $(q_1^0,...,q_N^0)$  are such that $q_i^0 \neq q_j^0$ if $i \neq j$.  Then if we suppose
\begin{equation}\label{controle_uniforme_omega_N_0_well_prep_init_data}
\underset{N \in \mathbb{N}}{\sup}\norm{\omega_N^0}_{L^1 \cap L^\infty} < +\infty 
\end{equation}
and
\begin{align*}
\omega_{N,0} - \omega_0 & \Tend{N}{+\infty} 0 \; \; \text{in} \; \dot H^{-1}  \\
\rho_{N,0} &\underset{N\longrightarrow +\infty}{\overset{\ast}{\rightharpoonup}} \rho_0 \; \; \text{in} \; \mathcal{M}(\mathbb{R}^2) \\
\frac{1}{N^2}\sum_{1 \leq i \neq j \leq N} g(q_i^0 - q_j^0) &\Tend{N}{+\infty} \iint g(x-y)\rho_0(x)\rho_0(y)\dd x \dd y
\end{align*}
we have
\begin{equation*}
\int_{(\mathbb{R}^2\times\mathbb{R}^2)\backslash \Delta} g(x-y)(\rho_0 + \omega_0 - \rho_{N,0} - \omega_{N,0})^{\otimes 2}(\dd x\dd y) \Tend{N}{+\infty} 0.
\end{equation*}
\end{proposition}

The latter statement strongly relies on the results proved in \cite{Duerinckx}. One could have more details about these well-preparedness assumptions by reading the introduction of \cite{Duerinckx}.

The remainder of this paper is organized as follows. In Section \ref{section:2} we establish local well-posedness of strong solutions of \eqref{edp}. Then in Section \ref{section:3} we provide the proof of Proposition \ref{asmptotic_positivity_energy}, Theorem \ref{theoreme_champ_moyen}, Proposition \ref{proposition_coerciveness_energy} and Proposition \ref{proposition_donnees_initiales_bien_preparees}. Sections \ref{section:2} and \ref{section:3} are independent of each other.

\section{Local Well-Posedness}\label{section:2}

In this section, if $\mu$ is a continuous function defined on $\mathbb{R}^2$ with compact support, we will denote 
\begin{equation*}
R[\mu] := \text{sup} \; \{|x|  \; ;  \; x \in \mathbb{R}^2, \mu(x) \neq 0\}
\end{equation*}
and
\begin{equation*}
R_T[\mu] := \underset{0 \leq t \leq T}{\text{sup}} \; R[\mu(t)]
\end{equation*}
if $\mu$ depends on time. If B is a Banach space and $1 \leq p\leq \infty$, we will denote
\begin{equation*}
L^p_T B := L^p([0,T],B).
\end{equation*}
We will use the same convention for the Hölder spaces $\mathcal{C}^k_T B$ and the Sobolev spaces $W^{k,p}_T B$. Let us also recall that $g$ is the opposite of the Green kernel on the plane defined in \eqref{definition_g}.

C will refer to a constant independent of time and of any other parameter that can change value from one line to another. We will denote $C(A,B)$ for a constant depending only on some quantities A and B.

We want to show that System \eqref{edp} has a unique regular solution on $[0,T]$ for $T$ small enough. In \cite{Makino}, Makino builds such a solution for the following compressible Euler-Poisson system in three dimensions:
\begin{equation*}
\left\{
\begin{aligned}
&\partial_t u + (u\cdot\nabla) u + \nabla p = F\ast\rho \\
&\partial_t \rho + \div(\rho u) = 0
\end{aligned}\right.
\end{equation*} 
where $p$ is a function of $\rho$ and $F := \nabla G$ where $G$ is the Green function on $\mathbb{R}^3$. There are three main differences with our system \eqref{edp}:
\begin{enumerate}

\item We have no pressure term, but we have a gyroscopic effect.
\item We have a continuity equation on $\omega$ that we also need to solve.
\item On the plane $\mathbb{R}^2$, the function $V = -\nabla^\bot g\ast(\rho + \omega)$ is not in $L^2$ except if we assume that $\displaystyle{\int (\rho + \omega) = 0}$.
\end{enumerate}

In order to deal with the third point, we will assume that $v_0 = u_0 + \overline{V}$ where $u_0 \in L^2$ and $\overline{V}$ is a function of $x$ that we will specify later. If we try to find a solution of \eqref{edp} when $v = u + \overline{V}$, we find that $(u,\rho,\omega)$ evolves according to the following equations:
\begin{equation}
\label{euler_poisson_l2}
\left\{
\begin{aligned}
& \partial_t \omega + \div(\omega V) = 0 \\
& \partial_t \rho + \div(\rho v) = 0 \\
& \partial_t u + ((u + \overline{V})\cdot\nabla)u + (u\cdot\nabla) \overline{V} = u^\bot + f \\
& V = -\nabla^\bot g\ast(\omega + \rho) \\
& f = (\overline{V}-V)^\bot - (\overline{V}\cdot\nabla)\overline{V} \\
& v = u + \overline{V}.
\end{aligned}\right.
\end{equation} 

Thus if we choose $\overline{V}$ such that $f \in L^2$, we will find an equation that we expect to have a solution in $L^2$. We can achieve this goal choosing the following value of $\overline{V}$:
\begin{equation*}
\overline{V} := -\left(\int_{\mathbb{R}^2}\omega_0 + \rho_0\right)\nabla^\bot g\ast\chi
\end{equation*}
where $\chi$ is some compactly supported function such that $\displaystyle{\int_{\mathbb{R}^2} \chi = 1}$. We make such a choice because $\displaystyle{\int_{\mathbb{R}^2}\rho}$ and $\displaystyle{\int_{\mathbb{R}^2}\omega}$ are conserved and we will justify later that for $\mu$ compactly supported,
\begin{equation*}
-\nabla^\bot g \ast \mu(x) = \frac{1}{2\pi}\left(\int_{\mathbb{R}^2}\mu\right)\frac{x^\bot}{|x|^2} + \underset{|x|\rightarrow\infty}{\mathcal{O}}(|x|^{-2}).
\end{equation*}
Since we assumed that $\rho$ and $\omega$ have compact support, we are not concerned by the fact that $V$ is not $L^2$ on the whole plane. Remark also that the space
\begin{equation*}
-\left(\int_{\mathbb{R}^2}\omega_0 + \rho_0\right)\nabla^\bot g\ast\chi + L^2
\end{equation*}
does not depend on the choice of $\chi$. Now we are able to write the main theorem of this section:
\begin{theorem}\label{main_theorem_well_posedness}
Let $s$ be an integer such that $s \geq 3$ and $u_0 \in H^{s+1}(\mathbb{R}^2,\mathbb{R}^2)$, $\rho_0,\omega_0 \in H^{s}(\mathbb{R}^2,\mathbb{R})$ such that $\omega_0$ and $\rho_0$ have compact support, then if $T$ is small enough (with respect to some quantity depending only on $\norm{\omega_0}_{H^s}$, $\norm{\rho_0}_{H^s}$, $\norm{u_0}_{H^{s+1}}$, $R[\omega_0]$ and $R[\rho_0]$), there exists a unique $(u,\omega,\rho)$  with $(\rho,\omega) \in (C_T H^s \cap C^1_T H^{s-1})^2$ and $u \in C_T H^{s+1} \cap C^1_T H^s$ solution of \eqref{euler_poisson_l2}.
\end{theorem}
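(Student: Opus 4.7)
The plan is to run a Picard iteration on the reformulated system \eqref{euler_poisson_l2}. For parameters $M, R > 0$ to be chosen later, introduce the complete metric space
\begin{equation*}
X_T^{M,R} = \bigl\{(u,\omega,\rho) \in \mathcal{C}_T H^{s+1} \times (\mathcal{C}_T H^s)^2 : (u,\omega,\rho)|_{t=0} = (u_0,\omega_0,\rho_0),\ \norm{u}_{\mathcal{C}_T H^{s+1}} + \norm{\omega}_{\mathcal{C}_T H^s} + \norm{\rho}_{\mathcal{C}_T H^s} \leq M,\ \supp \omega(t)\cup\supp \rho(t)\subset B(0,R),\ \int\omega(t) = \int\omega_0,\ \int\rho(t) = \int\rho_0 \bigr\}.
\end{equation*}
Given $(\tilde u,\tilde \omega,\tilde \rho)\in X_T^{M,R}$, I freeze the coefficients by setting $\tilde V = -\nabla^\bot g*(\tilde\omega+\tilde\rho)$, $\tilde v = \tilde u + \overline{V}$, and $\tilde f = (\overline{V} - \tilde V)^\bot - (\overline{V}\cdot\nabla)\overline{V}$, and then solve successively the three linear scalar problems extracted from \eqref{euler_poisson_l2}: the transport equation $\partial_t\omega + \tilde V\cdot\nabla\omega = 0$ (with $\div\tilde V = 0$), the continuity equation $\partial_t\rho + \div(\rho\tilde v) = 0$, and the linear transport problem $\partial_t u + (\tilde v\cdot\nabla) u + (u\cdot\nabla)\overline{V} = u^\bot + \tilde f$. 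These define a map $\Phi:(\tilde u,\tilde\omega,\tilde\rho)\mapsto (u,\omega,\rho)$ whose fixed point is the desired solution.

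Each linear problem is solved by the method of characteristics, with Sobolev estimates following from Kato-Ponce commutator estimates. The Calderón-Zygmund theory gives $\norm{\nabla\tilde V}_{H^s}\leq C\norm{\tilde\omega+\tilde\rho}_{H^s}\leq CM$, so the transport and continuity $H^s$ energy bounds for $\omega$ and $\rho$ close directly. The key observation for the $u$-equation is that the source $\tilde f$ lies in $H^{s+1}$: by the specific choice of $\overline{V}$ combined with mass preservation along the characteristics,
\begin{equation*}
\overline{V} - \tilde V = -\nabla^\bot g * \Bigl((\tilde\omega+\tilde\rho) - \Bigl(\int(\tilde\omega+\tilde\rho)\Bigr)\chi\Bigr),
\end{equation*}
a convolution of $\nabla^\bot g$ with a compactly supported zero-mean function, which decays like $|x|^{-2}$ at infinity and hence sits in $L^2 \cap H^{s+1}$ with norm controlled by $M$. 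Support propagation reads $R[\omega(t)], R[\rho(t)] \leq \max(R[\omega_0],R[\rho_0]) + tC(M)$. Choosing first $M$ large relative to the initial norms, then $R$ large relative to the initial supports, and finally $T$ small relative to both, makes $\Phi$ send $X_T^{M,R}$ into itself; the time-derivative regularities $(\omega,\rho) \in \mathcal{C}^1_T H^{s-1}$ and $u \in \mathcal{C}^1_T H^s$ are then read off directly from the equations.

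Contraction is obtained by performing the same kind of energy estimates on differences of iterates but in a lower-regularity norm, typically $\mathcal{C}_T L^2$ for $\omega,\rho$ and $\mathcal{C}_T H^1$ for $u$, exploiting $\norm{\tilde V_1-\tilde V_2}_{L^2(B(0,R'))}\leq C\norm{(\tilde\omega_1+\tilde\rho_1)-(\tilde\omega_2+\tilde\rho_2)}_{H^{-1}}$ on any fixed ball. For $T$ sufficiently small this yields a strict contraction and hence a unique fixed point, which by the higher-regularity bounds above lies in the advertised class. The main obstacle I anticipate is the careful calibration of regularity levels around the $(u\cdot\nabla)u$ term: working with $u$ at level $s+1$ and $\omega,\rho$ at level $s$ means the commutator $[\partial^\alpha,\tilde v\cdot\nabla]u$ for $|\alpha|=s+1$ must be absorbed exactly using the $H^{s+1}$ regularity of $\tilde u$ and the one-derivative gain of Biot-Savart on $\overline{V}$. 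A secondary subtlety is the non-$L^2$ global behavior of $V$, which the splitting $V = \overline{V} + (V-\overline{V})$ together with the zero-mean cancellation above is specifically designed to circumvent.
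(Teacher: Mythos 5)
Your proposal is correct, but it differs from the paper's proof in its architecture. The paper does \emph{not} freeze $u$: it fixes $(\omega,\rho)$ in a space $X_T$ (which carries Lipschitz-in-time conditions on $(\omega,\rho)$ in $H^{s-1}$, plus uniform $H^s$ bounds, support bounds, and conservation of $\int(\rho+\omega)$), then solves the pressureless Euler equation \eqref{euler_equation_V_bar} for $u$ \emph{nonlinearly} (this step is itself a fixed point, Theorem \ref{well_posedness_presureless_euler}), and finally solves the continuity equations to produce $(\widetilde\omega,\widetilde\rho)$. The outer fixed point acts only on $(\omega,\rho)$ and is a contraction for the $\mathcal{C}_T L^2$ norm; the Lipschitz dependence of $u$ on the data $(\omega,\rho)$ needed for the contraction is extracted from a separate stability estimate on the Euler step. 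By contrast, you freeze all three components $(\tilde u,\tilde\omega,\tilde\rho)$, solve three scalar \emph{linear} problems, and run a single joint Picard iteration, with the contraction metric $\mathcal{C}_T L^2$ for $(\omega,\rho)$ and $\mathcal{C}_T H^1$ for $u$. Both schemes use the same analytic inputs — Kato's linear symmetric-hyperbolic theory with coefficients in $H^{s+1}_{\rm ul}$, Calderón–Zygmund for $\nabla V$, the zero-mean compensation $\overline{V}-V\in L^2\cap H^{s+1}$ built into the choice of $\overline{V}$, and finite speed of propagation for the support bound. What the nested approach buys is a clean separation between the nonlinear Euler structure (where $u$ appears in its own transport field) and the transport of $(\omega,\rho)$, at the cost of a stability estimate for a nonlinear sub-problem; what your single-level iteration buys is conceptual simplicity and only linear sub-problems, at the cost of carrying $u$ in the ambient space and calibrating a mixed contraction metric. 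Two points you gloss over that deserve a remark: (i) completeness of $X_T^{M,R}$ must be verified (the paper does this for $X_T$ via weak compactness in $H^s$ and lower semicontinuity), and (ii) your $H^1$-contraction for the $u$-difference does close only because $s\geq 3$ guarantees $\nabla^2 u\in L^\infty$ — worth flagging as the place where the regularity threshold is actually used.
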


The proof of Theorem \ref{main_theorem_well_posedness} proceeds as follows:
\begin{enumerate}

\item We fix $T > 0$ and define
\begin{align*}
R_0 &:= R[\rho_0 + \omega_0] \\
M_0 &:= \max(\norm{\rho_0}_{H^s},\norm{\omega_0}_{H^s},\norm{u_0}_{H^{s+1}})
\end{align*}
\begin{equation}\label{definition_X_T}
\begin{aligned}
X_T := \; \bigg\{ &(\omega,\rho) \in  L^\infty_T H^s \cap C_T H^{s-1} \bigg | \omega(0) = \omega_0, \rho(0) = \rho_0, \\
&\norm{\rho}_{L^\infty_T H^s} \leq 2M_0, \norm{\omega}_{L^\infty_T H^s} \leq 2M_0, R_T[\rho + \omega] \leq 2 R_0, \\
&\forall t \in [0,T], \int(\rho(t)+\omega(t)) = \int (\rho_0+\omega_0), \\
&\forall t,t' \in [0,T], \norm{\rho(t) - \rho(t')}_{H^{s-1}} \leq L|t-t'|, \\ 
&\norm{\omega(t) - \omega(t')}_{H^{s-1}} \leq L|t-t'|, \bigg\}
\end{aligned}
\end{equation}
where $L > 0$ is a quantity depending only on $R_0$ and $M_0$. Remark that $X_T$ is a subspace of $(C_T H^s \cap C^1_T H^{s-1})^2$. Then we fix $(\omega,\rho) \in X_T$ and we define
\begin{align}
V &:= -\nabla^\bot g\ast(\rho+\omega)\label{definition_V} \\
\overline{V} &:= -\left(\int_{\mathbb{R}^2}\omega_0 + \rho_0\right)\nabla^\bot g\ast\chi\label{definition_V_bar} \\
f &:= (\overline{V}-V)^\bot - \overline{V}\cdot\nabla\overline{V}.\label{definition_f}
\end{align}
Note that we will prove in Subsection \ref{subsection_BS} that $f \in \mathcal{C}_T H^s\cap L^\infty_T H^{s+1}$.

\item In Subsection \ref{subsection_presureless_euler} we solve in $C_{T_1} H^{s+1} \cap C^1_{T_1} H^s$ the equation
\begin{equation*}
\partial_t u + ((u + \overline{V})\cdot\nabla) u + (u\cdot\nabla)\overline{V} = u^\bot + f
\end{equation*}
for initial condition $u_0$ and $T_1 \leq T$ small enough depending only on $M_0$ and $L$.

\item In Subsection \ref{subsection_continuity_equations} we define $v = u + \overline{V}$ and solve in $(C_{T_1} H^s \cap C^1_{T_1} H^{s-1})^2$ the system
\begin{equation*}
\left\{
\begin{aligned}
& \partial_t \widetilde \omega + \div(\widetilde \omega V) = 0 \\
& \partial_t \widetilde \rho + \div(\widetilde \rho v) = 0.
\end{aligned}\right.
\end{equation*} 

\item In Subsection \ref{subsection_EEP} we apply a fixed-point theorem by showing that the map defined on $X_{T_2}$ by $(\omega,\rho) \longmapsto (\widetilde \omega,\widetilde \rho)$ is a contraction for the $\mathcal{C}_T L^2$ norm if $T_2 \leq T_1$ is small enough, using the estimates proved for the previous equations.
\end{enumerate}

\begin{remark}
$X_T$ is strictly included in $(C_{T_1} H^s \cap C^1_{T_1} H^{s-1})^2$ but since we prove in step $(3)$ that the image of the application $\Phi$ sending $(\omega,\rho)$ to $(\widetilde{\omega},\widetilde{\rho})$ is contained in $(C_{T_1} H^s \cap C^1_{T_1} H^{s-1})^2$ we have the expected regularity for the solutions of our system.
\end{remark}

\begin{remark}
Uniqueness is established in the space $X_T$ which is bigger than $(C_T H^s \cap C^1_T H^{s-1})^2$. It ensures uniqueness for the whole system: If $(\rho_1,\omega_1,u_1)$ and $(\rho_2,\omega_2,u_2)$ are two solutions of \eqref{edp}, then $(\rho_1,\omega_1) = (\rho_2,\omega_2)$ by uniqueness of the fixed point and $u_1 = u_2$ follows by uniqueness of solutions of Equation \eqref{euler_equation_V_bar}. Remark also that using energy estimates one could prove uniqueness in a space of smaller regularity. 
\end{remark}

Before doing these different steps, we give some results about the Biot-Savart kernel $-\nabla^\bot g$ that we will need later. In this section we will use the following definition of uniformly local Sobolev spaces:
\begin{definition}
We define $H^s_{\rm ul}(\mathbb{R}^2)$ as the space of locally $H^s$ functions verifying
\begin{equation*}
\norm{u}_{H^s_{\rm ul}(\mathbb{R}^2)} := \underset{x \in \mathbb{R}^2}{\sup}\norm{u}_{H^s(B(x,1))} < +\infty.
\end{equation*}
\end{definition}
For a more complete introduction to these spaces we refer to Section 2.2 of \cite{Kato}.

\subsection{Properties of the Biot-Savart kernel on the plane} \label{subsection_BS}

In this subsection we prove Proposition \ref{proposition_biot_savart}, which contains several results about the Biot-Savart kernel $-\nabla^\bot g$.
\begin{proposition}\label{proposition_biot_savart}
Let $s \geq 3$ and let $\mu$ be a $H^s$ function on $\mathbb{R}^2$ with compact support. Denote 
\begin{equation*}
V := -\nabla^\bot g \ast \mu.
\end{equation*}
Then we have the following inequalities:
\begin{enumerate}

\item $\displaystyle{V \in H^{s+1}_{\rm{ul}}}$ and $\displaystyle{\norm{V}_{H^{s+1}_{\rm{ul}}} \leq C(1+R[\mu])\norm{\mu}_{H^s}}$.

\item $\displaystyle{\norm{\nabla V}_{H^s} \leq C\norm{\mu}_{H^s}}$.

\item $V \in L^\infty$ and we have the three following bounds:
\begin{align*}
\norm{V}_{L^\infty} &\leq CR[\mu]\norm{\mu}_{L^\infty} \\
\norm{V}_{L^\infty} &\leq CR[\mu]^\frac{1}{3}\norm{\mu}_{H^1} \\
\norm{V}_{L^\infty} &\leq C\norm{\mu}_{L^1}^\frac{1}{2}\norm{\mu}_{L^\infty}^\frac{1}{2}.
\end{align*}

\item $\displaystyle{\norm{(V\cdot\nabla) V}_{H^s} \leq C(1+R[\mu])\norm{\mu}_{H^s}^2}$.

\item $\displaystyle{V(x) = \frac{1}{2\pi}\bigg(\int_{\mathbb{R}^2} \mu\bigg)\frac{x^\bot}{|x|^2} + \underset{|x|\rightarrow\infty}{\mathcal{O}}(|x|^{-2})}$.

\item If $\mu$ has mean zero, then $V \in L^2$ and
\begin{equation*}
\norm{V}_{L^2} \leq C(R[\mu]+R[\mu]^3)^\frac{1}{2}\norm{\mu}_{L^2}.
\end{equation*}

\end{enumerate}

\end{proposition}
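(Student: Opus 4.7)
The plan is to organize the six estimates around the explicit formula
\begin{equation*}
V(x)=\frac{1}{2\pi}\int_{\mathbb{R}^2}\frac{(x-y)^\bot}{|x-y|^2}\,\mu(y)\,\dd y,
\end{equation*}
whose kernel $K(z):=z^\bot/|z|^2$ is homogeneous of degree $-1$, so that $\nabla K$ is a matrix of classical Calder\'on--Zygmund kernels. Item (2) is the cleanest: for any $|\alpha|\leq s$, $\partial^\alpha\nabla V$ is one of these $L^2$-bounded singular integral operators applied to $\partial^\alpha\mu$, hence $\|\nabla V\|_{H^s}\leq C\|\mu\|_{H^s}$. Item (5) is a first-order Taylor expansion of $K(x-\cdot)$ around $y=0$ valid on $|x|\geq 2R[\mu]$, which gives $K(x-y)=K(x)+\mathcal{O}(|y|/|x|^2)$; integrating against $\mu$ produces the leading term $\tfrac{1}{2\pi}\bigl(\int\mu\bigr)x^\bot/|x|^2$ with remainder $\mathcal{O}(|x|^{-2})$. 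Item (1) then follows by combining (2) with (3): the top-order piece is $\|\nabla V\|_{H^s_{\rm ul}}\leq\|\nabla V\|_{H^s}\leq C\|\mu\|_{H^s}$, while the $L^2_{\rm ul}$ norm of $V$ itself is controlled by $\|V\|_{L^\infty}\leq CR[\mu]\|\mu\|_{L^\infty}\leq CR[\mu]\|\mu\|_{H^s}$ via the Sobolev embedding in dimension two for $s\geq 2$.

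The three bounds in item (3) all stem from the pointwise estimate $|V(x)|\leq C\int|\mu(y)|/|x-y|\,\dd y$. The first comes from restricting the integration to $\supp\mu\subset B(0,R[\mu])$, translating to $B(x,2R[\mu])$, and using $\int_{B(x,2R[\mu])}\dd y/|x-y|=4\pi R[\mu]$. The third is the classical Biot--Savart splitting at radius $r$: bound the inner part by $2\pi r\|\mu\|_{L^\infty}$ and the outer by $\|\mu\|_{L^1}/r$, then optimize in $r$. For the second, I would again split at a suitable radius but estimate the far part by $\|\mu\|_{L^1}\leq CR[\mu]\|\mu\|_{L^2}$ (H\"older on the support) and the near part by H\"older combined with the two-dimensional Sobolev embedding $H^1\hookrightarrow L^p_{\rm loc}$ for large finite $p$, which absorbs the $1/|x-y|$ singularity; a rebalancing of the split radius in terms of $R[\mu]$ then produces the $\sqrt{R[\mu]}$ factor.

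For item (4) I would use a Moser-type product estimate in the form
\begin{equation*}
\|V\nabla V\|_{H^s}\leq C\bigl(\|V\|_{L^\infty}\|\nabla V\|_{H^s}+\|\nabla V\|_{L^\infty}\|V\|_{H^s_{\rm ul}}\bigr)
\end{equation*}
together with items (1)--(3) and the Sobolev embedding $\|\nabla V\|_{L^\infty}\leq C\|\nabla V\|_{H^s}$. Item (6) exploits the mean-zero hypothesis to subtract the leading term: since $\int\mu=0$, $V(x)=-\int(\nabla^\bot g(x-y)-\nabla^\bot g(x))\mu(y)\,\dd y$, so on $|x|\geq 2R[\mu]$ a Taylor expansion yields $|V(x)|\leq CR[\mu]^2|x|^{-2}\|\mu\|_{L^2}$, which is square-integrable at infinity; on $B(0,2R[\mu])$ one applies Young's inequality with the truncated kernel $K\mathbf{1}_{B(0,4R[\mu])}\in L^1$ of norm $\lesssim R[\mu]$, and summing the two contributions gives the claim. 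The main obstacle I expect is item (4), because $V$ is only in $H^{s+1}_{\rm ul}$ and not in $H^{s+1}$ globally (it decays only like $|x|^{-1}$), so a standard global Moser estimate does not apply verbatim; the point is that $\nabla V$ does decay like $|x|^{-2}$ (by differentiating item (5)), giving just enough integrability at infinity to make $V\nabla V$ a globally $H^s$ function and to justify the form of the product inequality above.
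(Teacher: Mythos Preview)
Your proposal is correct. The overall architecture---item (2) by Calder\'on--Zygmund, item (1) by combining (2) and (3), item (4) by a product/Leibniz estimate exploiting that any derivative falling on $V$ produces a factor of $\nabla V\in H^s$---matches the paper. The differences are in items (5) and (6), and in the second bound of (3).

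For item (5) the paper does not Taylor-expand the kernel; instead it sets $W=V_1-iV_2$, observes that $(\partial_x+i\partial_y)W=-i\mu$ so that $W$ is holomorphic outside $B(0,R[\mu])$, writes a Laurent series, and computes the coefficient $a_1$ by a contour integral (Stokes' theorem gives $a_1=-\tfrac{i}{2\pi}\int\mu$). Your Taylor approach is more elementary and dimension-independent; the paper's complex-analytic route is specific to $\mathbb{R}^2$ but feeds directly into item (6). For item (6) the paper integrates by parts: with $\psi=g*\mu$ one has $\int|V|^2=\int\psi\mu\leq\|\psi\|_{L^\infty(\supp\mu)}\|\mu\|_{L^1}$, then bounds $\psi$ on $\supp\mu$ by splitting $g$ near and away from its singularity (the boundary term vanishes thanks to the Laurent expansion). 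Your subtract-and-Taylor plus Young argument is also correct and in fact yields the sharper $\|V\|_{L^2}\leq CR[\mu]\|\mu\|_{L^2}$, which of course implies the stated bound. For the second inequality in item (3) your splitting-plus-$H^1\hookrightarrow L^p$ sketch works but is more laborious than necessary: the paper does it in one line by H\"older with exponents $(4,4/3)$, i.e.\ $\|V\|_{L^\infty}\leq C\|\mu\|_{L^4}\bigl(\int_{B(0,R[\mu])}|y|^{-4/3}\dd y\bigr)^{3/4}\leq C\sqrt{R[\mu]}\|\mu\|_{H^1}$. Finally, for item (4) the Moser inequality you wrote with $\|V\|_{H^s_{\rm ul}}$ on the right is not standard as stated; the paper avoids this by the direct Leibniz case analysis you anticipate in your last paragraph, and that is exactly what makes the argument go through.
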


Estimates $(1)$ to $(4)$ are consequences of the two following propositions. The first one is the usual potential estimate of a velocity field given by the Biot-Savart law: 
\begin{proposition}[Potential estimates in $L^p$]\label{potential_estimates}
If $2 < p < \infty$ and $\omega \in L^1 \cap L^p$, then
\begin{equation*}
\norm{\nabla g*\omega}_{L^\infty} \leq C_p\norm{\omega}_{L^1}^\frac{p-2}{2p-2}\norm{\omega}_{L^p}^\frac{p}{2p-2}
\end{equation*}
\begin{equation*}
\norm{\nabla g*\omega}_{L^\infty} \leq C\norm{\omega}_{L^1}^\frac{1}{2}\norm{\omega}_{L^\infty}^\frac{1}{2}.
\end{equation*}
\end{proposition}
For the proof of this proposition see for example \cite[Lemma 1]{Iftimie}. The second is the Calderón-Zygmund inequality:
\begin{proposition}[Calderón-Zygmund inequality]\label{Calderon_Zygmund}
If $1 < p < +\infty$, 
\begin{equation*}
\norm{\nabla^2 g*\omega}_{L^p} \leq C_p \norm{\omega}_{L^p}.
\end{equation*}
\end{proposition}
For the proof of this inequality we refer to \cite[Proposition 7.5]{BahouriCheminDanchin}.

Claims $(5)$ and $(6)$ giving the behavior of V at infinity are classical results in fluid dynamics (see for example \cite[Proposition 3.3]{MajdaBertozzi}) that we will prove to have the specific $L^2$ bound we need on $V$.

The first consequence of Proposition \ref{proposition_biot_savart} is the following:
\begin{corollary}\label{corollaire_controle_f_V_bar}
Let $\rho_0,\omega_0 \in H^s$ with compact support, $\chi$ be a smooth function with compact support and $(\rho,\omega) \in X_T$ where $X_T$ is the space defined by \eqref{definition_X_T}. Let us consider the functions $\overline{V}$ and $f$ defined by \eqref{definition_V_bar} and \eqref{definition_f}, then we have  $\overline V \in H^{s+2}_{\rm ul}$, $f \in L^\infty_TH^{s+1}\cap \mathcal{C}_T H^s$ and
\begin{align*}
\norm{\overline V}_{H^{s+2}_{\rm ul}} &\leq C(R_0,M_0) \\
\norm{f}_{L^\infty_TH^{s+1}} &\leq C(R_0,M_0).
\end{align*}
\end{corollary}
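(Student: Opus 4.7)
The plan is to isolate the time-independent piece $\overline V \cdot \nabla \overline V$ and the fluctuation piece $(\overline V - V)^\bot$ separately, to apply the various parts of Proposition \ref{proposition_biot_savart} to each, and crucially to exploit the mass conservation built into the definition of $X_T$ to recover enough regularity for $\overline V - V$ despite the slow decay of the Biot--Savart kernel.

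First I would treat $\overline V$ itself. Since $\omega_0,\rho_0 \in H^s$ have compact support of radius $R_0$, Cauchy--Schwarz immediately bounds the scalar prefactor in $\overline V$ by $|\int(\omega_0 + \rho_0)| \leq C R_0 M_0$. Applying Proposition \ref{proposition_biot_savart}(1) to the smooth compactly supported bump $\chi$ one step above the stated regularity yields $\|\overline V\|_{H^{s+2}_{\rm ul}} \leq C(R_0,M_0)$, and applying Proposition \ref{proposition_biot_savart}(4) in the same way gives $\|\overline V \cdot \nabla \overline V\|_{H^{s+1}} \leq C(R_0,M_0)$, handling the time-independent term of $f$.

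For the remaining term $(\overline V - V)^\bot$, I would write $\overline V - V = -\nabla^\bot g * \mu_0$ where
\begin{equation*}
\mu_0(t,\cdot) := \Bigl(\int(\omega_0 + \rho_0)\Bigr)\chi - \rho(t,\cdot) - \omega(t,\cdot),
\end{equation*}
which is supported in the fixed ball of radius $\max(R[\chi], 2R_0)$ and bounded in $H^s$ by $C(R_0,M_0)$. The crucial point is that by the mass conservation $\int(\rho(t)+\omega(t)) = \int(\rho_0+\omega_0)$ enforced in $X_T$, the density $\mu_0(t,\cdot)$ has zero integral for every $t$. Proposition \ref{proposition_biot_savart}(6) then provides the $L^2$ bound on $\overline V - V$, Proposition \ref{proposition_biot_savart}(2) controls $\nabla(\overline V - V)$ in $H^s$, and the combination of the two yields $\overline V - V \in H^{s+1}$ with the required estimate; adding the previous step gives $\|f\|_{L^\infty_T H^{s+1}} \leq C(R_0,M_0)$.

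Finally, for the time continuity $f \in \mathcal{C}_T H^s$, only $V(t)$ depends on $t$. The Lipschitz-in-$t$ bounds on $\rho$ and $\omega$ in $H^{s-1}$ provided by $X_T$, combined again with the mean-zero property (preserved under differences in $t$ thanks to mass conservation), let me rerun the same combination of parts (2) and (6) on $\mu_0(t) - \mu_0(t')$ to conclude that $t \mapsto V(t)$ is Lipschitz into $H^s$, and in particular continuous. The main subtlety throughout is the systematic use of part (6): without the mean-zero cancellation in $\mu_0$, the Biot--Savart convolution would only sit in $L^\infty_{\rm loc}$ with a logarithmic growth at infinity, and the $L^2$-based Sobolev estimates could not close. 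Hard-coding the conservation of $\int(\rho+\omega)$ into $X_T$ is precisely what makes this step work.
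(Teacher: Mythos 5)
Your proof is correct and follows essentially the same route as the paper: isolate $\overline V$ and $(\overline V\cdot\nabla)\overline V$ via parts (1) and (4) of Proposition \ref{proposition_biot_savart} applied to $\chi$, then write $\overline V - V = -\nabla^\bot g*\mu_0$ with $\mu_0$ of zero mean (thanks to the conservation of $\int(\rho+\omega)$ built into $X_T$) and combine part (6) for the $L^2$ piece with part (2)/Calder\'on--Zygmund for the gradient, finally running the same mean-zero argument on time differences to get $\mathcal{C}_T H^s$. The paper's proof does exactly this with $h = -\mu_0$, so no substantive difference.
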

\begin{proof}[Proof of Proposition \ref{proposition_biot_savart}]

Let us begin by the second inequality. We have:
\begin{equation*}
\norm{\nabla V}_{H^{s}} = \norm{\nabla^2 g\ast\mu}_{H^{s}} \leq C\sum_{|\alpha| \leq s} \norm{\nabla^2 g\ast\partial^\alpha\mu}_{L^ 2} \leq C\norm{\mu}_{H^s}
\end{equation*}
by Proposition \ref{Calderon_Zygmund}.

Let us now prove the third Claim. By Proposition \ref{potential_estimates}, we have
\begin{align*}
\norm{V}_{L^\infty} 
&\leq C\norm{\mu}_{L^1}^\frac{1}{2}\norm{\mu}^\frac{1}{2}_{L^\infty} \\
&\leq C\norm{\mu}_{L^\infty}^\frac{1}{2}\left(\int_{B(0,R[\mu])}|\mu|\right)^\frac{1}{2}  \\
&\leq C\norm{\mu}_{L^\infty}\left(\int_{B(0,R[\mu])}1\right)^\frac{1}{2} \\
&\leq CR[\mu]\norm{\mu}_{L^\infty}.
\end{align*}
For the second inequality of $(3)$, we use Proposition \ref{potential_estimates} again to get
\begin{align*}
\norm{V}_{L^\infty} 
&\leq C\norm{\mu}_{L^1}^\frac{1}{3}\norm{\mu}_{L^4}^\frac{2}{3}.
\end{align*}
Moreover, by Cauchy-Schwartz inequality,
\begin{align*}
\norm{\mu}_{L^1} \leq C\norm{\mathbf{1}_{B(0,R[\mu])}}_{L^2}\norm{\mu}_{L^2} \leq C R[\mu]\norm{\mu}_{H^1}.
\end{align*}
and therefore by the embedding of $H^1$ into $L^4$ (see for example \cite[Corollary 9.11]{Brezis}) we have
\begin{align*}
\norm{V}_{L^\infty} \leq CR[\mu]^\frac{1}{3}\norm{\mu}_{H^1}.
\end{align*}
The third inequality of $(3)$ is the second inequality of  Proposition \ref{potential_estimates}.

The first inequality follows from the two Claims we just proved: Since all derivatives of $V$ of order $k$ for $1 \leq k \leq s+1$ belong to $L^2$ and since 
$\norm{V}_{L^2_{\rm{ul}}} \leq C\norm{V}_{L^\infty}$, we get
\begin{align*}
\norm{V}_{H^{s+1}_{\rm{ul}}} 
&\leq C(\norm{\mu}_{H^s} + R[\mu]\norm{\mu}_{L^\infty}) \\
&\leq C(1+R[\mu])\norm{\mu}_{H^s}
\end{align*}
because $H^s \hookrightarrow L^\infty$.

Now let us prove the fourth point. Let $\alpha$ be a multi-index such that $|\alpha| \leq s$, then $\partial^\alpha ((V \cdot\nabla) V)$ is a combination of $(\partial^{\alpha_1}V \cdot\nabla)\partial^{\alpha_2} V$ where $\alpha_1 + \alpha_2 = \alpha$. If $\alpha_1 = 0$,
\begin{align*}
\norm{(\partial^{\alpha_1} V \cdot \nabla)\partial^{\alpha_2} V}_{L^2} &\leq \norm{V}_{L^\infty}\norm{\nabla V}_{H^s}.
\end{align*}

If $1 \leq |\alpha_1| \leq s-1$, then
\begin{align*}
\norm{(\partial^{\alpha_1} V \cdot \nabla)\partial^{\alpha_2} V}_{L^2} &\leq \norm{\partial^{\alpha_1}V}_{L^\infty}\norm{\nabla V}_{H^s} \\
&\leq \norm{\nabla V}_{H^s}^2.
\end{align*}

Finally if $|\alpha_1| = s$,
\begin{align*}
\norm{(\partial^{\alpha_1} V \cdot \nabla)\partial^{\alpha_2} V}_{L^2} &\leq \norm{\partial^{\alpha_1}V}_{L^2}\norm{\nabla V}_{L^\infty} \\
&\leq \norm{\nabla V}_{H^s}^2.
\end{align*}

We conclude using (2) and (3).

We now prove the fifth claim by a standard argument. Let us set
\begin{equation*}
W(x + iy) = V_1(x,y) - iV_2(x,y).
\end{equation*}
Then we have
\begin{align*}
(\partial_x + i \partial_y)W 
&= (\partial_x V_1 + \partial_y V_2) + i(\partial_y V_1 - \partial_x V_2) \\
&= \div(V) - i \curl(V) \\
&= 0 - i\mu.
\end{align*}

Thus $W$ is holomorphic on $\mathbb{C}\backslash B(0,R)$ with $R = R[\mu]$ (since it is a solution of Cauchy-Riemann equations) and we can write it as the sum of a Laurent serie:
\begin{equation*}
W(z) = \sum_{k=-\infty}^{+\infty}a_k z^{-k}.
\end{equation*}
Remark that since we have $W(z) \Tend{z}{\infty} 0$, $a_k = 0$ for $k$ nonpositive. Now we compute $a_1$ by a contour integral in the counter clockwise sense:
\begin{align*}
a_1 &= \frac{1}{2i\pi}\int_{\partial B(0,R)}W(z)\dd z \\
&= \frac{1}{2i\pi}\int_0^{2\pi}W(Re^{i\theta})Rie^{i\theta}\dd\theta \\
&= \frac{1}{2\pi}\int_0^{2\pi} (V_1-iV_2)(R\cos(\theta),R\sin(\theta))(R\cos(\theta)+iR\sin(\theta))\dd\theta \\
&= \frac{1}{2\pi} \int_{\partial B(0,R)}(V\cdot n + i V^\bot \cdot n)\dd\sigma
\end{align*}
$n$ beeing the outer normal vector to $B(0,R)$ (or equivalently the inner normal vector to $B(0,R)^c$) and $\sigma$ its unit measure. Thus by Stokes theorem,
\begin{align*}
a_1 &= \frac{1}{2\pi}\int_{B(0,R)}\div(V) + \frac{i}{2\pi}\int_{B(0,R)}\div(V^\bot) \\
&= -\frac{i}{2\pi}\int_{\mathbb{R}^2}\curl(V) \\
&= -\frac{i}{2\pi}\int_{\mathbb{R}^2}\mu.
\end{align*}
Finally, we get
\begin{align*}
V_1 + i V_2 &= \frac{i}{2\pi}\bigg(\int_{\mathbb{R}^2} \mu\bigg)\frac{1}{\overline z} + O(|z|^{-2})\\
&= \frac{1}{2\pi}\bigg(\int_{\mathbb{R}^2} \mu\bigg)i\frac{z}{|z|^2} + O(|z|^{-2})
\end{align*}
which gives us the fifth claim.

Now let us assume that $\displaystyle{\int_{\mathbb{R}^2} \mu = 0}$ and bound the $L^2$ norm of $V$. Let $\psi = g\ast\mu$, then by the fifth point of the inequality, $W$ is holomorphic in $B(0,R)^c$ and has a holomorphic primitive. Thus we get $\psi(x) = D + O(|x|^{-1})$ and for $A > 0$ big enough,
\begin{align*}
\left|\int_{\partial B(0,A)} V\psi\right| 
&\leq C\norm{V}_{L^\infty(\partial B(0,A))}\norm{\psi}_{L^\infty(\partial B(0,A))}2\pi A  \\
&\leq \frac{CD}{A} \Tend{A}{+\infty} 0.
\end{align*}

This fact allows us to compute the following integral by parts:
\begin{align*}
\int_{\mathbb{R}^2}  |V|^2 &= \int_{\mathbb{R}^2} \nabla \psi \cdot \nabla \psi \\
&= -\int_{\mathbb{R}^2} \psi \Delta \psi \\
&= \int_{\mathbb{R}^2} \psi \mu \\
&\leq \norm{\psi}_{L^\infty(\supp(\mu))}\norm{\mu}_{L^1} \\
&\leq CR[\mu]\norm{\mu}_{L^2}\norm{\psi}_{L^\infty(\supp(\mu))}.
\end{align*}
Now if $x \in B(0,R[\mu])$, we have
\begin{align*}
|\psi(x)| \leq& \; C\bigg(- \int_{B(x,1)\cap B(0,R[\mu])}\ln(|x-y|)|\mu(y)|\dd y \\ 
&+ \int_{B(x,1)^c\cap B(0,R[\mu])}(|x|+|y|)|\mu(y)|\dd y\bigg) \\
\leq& \; C\bigg(- \int_{B(x,1)}\ln(|x-y|)|\mu(y)|\dd y \\ 
&+ \int_{B(0,R[\mu])}(2R[\mu])|\mu(y)|\dd y\bigg) \\
\leq& \; C(1+R[\mu]^2)\norm{\mu}_{L^2}.
\end{align*}
Thus
\begin{align*}
\int |V|^2 &\leq CR[\mu](1+R[\mu]^2)\norm{\mu}^2_{L^2}
\end{align*}
which is the sixth claim of our proposition.
\end{proof}

Now we prove the uniform bounds we need on $f$ and $\overline{V}$:
\begin{proof}[Proof of Corollary \ref{corollaire_controle_f_V_bar}]
First remark that
\begin{align*}
\norm{\overline V}_{H^{s+2}_{\rm ul}}
&= \left|\int_{\mathbb{R}^2} \rho_0 + \omega_0\right|\norm{\nabla g \ast \chi}_{H^{s+2}_{\rm ul}} \\
&\leq C\left|\int \rho_0 + \omega_0\right| \\
&\leq C\norm{\rho_0 + \omega_0}_{L^1} \\
&\leq CR[\rho_0+\omega_0]\norm{\rho_0 + \omega_0}_{L^2} \\
&\leq 2CR_0M_0
\end{align*}
by Claims $(1)$ and $(3)$ of Proposition \ref{proposition_biot_savart}. Moreover, if we denote
\begin{equation*} 
h = \rho + \omega - \left(\int_{\mathbb{R}^2}\rho_0 + \omega_0\right)\chi
\end{equation*}
we have
\begin{align*}
\norm{f}_{L^\infty_T H^{s+1}} 
\leq& \norm{(V - \overline{V})^\bot - (\overline{V}\cdot \nabla) \overline{V}}_{L^\infty_T H^{s+1}} \\
\leq& \norm{(V - \overline{V})^\bot}_{L^\infty_TL^2} + \norm{\nabla(V - \overline{V})^\bot}_{L^\infty_T H^s} \\
&+ \norm{(\overline{V}\cdot \nabla) \overline{V}}_{L^\infty_T H^{s+1}} \\
\leq& \norm{\nabla g\ast h}_{L^\infty_TL^2} + \norm{\nabla^2 g\ast h}_{L^\infty_T H^s} + \norm{(\overline{V} \cdot\nabla) \overline{V}}_{L^\infty_TH^{s+1}}  \\
\leq&  C(R_T[h]+R_T[h]^3)^\frac{1}{2}\norm{h}_{L^\infty_TL^2} +  C\norm{h}_{L^\infty_TH^s} \\
&+ C(1+R_T[\chi])\left|\int\rho_0 + \omega_0\right|^2\norm{\chi}^2_{L^\infty_T H^{s+1}} \\
\leq& C(R_0,M_0)
\end{align*}
where we used Claims $(2)$, $(4)$ and $(6)$ of Proposition \ref{proposition_biot_savart}.

Now let us justifiy that $f \in \mathcal{C}_T H^s$. If $t_1,t_2 \in [0,T]$, we have
\begin{align*}
\norm{f(t_1) - f(t_2)}_{H^s}
=& \; \norm{\nabla^\bot g\ast(\rho(t_1) + \omega(t_1) - \rho(t_2) - \omega(t_2))}_{H^s} \\
\leq& \; \norm{\nabla^\bot g\ast(\rho(t_1) + \omega(t_1) - \rho(t_2) - \omega(t_2))}_{L^2} \\
&+ \norm{\nabla^2 g\ast(\rho(t_1) + \omega(t_1) - \rho(t_2) - \omega(t_2))}_{H^{s-1}} \\
\leq& \; C(R_T[\rho+\omega]+R_T[\rho+\omega]^3)^\frac{1}{2} \\
&\times\norm{\rho(t_1) + \omega(t_1) - \rho(t_2) - \omega(t_2)}_{L^2} \\
&+ C\norm{\rho(t_1) + \omega(t_1) - \rho(t_2) - \omega(t_2)}_{H^{s-1}}
\end{align*}
where we used points $(6)$ and $(2)$ of Proposition \ref{proposition_biot_savart} and therefore $f \in\mathcal{C}_T H^s$ follows from $\rho, \omega \in \mathcal{C}_TH^{s-1}$.
\end{proof}

\subsection{Pressureless Euler equations} \label{subsection_presureless_euler}

In this subsection we prove that there is a unique solution to the following equation
\begin{equation}
\label{euler_equation_V_bar}
\partial_t u + ((u + \overline{V})\cdot \nabla) u + (u\cdot\nabla) \overline{V} = u^\bot + f
\end{equation}
where $\overline{V}$ and $f$ are the functions defined in \eqref{definition_V_bar} and \eqref{definition_f}.

Following the idea of \cite{Kato,Makino}, we start by fixing $u \in \mathcal{C}_T H^{s+1}$ and solving the linearized equation:
\begin{equation}
\label{linear_euler_equation_V_bar}
\left\{
\begin{aligned}
& \partial_t \widetilde u + ((u + \overline{V})\cdot \nabla) \widetilde u + (\widetilde u\cdot\nabla) \overline{V} = \widetilde u^\bot + \widetilde f \\
& \widetilde{u}(0) = \widetilde{u}_0.
\end{aligned}\right.
\end{equation}
We have the following well-posedness theorem:
\begin{theorem}\label{well_posedness_linear_presureless_euler}
If $s$ is an integer such that $s \geq 3$, $u \in \mathcal{C}_T H^{s+1}$, $\widetilde u_0 \in H^{s+1}$, $\mu$ with compact support and $\widetilde f \in L^1_T H^{s+1} \cap \mathcal{C}_T H^s$, then \eqref{linear_euler_equation_V_bar} has a solution $\widetilde u \in \mathcal{C}_T H^{s+1} \cap \mathcal{C}^1_T H^s$, unique in the space $\mathcal{C}_T H^1 \cap \mathcal{C}^1_T L^2$. Moreover, we have the following estimates:
\begin{align*}
\norm{\widetilde u(t)}_{H^{s+1}} &\leq e^{CT(\norm{\overline{V}}_ {L^\infty_T H^{s+2}_{\rm{ul}}} + \norm{u}_{L^\infty_T H^{s+1}}+1)} \left(\norm{\widetilde u_0}_{H^{s+1}} + C\norm{\widetilde f}_{L^1_T H^{s+1}}\right) \\
\norm{\partial_t\widetilde u(t)}_{H^s} &\leq C\left(\norm{\widetilde f(t)}_{H^s} + (\norm{u}_{L^\infty_T H^{s+1}}+\norm{\overline{V}}_ {L^\infty_T H^{s+2}_{\rm{ul}}}+1)\norm{\widetilde u(t)}_{H^{s+1}} \right).
\end{align*}
\end{theorem}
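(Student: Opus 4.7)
The plan is to treat \eqref{linear_euler_equation_V_bar} as a linear symmetric hyperbolic system with zero- and first-order lower-order terms (the transport operator $\partial_t + (u+\overline V)\cdot\nabla$ plus the multiplication by $\nabla\overline V$ and the skew rotation $\widetilde u \mapsto -\widetilde u^\bot$) and to apply the classical Kato framework developed in \cite{Kato} and reused in \cite{Makino}. The high-regularity bound will come from an energy estimate in $H^{s+1}$; existence from Friedrichs regularization and a compactness argument; uniqueness from an $L^2$ estimate on the difference of two solutions.

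For the a priori bound I would differentiate the equation by $\partial^\alpha$ for every $|\alpha|\leq s+1$, pair the result with $\partial^\alpha \widetilde u$ in $L^2$, and sum. Two simplifications make the computation clean: first, the gyroscopic term drops out, since $\int \partial^\alpha(\widetilde u^\bot)\cdot\partial^\alpha \widetilde u = \int (\partial^\alpha \widetilde u)^\bot\cdot\partial^\alpha \widetilde u = 0$ pointwise; second, the principal transport term integrates by parts to
\begin{equation*}
\int (u+\overline V)\cdot\nabla\partial^\alpha \widetilde u\cdot\partial^\alpha \widetilde u = -\tfrac{1}{2}\int \div(u+\overline V)\,|\partial^\alpha \widetilde u|^2 \leq \norm{\nabla(u+\overline V)}_{L^\infty}\norm{\widetilde u}_{H^{s+1}}^2.
\end{equation*}
The only genuinely nontrivial term is the commutator $[\partial^\alpha,u+\overline V]\cdot\nabla\widetilde u$, which by the Kato-Ponce inequality is bounded in $L^2$ by $C(\norm{\nabla(u+\overline V)}_{L^\infty}\norm{\widetilde u}_{H^{s+1}} + \norm{u+\overline V}_{H^{s+1}_{\rm ul}}\norm{\nabla\widetilde u}_{L^\infty})$. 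Using $\norm{\nabla\widetilde u}_{L^\infty}\leq C\norm{\widetilde u}_{H^{s+1}}$ (Sobolev in dimension two, valid since $s\geq 3$) and Corollary \ref{corollaire_controle_f_V_bar} to control $\overline V$, together with the analogous tame estimate for $(\widetilde u\cdot\nabla)\overline V$, one obtains
\begin{equation*}
\tfrac{d}{dt}\norm{\widetilde u}_{H^{s+1}} \leq C(\norm{u}_{L^\infty_T H^{s+1}}+\norm{\overline V}_{L^\infty_T H^{s+2}_{\rm ul}}+1)\,\norm{\widetilde u}_{H^{s+1}} + \norm{\widetilde f}_{H^{s+1}},
\end{equation*}
and Gr\"onwall gives the first stated inequality. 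The bound on $\partial_t\widetilde u$ in $H^s$ then follows by reading $\partial_t\widetilde u$ directly off from the equation and applying the tame product estimate once more, with $\widetilde u \in H^{s+1}$ now known.

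For existence I would regularize by a Friedrichs mollifier $J_\epsilon$ and solve
\begin{equation*}
\partial_t \widetilde u_\epsilon + J_\epsilon\bigl((u+\overline V)\cdot\nabla J_\epsilon \widetilde u_\epsilon\bigr) + J_\epsilon\bigl((J_\epsilon \widetilde u_\epsilon\cdot\nabla)\overline V\bigr) = J_\epsilon \widetilde u_\epsilon^\bot + J_\epsilon \widetilde f
\end{equation*}
as a Cauchy problem in the Banach space $H^{s+1}$, where the right-hand side is a bounded linear operator. The same energy argument carries over with $\epsilon$-independent constants to give a uniform $L^\infty_T H^{s+1}$ bound on $\widetilde u_\epsilon$. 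Weak-$*$ extraction produces a limit $\widetilde u\in L^\infty_T H^{s+1}$ solving the equation in the sense of distributions, and strong continuity in time is then recovered by a Bona-Smith-type argument. Uniqueness in $\mathcal{C}_T H^1\cap\mathcal{C}^1_T L^2$ follows from a plain $L^2$ energy estimate on the difference of two solutions: the transport term is integrated by parts, the gyroscopic term cancels pointwise, the zero-order multiplication by $\nabla\overline V$ is controlled by $\norm{\nabla\overline V}_{L^\infty}$, and Gr\"onwall concludes.

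The main technical point throughout is the uniformly local nature of $\overline V$: because $\overline V\in H^{s+2}_{\rm ul}$ but not $H^{s+2}$, each commutator and tame product estimate must be written in terms of the uniformly local norm of $\overline V$ rather than a global Sobolev one. This is precisely what the Kato-style estimates in $H^s_{\rm ul}$ from \cite{Kato} provide and what accounts for the presence of $\norm{\overline V}_{L^\infty_T H^{s+2}_{\rm ul}}$ in the final bound.
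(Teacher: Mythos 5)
The underlying mathematics is the same: you identify \eqref{linear_euler_equation_V_bar} as a linear symmetric hyperbolic system with uniformly-local coefficients and invoke the Kato framework of \cite{Kato}. The difference is one of exposition rather than substance: the paper's proof simply rewrites the equation in the form $\partial_t\widetilde u + \sum_i A_i\partial_i\widetilde u + A_3\widetilde u = \widetilde f$ with $A_1,A_2$ symmetric, checks that each $A_i$ lies in $\mathcal{C}_T L^2_{\rm ul}$ with $\sup_t\norm{A_i(t)}_{s+1,{\rm ul}}$ finite (which boils down to $\overline V \in H^{s+2}_{\rm ul}$, Corollary \ref{corollaire_controle_f_V_bar}), and then cites Theorem~1 of \cite{Kato} for both existence/uniqueness and the two displayed estimates. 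You instead reconstruct the proof of that theorem — $H^{s+1}$ energy estimate via $\partial^\alpha$-differentiation, cancellation of the gyroscopic term, Friedrichs mollification, weak-$*$ extraction and a Bona--Smith argument for continuity, $L^2$ uniqueness. That is a valid and more self-contained route, but it reproves a black box that the paper is content to quote, and it is the more delicate path precisely because of the uniformly-local nature of $\overline V$.

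On that last point, one technical caution: the commutator bound you display,
\begin{equation*}
\norm{[\partial^\alpha,u+\overline V]\cdot\nabla\widetilde u}_{L^2} \lesssim \norm{\nabla(u+\overline V)}_{L^\infty}\norm{\widetilde u}_{H^{s+1}} + \norm{u+\overline V}_{H^{s+1}_{\rm ul}}\norm{\nabla\widetilde u}_{L^\infty},
\end{equation*}
is not literally correct when the coefficient is only uniformly local. The top-order terms $\partial^\beta\overline V\cdot\partial^\gamma\nabla\widetilde u$ with $|\beta|=s,\,s+1$ have $\partial^\beta\overline V$ only in $L^2_{\rm ul}$, and multiplying by a merely bounded $\nabla\widetilde u$ would only give an $L^2_{\rm ul}$ product, not an $L^2$ one. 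One must use that $\widetilde u \in H^{s+1}$ globally, so that $\partial^\gamma\nabla\widetilde u$ has local $L^\infty$ norms that are $\ell^2$-summable over a unit covering (e.g.\ via $\norm{\partial^\gamma\nabla\widetilde u}_{L^\infty(B(k,1))} \lesssim \norm{\partial^\gamma\nabla\widetilde u}_{H^2(B(k,1))}$), which is exactly the mechanism behind Lemma~2.9 of \cite{Kato}. You correctly flag the uniformly-local subtlety at the end and your final differential inequality is right, so this is an imprecision in the intermediate statement rather than a gap in the argument, but the bound as written should be corrected to reflect that it is the $H^{s+1}$ control of $\widetilde u$, not just $\norm{\nabla\widetilde u}_{L^\infty}$, that closes the estimate.
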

\begin{proof}
The proof is a direct application of Theorem 1 of \cite{Kato} which gives the well-posedness result and the estimates: We can rewrite \eqref{linear_euler_equation_V_bar} as 
\begin{equation*}
\partial_t \widetilde u + \sum_{i=1}^2 A_i \partial_i \widetilde u + A_3 \widetilde u = f
\end{equation*}

where $A_i := (u_i + \overline{V}_i)I_2$ for $i \in \{1,2\}$ and $A_3 := \begin{pmatrix} \partial_1 \overline{V}_1 & \partial_2 \overline{V}_1 + 1 \\ \partial_1 \overline{V}_2 - 1 & \partial_2 \overline{V}_2 \end{pmatrix}$.

To apply the theorem we need to prove the following:
\begin{enumerate}

\item $A_i \in C_T  L^2_{\rm{ul}}$ for $1 \leq i \leq 3$
\item $\forall t \in [0,T] \norm{A_i(t)}_{s+1,\rm{ul}} \leq K$ for $1 \leq i \leq 3$
\item $A_1$ and $A_2$ symmetric
\item $\widetilde f \in L^1_T H^{s+1} \cap C_T H^s$
\item $\widetilde u_0 \in H^{s+1}$

\end{enumerate}
where $K := \norm{\overline{V}}_ {L^\infty_T H^{s+2}_{\rm{ul}}} + \norm{u}_{L^\infty_T H^{s+1}_{\rm{ul}}} + C$. The three last points are automatically checked by the assumptions of the theorem. For the first point and the second point, since $u$ is in $\mathcal{C}_T H^{s+1}$, we only need to prove that $\overline{V}$ is in $H^{s+2}_{\rm{ul}}$, which is given by Corollary \ref{corollaire_controle_f_V_bar}.

\end{proof}

As in \cite{Kato} and \cite{Makino} we will use the previous estimates to apply a fixed point theorem $u \mapsto \widetilde u$ on Equation \eqref{linear_euler_equation_V_bar} to prove the well-posedness of the non-linear equation \eqref{euler_equation_V_bar}. Let us first recall that we have fixed $u_0 \in H^{s+1}$, $(\omega,\rho) \in X_T$ (where $X_T$ is defined by \eqref{definition_X_T}) and
\begin{align*}
R_0 &:= R[\rho_0 + \omega_0] \\
M_0 &:= \max(\norm{\rho_0}_{H^s},\norm{\omega_0}_{H^s},\norm{u_0}_{H^{s+1}}) \\
V &:= -\nabla^\bot g\ast(\rho+\omega) \\
\overline{V} &:= -\left(\int\omega_0 + \rho_0\right)\nabla^\bot g\ast\chi\\
f &:= (\overline{V}-V)^\bot - \overline{V}\cdot\nabla\overline{V}.
\end{align*}
Then the well-posedness of \eqref{euler_equation_V_bar} is given by the following theorem:
\begin{theorem}\label{well_posedness_presureless_euler}
Let $s$ be an integer such that $s \geq 3$, then
\begin{enumerate}

\item There exists $T^\ast = T^\ast(M_0,R_0) \leq T$ such that if $T_1 \leq T^\ast$, there is a unique solution $u \in C_{T_1} H^{s+1} \cap C_{T_1}^1 H^s$ to \eqref{euler_equation_V_bar}, with
\begin{equation*}
\norm{u}_{L^\infty_{T_1} H^{s+1}} \leq 2M_0.
\end{equation*}

\item Let $u$ and $u'$ be two solutions defined on $[0,T_1]$ with initial condition $u_0$ and forcing terms $f$ and $f'$, where 
\begin{align*}
f' &:= (\overline{V}-V')^\bot - \overline{V}\cdot\nabla\overline{V} \\
V' &:= -\nabla^\bot g\ast(\rho'+\omega')
\end{align*}
and $(\rho',\omega') \in X_T$. Then we have
\begin{equation*}
\norm{u - u'}_{L^\infty_{T_1} H^{r}} \leq Ce^{C(M_0,R_0)T_1}\norm{V - V'}_{L^1_{T_1} H^{r}}
\end{equation*}
where $0 \leq r \leq s$.
\end{enumerate}
\end{theorem}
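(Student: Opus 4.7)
The plan is to carry out a Kato--Makino-style fixed point argument built on the linear theory of Theorem \ref{well_posedness_linear_presureless_euler}. Since $f$ and $\overline{V}$ are fixed (they depend only on the data $(\rho,\omega)\in X_T$, not on $u$), I would define the map
\begin{equation*}
\Phi : u \longmapsto \widetilde u,
\end{equation*}
where $\widetilde u$ is the unique solution of the linearized equation \eqref{linear_euler_equation_V_bar} with initial data $u_0$ and forcing $f$. By Corollary \ref{corollaire_controle_f_V_bar} we have $\norm{\overline{V}}_{H^{s+2}_{\rm ul}}+\norm{f}_{L^\infty_T H^{s+1}}\leq C(R_0,M_0)$, so Theorem \ref{well_posedness_linear_presureless_euler} applies. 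For $T_1 \leq T$ and $u$ in the closed ball
\begin{equation*}
B = \{u \in \mathcal{C}_{T_1} H^{s+1} : u(0)=u_0,\; \norm{u}_{L^\infty_{T_1} H^{s+1}}\leq 2M_0\},
\end{equation*}
the first estimate of Theorem \ref{well_posedness_linear_presureless_euler} yields
\begin{equation*}
\norm{\Phi(u)}_{L^\infty_{T_1} H^{s+1}} \leq e^{C T_1 (C(R_0,M_0)+2M_0+1)}\bigl(M_0 + T_1\, C(R_0,M_0)\bigr),
\end{equation*}
which is bounded by $2M_0$ once $T_1\leq T^*(R_0,M_0)$ is small enough. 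Hence $\Phi(B)\subset B$.

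To obtain a fixed point I would show $\Phi$ is a contraction in the weaker norm $L^\infty_{T_1} L^2$. Writing $\delta\widetilde u = \Phi(u_1)-\Phi(u_2)$ for $u_1,u_2\in B$, subtracting the two linearized equations gives
\begin{equation*}
\partial_t \delta\widetilde u + ((u_1+\overline{V})\cdot \nabla)\delta\widetilde u + (\delta\widetilde u\cdot\nabla)\overline{V} - \delta\widetilde u^\bot = -((u_1-u_2)\cdot\nabla)\Phi(u_2),
\end{equation*}
with zero initial data. A straightforward $L^2$ energy estimate, using $\div(u_1+\overline{V})\in L^\infty$, $\nabla\overline{V}\in L^\infty$ and $\nabla\Phi(u_2)\in L^\infty$ (all controlled by $C(R_0,M_0)$ thanks to the $H^{s+1}$ bound and Sobolev embedding), produces
\begin{equation*}
\norm{\delta \widetilde u}_{L^\infty_{T_1} L^2} \leq T_1\, C(R_0,M_0)\, e^{T_1 C(R_0,M_0)}\, \norm{u_1-u_2}_{L^\infty_{T_1} L^2}.
\end{equation*}
Shrinking $T^*$ further makes the prefactor strictly less than $1$. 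Since $B$ is closed in $L^\infty_{T_1} L^2$ (closed balls of $L^\infty_{T_1} H^{s+1}$ are weak-$\ast$ compact, hence closed under any coarser topology), the Banach fixed point theorem applies and gives a unique fixed point $u\in B$. The announced $\mathcal{C}_{T_1} H^{s+1}\cap\mathcal{C}^1_{T_1} H^s$ regularity then follows by applying Theorem \ref{well_posedness_linear_presureless_euler} once more to the equation $\Phi(u)=u$.

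For part (2), subtracting \eqref{euler_equation_V_bar} written for $u$ and $u'$ produces
\begin{equation*}
\partial_t(u-u') + ((u+\overline{V})\cdot\nabla)(u-u') + ((u-u')\cdot\nabla)u' + ((u-u')\cdot\nabla)\overline{V} = (u-u')^\bot + (f-f'),
\end{equation*}
with $f-f' = -(V-V')^\bot$. For $0\leq r\leq s$ a Kato-type $H^r$ commutator/energy estimate (analogous to the one underlying Theorem \ref{well_posedness_linear_presureless_euler}) gives
\begin{equation*}
\frac{d}{dt}\norm{u-u'}_{H^r} \leq C(R_0,M_0)\norm{u-u'}_{H^r} + C\norm{V-V'}_{H^r},
\end{equation*}
and Grönwall's lemma yields the claimed stability bound. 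The main obstacle is the regularity loss built into the scheme: $\Phi$ cannot be contracting in $H^{s+1}$, so the contraction must be run in a lower-regularity norm while uniform $H^{s+1}$ control is maintained on the ball $B$; once this is set up, everything else reduces to standard linear transport estimates.
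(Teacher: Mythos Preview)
Your approach is essentially the paper's: a Kato--Makino iteration using Theorem \ref{well_posedness_linear_presureless_euler}, with uniform $H^{s+1}$ control on the ball and contraction in $L^\infty_{T_1} L^2$, then the subtraction argument for part (2), where the paper likewise rewrites the difference equation as a linear symmetric system and invokes Kato's $H^r$ estimate directly.

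One technical point in your write-up is not quite right and the paper handles it differently. Your set $B\subset \mathcal{C}_{T_1} H^{s+1}$ is \emph{not} closed in $L^\infty_{T_1} L^2$: a limit in $L^\infty_{T_1} L^2$ of a bounded sequence in $H^{s+1}$ will lie in $L^\infty_{T_1} H^{s+1}$ by weak lower semicontinuity, but there is no reason for strong continuity in $H^{s+1}$ to survive (and without any time continuity, the initial condition $u(0)=u_0$ is not even well-posed). The paper fixes this by iterating instead on
\[
\widetilde X_{T_1}=\Big\{u\in L^\infty_{T_1} H^{s+1}\cap C_{T_1} H^s:\ u(0)=u_0,\ \norm{u}_{L^\infty_{T_1} H^{s+1}}\le 2M_0,\ \norm{u(t)-u(t')}_{H^s}\le \widetilde L|t-t'|\Big\},
\]
where $\widetilde L=\widetilde L(M_0,R_0)$ comes from the second estimate of Theorem \ref{well_posedness_linear_presureless_euler}. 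The Lipschitz-in-$H^s$ clause is stable under $L^\infty_{T_1} L^2$ limits (again by lower semicontinuity), so $\widetilde X_{T_1}$ is complete for that metric, and it still provides enough time regularity to feed the coefficients back into Kato's theorem. The $\mathcal{C}_{T_1} H^{s+1}\cap \mathcal{C}^1_{T_1} H^s$ regularity of the fixed point is then obtained \emph{a posteriori} by a short mollifier argument, not simply by reapplying Theorem \ref{well_posedness_linear_presureless_euler}. With this adjustment your sketch matches the paper's proof.
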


\begin{proof}
Let $T_1 \leq T$. We will use a fixed-point method on the following subset of $\mathcal{C}_{T_1}L^2$:
\begin{multline*}
\widetilde{X}_{T_1} := \bigg\{u \in  L^\infty_{T_1} H^{s+1} \cap C_{T_1} H^s \bigg| \norm{u}_{L^\infty_{T_1} H^{s+1}} \leq 2M_0, u(0) = u_0, \\ \norm{u(t) - u(t')}_ {H^s} \leq \widetilde L|t-t'| \; \; \forall t,t' \in [0,T_1] \bigg\}
\end{multline*}
where $\widetilde L$ depends only on $M_0$ and $R_0$ and $c$ are constants to be fixed later. Let $u \in \widetilde X_{T_1}$ and $\widetilde u$ be the solution of \eqref{linear_euler_equation_V_bar} associated to $u$. By Theorem \ref{well_posedness_linear_presureless_euler}, for $t \leq T_1$, we have:
\begin{align*}
\norm{\widetilde u(t)}_{H^{s+1}} &\leq e^{cT_1(\norm{\overline{V}}_ {L^\infty_T H^{s+2}_{\rm{ul}}} + \norm{u}_{L^\infty_T H^{s+1}}+1)} \left(\norm{u_0}_{H^s} + c\norm{f}_{L^1_{T_1} H^{s+1}}\right) \\
&\leq e^{cT_1(C(R_0,M_0) + 2M_0+1)}(M_0 + cT_1C(M_0,R_0))
\end{align*}
by Corollary \ref{corollaire_controle_f_V_bar}. Thus we get
\begin{equation*}
\norm{\widetilde u(t)}_{H^{s+1}} \leq 2M_0 
\end{equation*}
if $T_1$ is small enough. Moreover, using Corollary \ref{corollaire_controle_f_V_bar} again, we get
\begin{align*}
\norm{\partial_t\widetilde u(t)}_{H^s} &\leq c\left(\norm{f(t)}_{H^s} + (\norm{u}_{L^\infty_T H^{s+1}}+\norm{\overline{V}}_ {L^\infty_T H^{s+2}_{\rm{ul}}}+1)\norm{\widetilde u}_{H^{s+1}} \right)  \\
&\leq c(C(M_0,R_0) + (2M_0 + C(M_0,R_0)+1)2M_0) =: \widetilde L
\end{align*}

Thus for all $T_1 \leq T^\ast$ we have built a map $\Psi : \widetilde X_{T_1} \longrightarrow \widetilde X_{T_1}$ such that $\Psi(u) = \widetilde u$, where $T^\ast = T^\ast(M_0,R_0)$. We will now show that $\Psi$ is a contraction for the induced distance on $X_{T_1}$. Let $u$ and $w$ be two elements of $X_{T_1}$ and set $U := u - w$. Then $\widetilde U := \widetilde u - \widetilde w$ satisfies:
\begin{equation*}
\partial_t \widetilde U + ((u + \overline{V})\cdot\nabla) \widetilde U + (\widetilde U \cdot \nabla) \overline{V} = -(U\cdot \nabla) \widetilde w + \widetilde U^\bot.
\end{equation*}
Thus since $(U\cdot\nabla)\tilde w \in \mathcal{C}_T L^2 \cap L^1_T H^1$  we can apply Theorem 1 from \cite{Kato} to have the following estimate:
\begin{align*}
\norm{\widetilde U}_{L^\infty_{T_1} L^2} &\leq e^{cT_1(\norm{\overline{V}}_ {L^\infty_T H^{s+2}_{\rm{ul}}} + \norm{u}_{L^\infty_T H^{s+1}}+1)} \left(0+ c\norm{(U\cdot \nabla)\widetilde w}_{L^1_{T_1} L^2}\right) \\
&\leq e^{cT_1(C(M_0,R_0) + 2M_ 0+1)}cT_1\norm{\nabla \widetilde w}_{L^\infty_{T_1} L^\infty}\norm{U}_{L^\infty_{T_1} L^2} \\
&\leq 4cM_0T_1e^{cT_1(C(M_0,R_0) + 2M_ 0+1)}\norm{U}_{L^\infty_{T_1} L^2}
\end{align*}
using \ref{corollaire_controle_f_V_bar} in the last inequality. Thus $\Psi$ is a contraction if $T$ is small enough, so since $\widetilde X_{T_1}$ is complete (this can be proved in the same way as the closedness of $X_T$ which is proved in the beginning of section \ref{subsection_EEP}), it has a unique fixed point in $\widetilde X_{T_1}$, thus \eqref{euler_equation_V_bar} has a unique solution for short time. Remark that the solution we find belongs to the space $L^\infty_{T_1} H^{s+1} \cap W^{1,\infty}_{T_1} H^s$. Let us justify that it also belongs to $\mathcal{C}_{T_1} H^{s+1} \cap \mathcal{C}^1_{T_1} H^s$:

Let $\epsilon > 0$, $t_1, t_2 \in [0,T_1]$ and $\chi_n$ be a mollifier. We have:
\begin{align*}
\norm{u(t_1) - u(t_2)}_{H^{s+1}} 
\leq& \norm{\chi_n \ast(u(t_1) - u(t_2))}_{H^{s+1}} \\
&+ \norm{[I_2-\chi_n\ast](u(t_1) - u(t_2))}_{H^{s+1}} \\
\leq& C_n\norm{u(t_1) - u(t_2))}_{L^2} + \epsilon
\end{align*}
if $n$ is big enough (see for example Theorem 4.22 of \cite{Brezis}). Thus since $u \in \mathcal{C}_{T_1} L^2$, if $|t_1 - t_2|$ is small enough,
\begin{equation*}
\norm{u(t_1) - u(t_2)}_{H^{s+1}} \leq 2\epsilon
\end{equation*}
Thus $u \in \mathcal{C}_{T_1} H^{s+1}$.  Moreover we have
\begin{equation*}
\partial_t u = - ((u+\overline{V})\cdot \nabla) u - (u \cdot \nabla) \overline{V} + u^\bot + f
\end{equation*}
By assumption $f \in \mathcal{C}_{T_1} H^s$ and by the previous fixed point $u^\bot \in \mathcal{C}_{T_1} H^{s}$. Now using Claim $(1)$ of Proposition \ref{proposition_biot_savart}, $\overline{V} \in \mathcal{C}_{T_1} H^{s+1}_{\rm{ul}}$, so since $s \geq 2$, we have 
\begin{align*}
((u+\overline{V})\cdot \nabla) u &\in \mathcal{C}_{T_1} H^s \\
(u \cdot \nabla) \overline{V} &\in \mathcal{C}_{T_1} H^s
\end{align*}
applying Lemma 2.9 of \cite{Kato} which gives a sufficient condition to have the product of an $H^{s_1}_{\rm{ul}}$ and $H^{s_2}$ function in $H^r$. Thus $u \in \mathcal{C}^1_{T_1} H^s$.

Now let us prove the second point of our theorem: Let $u$ and $u'$ be two solutions associated to $f_1$ and $f_2$ defined on $[0,T_1]$ with $T_1 \leq T^\ast(M_0,R_0)$. Then $U := u - u'$ verifies:
\begin{equation*}
\partial_t U + ((u + \overline{V})\cdot\nabla) U + (U \cdot\nabla)(\overline{V} + u') = U^\bot + F
\end{equation*}
where $F := f - f'$. We can rewrite this equation as
\begin{equation*}
\partial_t U + \sum_{i=1}^2 A_i \partial_i U + B U = F
\end{equation*}
where $A_i := (u_i + \overline{V}_i)I_2$ and $B :=\begin{pmatrix} \partial_1 \overline{V}_1 + \partial_1 u'_1 & 1 + \partial_2 \overline{V}_1 + \partial_2 u'_1 \\ \partial_1 \overline{V}_2 + \partial_1 u'_2 - 1 & \partial_2 \overline{V}_2 + \partial_2 u'_2 \end{pmatrix}$.
Then by Theorem 1 of \cite{Kato}, for any $0 \leq r \leq s$ we have:
\begin{align*}
\norm{U}_{L^\infty_{T_1} H^{r}} &\leq C e^{cT_1(\norm{A_1}_{L^\infty_{T_1} H^s_{\rm{ul}}} + \norm{A_2}_{L^\infty_{T_1} H^s_{\rm{ul}}} + \norm{B}_{L^\infty_{T_1} H^s_{\rm{ul}}})}\norm{F}_{L^1_{T_1} H^{r}} \\
&\leq Ce^{c(\norm{\overline{V}}_ {L^\infty_T H^{s+2}_{\rm{ul}}}+M_0+1)T_1}\norm{F}_{L^1_{T_1} H^{r}} \\
&\leq Ce^{c(C(M_0,R_0)+M_0+1)T_1}\norm{V-V'}_{L^1_{T_1} H^{r}}
\end{align*}
where we used Corollary \ref{corollaire_controle_f_V_bar} in the last inequality.
\end{proof}

\subsection{Continuity equations} \label{subsection_continuity_equations}

In this subsection we still fix $s \geq 3$, $u \in C_T H^{s+1} \cap C_T^1 H^s$, $(\rho,\omega) \in (C_T H^s)^2$, $V := -\nabla^\bot g \ast(\rho + \omega)$, $\chi$ smooth with compact support such that $\int \chi = 1$, $\overline{V} := -\left(\int\omega_0 + \rho_0\right)\nabla^\bot g\ast\chi$, $v := u + \overline{V}$ and we consider the following continuity equations:
\begin{equation}
\label{transport_lineaire}
\left\{
\begin{aligned}
& \partial_t \widetilde \omega + \div(\widetilde \omega V) = 0 \\
& \partial_t \widetilde \rho + \div(\widetilde \rho v) = 0
\end{aligned}\right.
\end{equation} 
with initial conditions $(\rho_0,\omega_0)$.

\begin{theorem}\label{well_posedness_linear_continuity}
Let $u$,$\rho$,$\omega$ be as in the upper paragraph, there exists a solution $(\widetilde \rho, \widetilde \omega) \in \mathcal{C}_T H^s \cap C^1_T H^{s-1}$ of \eqref{transport_lineaire}, unique in $C_T L^2$. Moreover, we have the following estimates:
\begin{align*}
\norm{\widetilde \rho}_{L^\infty_T H^s} &\leq \norm{\rho_0}_{H^s}e^{cT\norm{u}_{L^\infty_T H^s}}\exp\left(ce^{cT\norm{u}_{L^\infty_T H^s}}T\norm{\nabla v}_{L^\infty_T H^s}\right) \\
\norm{\widetilde \omega}_{L^\infty_T H^s} &\leq \norm{\omega_0}e^{cT\norm{\nabla V}_{L^\infty_T H^s}} \\
\norm{\partial_t \widetilde \omega}_{L^\infty_T H^{s-1}} &\leq C(1+R_T[\rho + \omega]^\frac{1}{3})\norm{\rho + \omega}_{L^\infty_TH^s}\norm{\widetilde \omega}_{L^\infty_T H^s} \\
\norm{\partial_t \widetilde \rho}_{L^\infty_T H^{s-1}} &\leq C\left(\left|\int(\rho_0+\omega_0)\right|+\norm{u}_{L^\infty_TH^{s+1}}\right)\norm{\widetilde \rho}_{L^\infty_T H^s}.
\end{align*}

Now let $\widetilde \rho_1$ and $\widetilde \rho_2$ be two solutions associated to two velocity fields $v_1 = u_1 + \overline{V}$ and $v_2 = u_2 + \overline{V}$ with same initial conditions, and $\widetilde \omega_1$ and $\widetilde \omega_2$ be two solutions associated to two velocity fields $V_1$ and $V_2$ with same initial conditions, then we have the following estimates:
\begin{equation*}
\norm{\widetilde \omega_1 - \widetilde \omega_2}_{L^\infty_TL^2} 
\leq  CT\norm{V_1 - V_2}_{L^\infty_T L^2}\norm{\widetilde \omega_2}_{L^\infty_TH^3}
\end{equation*}
\begin{align*}
\norm{\widetilde\rho_1 - \widetilde\rho_2}_{L^\infty_TL^2} \leq CT \norm{\widetilde \rho_2}_{L^\infty_TH^3}\norm{v_2 - v_1}_{L^\infty_TH^1}e^{cT\norm{u_1}_{L^\infty_TH^3}}.
\end{align*}
\end{theorem}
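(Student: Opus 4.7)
The plan is to treat the two equations in \eqref{transport_lineaire} separately, as linear transport/continuity equations with coefficients that are smooth in $x$. Two structural observations simplify the analysis. First, $V=-\nabla^\bot g*(\rho+\omega)$ and $\overline V$ are both curls and hence divergence-free, so the first equation is pure transport and in the second $\div v=\div u$. Second, although $V$ and $\overline V$ are not in $L^2(\R^2)$ globally, they belong to $H^{s+1}_{\rm ul}$ (by Proposition \ref{proposition_biot_savart} and Corollary \ref{corollaire_controle_f_V_bar}), and the initial data have compact support, so $\widetilde\rho(t)$ and $\widetilde\omega(t)$ remain compactly supported on $[0,T]$ and only local ($H^s_{\rm ul}$) norms of the velocities will ever enter the energy estimates.

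For existence I would mollify the velocity fields in space and solve the regularized equations by the method of characteristics, writing
\begin{equation*}
\widetilde\rho^\epsilon(t,X_\epsilon(t,x))=\rho_0(x)\exp\Bigl(-\int_0^t\div v_\epsilon(s,X_\epsilon(s,x))\dd s\Bigr)
\end{equation*}
and similarly for $\widetilde\omega^\epsilon$ transported by the (divergence-free) flow of $V_\epsilon$. Extracting uniform $H^s$ bounds from the energy estimates described below, I would pass to the limit as $\epsilon\to 0$. Uniqueness in $\mathcal{C}_TL^2$ follows from a direct $L^2$ energy estimate on the difference of two solutions, integrating by parts and using respectively $\div V=0$ and the $L^\infty$ bound on $\div v=\div u$ coming from Sobolev embedding.

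For the $H^s$ bounds I would apply $\partial^\alpha$ with $|\alpha|\le s$ and control the commutator $[\partial^\alpha,V\cdot\nabla]$ (resp.\ $[\partial^\alpha,v\cdot\nabla]$) by a Kato--Ponce estimate. For $\widetilde\omega$ the transport structure gives $\frac{d}{dt}\norm{\widetilde\omega}_{H^s}\le c\norm{\nabla V}_{H^s}\norm{\widetilde\omega}_{H^s}$, and Gr\"onwall yields the stated estimate. For $\widetilde\rho$ I would split $v=u+\overline V$: the $u$ part lies in $H^s$ and is handled directly, while for $\overline V$ I would truncate to a neighborhood of $\supp\widetilde\rho(t)$, reducing the estimate to a compactly supported $H^{s+1}$ velocity whose norm is controlled uniformly in time by Corollary \ref{corollaire_controle_f_V_bar}. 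The double exponential arises because the pointwise bound on $\widetilde\rho$ --- which involves the growth $e^{cT\norm{u}_{L^\infty_TH^s}}$ of the Jacobian of the Lagrangian flow --- must be inserted into the Gr\"onwall inequality driving the $H^s$ norm, the latter depending on $\norm{\nabla v}_{L^\infty_TH^s}$. The bounds on $\partial_t\widetilde\omega$ and $\partial_t\widetilde\rho$ follow by expressing them directly from the equations: Proposition \ref{proposition_biot_savart} furnishes $\norm{V}_{L^\infty}\le C\sqrt{R_T[\rho+\omega]}\norm{\rho+\omega}_{H^1}$ and $\norm{\nabla V}_{H^{s-1}}\le C\norm{\rho+\omega}_{H^s}$, which combine to the stated inequality, while for $\widetilde\rho$ the factor $|\int(\rho_0+\omega_0)|$ is precisely $\norm{\overline V}_{L^\infty}$ up to a constant.

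Finally, the two stability estimates are obtained from the equations satisfied by the differences $\widetilde\omega_1-\widetilde\omega_2$ and $\widetilde\rho_1-\widetilde\rho_2$, which are inhomogeneous transport equations with source terms $-(V_1-V_2)\cdot\nabla\widetilde\omega_2$ and $-\div((v_1-v_2)\widetilde\rho_2)$ respectively. An $L^2$ energy estimate, combined with the Sobolev embeddings $\norm{\nabla\widetilde\omega_2}_{L^\infty}\le c\norm{\widetilde\omega_2}_{H^3}$ and $\norm{\nabla\widetilde\rho_2}_{L^\infty}\le c\norm{\widetilde\rho_2}_{H^3}$ in dimension two, followed by Gr\"onwall, produces the required bounds. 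The main obstacle throughout is the absence of global $L^2$ integrability of $V$ and $\overline V$; it is resolved entirely by the compact-support propagation noted above, so the proof reduces to classical linear hyperbolic estimates applied in uniformly local Sobolev spaces.
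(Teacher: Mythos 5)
Your proposal follows essentially the same roadmap as the paper: treat the two continuity equations as linear transport/continuity equations, obtain $H^s$ estimates by applying $\partial^\alpha$ and controlling the commutator, get the time-derivative bounds directly from the equations using Proposition~\ref{proposition_biot_savart}, and derive the stability estimates from the inhomogeneous equations on the differences with an $L^2$ energy inequality, Sobolev embedding in dimension two, and Gr\"onwall. The paper handles the foundational well-posedness in $\mathcal{C}_T L^2$ by citing a lemma (Lemma~\ref{lemma_wp_continuity}, via \cite{BahouriCheminDanchin} and \cite{AmbrosioCrippa}) rather than your constructive mollification-plus-characteristics argument, and it bounds the commutator by a direct Leibniz-rule case analysis rather than Kato--Ponce; these are interchangeable. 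The one genuine structural difference is your treatment of $\overline V\notin L^2$: you propose truncating $\overline V$ near $\supp\widetilde\rho(t)$, whereas the paper simply observes that the commutator $F^\alpha$ only involves $\partial^{\alpha_1}v$ with $|\alpha_1|\ge 1$, so only $\nabla\overline V\in H^{s+1}(\mathbb{R}^2)$ (globally square-integrable) ever appears in the energy estimate; the zero-order part of $v$ enters only through the pure transport term, which is absorbed by the flow/Jacobian as in \eqref{estimee_continuite_l2}. Your truncation is correct but makes extra work for yourself; the paper's observation is cleaner and is also what produces the $\norm{\nabla v}_{L^\infty_T H^s}$ rather than any global norm of $v$ in the stated bound.
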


We will also give a general lemma to control the support of a compactly supported solution of a continuity equation:

\begin{lemma}\label{lemme_controle_support}
If $\mu$ is the solution of the following continuity equation,
\begin{equation*}
\partial_t \mu + \div(\mu a) = 0
\end{equation*}
with $a \in C_T W^{1,\infty}$ and $\mu_0$ with compact support, then $\mu$ has compact support and 
\begin{equation}
\label{controle_support}
R_T[\mu] \leq R[\mu_0] + T\norm{a}_{L^\infty_T L^\infty}.
\end{equation}

\end{lemma}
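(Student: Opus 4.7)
The natural approach is via the method of characteristics. Since $a \in \mathcal{C}_T W^{1,\infty}$ is Lipschitz in space uniformly in time, the Cauchy--Lipschitz theorem provides a unique bi-Lipschitz flow $X : [0,T] \times \mathbb{R}^2 \to \mathbb{R}^2$ solving
\begin{equation*}
\partial_t X(t,x) = a(t,X(t,x)), \qquad X(0,x) = x.
\end{equation*}
The plan is to show that $\supp \mu(t) \subset X(t, \supp \mu_0)$ and then to bound the displacement $|X(t,x)-x|$ by a straightforward Grönwall-free estimate.

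For the transport step, I would invoke the standard identification of the unique weak solution of the continuity equation with Lipschitz coefficients as the pushforward $\mu(t) = X(t,\cdot)_\# \mu_0$ (this can be checked directly by differentiating $t \mapsto \int \varphi(X(t,x))\mu_0(x)\,\mathrm{d}x$ against a test function $\varphi$, and uniqueness of solutions then identifies it with $\mu(t)$, which is applicable here since $\mu$ has the Sobolev regularity used throughout Section \ref{section:2}). From this pushforward identity it is immediate that if $\mu_0$ vanishes outside $\overline{B(0,R[\mu_0])}$, then $\mu(t)$ vanishes outside $X(t,\overline{B(0,R[\mu_0])})$, which is compact since $X(t,\cdot)$ is continuous.

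For the quantitative displacement bound, integrating the characteristic ODE yields
\begin{equation*}
|X(t,x) - x| \leq \int_0^t |a(s,X(s,x))|\,\mathrm{d}s \leq T\, \norm{a}_{L^\infty_T L^\infty}
\end{equation*}
for every $x \in \mathbb{R}^2$ and $t \in [0,T]$. Combining this with the previous inclusion, every $y \in \supp \mu(t)$ is of the form $y = X(t,x)$ with $|x| \leq R[\mu_0]$, whence $|y| \leq R[\mu_0] + T\norm{a}_{L^\infty_T L^\infty}$, giving \eqref{controle_support}. No real obstacle is expected; the only subtlety is justifying the pushforward representation, but this follows from the Lipschitz regularity of $a$ and the uniqueness of regular solutions of the linear continuity equation in this setting.
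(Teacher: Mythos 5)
Your proof is correct and follows essentially the same route as the paper: both identify $\supp\mu(t)$ with the image of $\supp\mu_0$ under the Lipschitz flow of $a$, and both bound the displacement $|X(t,x)-x|$ by $\int_0^t|a(s,X(s,x))|\,\mathrm{d}s\leq T\norm{a}_{L^\infty_T L^\infty}$. The only difference is that you spell out the pushforward representation and its justification in slightly more detail, which the paper leaves implicit.
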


In order to prove the main theorem we will need the following result:
\begin{lemma}\label{lemma_wp_continuity}
If $a$ is a Lipschitz vector field, $\mu_0 \in L^2$ and $f \in L^1_T L^2$ then there exists a unique solution of the continuity equation
\begin{equation*}
\partial_t \mu + \div(\mu a) = f
\end{equation*} 
in $\mathcal{C}_TL^2$. Moreover we have the following estimate
\begin{equation}
\label{estimee_continuite_l2}
\norm{\mu(t)}_{L^2} \leq \left(\norm{\mu_0}_{L^2} + \int_0^t \norm{f(\tau)}_{L^2}\dd\tau\right)e^{c\int_0^t\norm{\div(a)(\tau)}_{L^\infty}\dd\tau}.
\end{equation}
\end{lemma}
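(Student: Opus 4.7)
The plan is to construct the solution by regularization, relying on the $L^2$ estimate \eqref{estimee_continuite_l2} both for the passage to the limit and for the uniqueness.

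First I would derive the estimate at a formal level. Multiplying $\partial_t\mu + \div(\mu a) = f$ by $\mu$ and integrating over $\mathbb{R}^2$ gives
\begin{equation*}
\frac{1}{2}\frac{d}{dt}\norm{\mu}_{L^2}^2 = \int f\mu - \int\mu\,\div(\mu a) = \int f\mu - \frac{1}{2}\int\mu^2\,\div(a),
\end{equation*}
after one integration by parts in the transport term. Cauchy-Schwarz then yields $\frac{d}{dt}\norm{\mu}_{L^2} \leq \norm{f}_{L^2} + \tfrac{1}{2}\norm{\div(a)}_{L^\infty}\norm{\mu}_{L^2}$, and a standard Grönwall argument produces \eqref{estimee_continuite_l2} with $c=\tfrac12$.

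For existence I would mollify the data into smooth $a_\epsilon$, $\mu_{0,\epsilon}$, $f_\epsilon$, keeping the Lipschitz constant of $a_\epsilon$ uniform in $\epsilon$. The Cauchy-Lipschitz flow of $a_\epsilon$ then gives a unique smooth classical solution $\mu_\epsilon$ via the method of characteristics, on which the energy computation above is rigorous and yields a bound in $L^\infty_T L^2$ uniform in $\epsilon$. The equation itself controls $\partial_t\mu_\epsilon$ uniformly in $L^\infty_T H^{-1}$, so weak-$*$ extraction in $L^\infty_T L^2$ combined with an Aubin-Lions argument provides a limit $\mu \in L^\infty_T L^2$ satisfying the continuity equation in the sense of distributions. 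Time continuity $\mu \in \mathcal{C}_T L^2$ is then recovered from $\partial_t\mu = f - \div(\mu a) \in L^1_T H^{-1}$ by a standard density argument.

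The main obstacle is justifying uniqueness at the $L^2$ level: if $\mu_1,\mu_2 \in \mathcal{C}_T L^2$ share the same data, their difference $\mu$ solves the homogeneous continuity equation with zero initial datum, but the integration-by-parts identity $\tfrac12\int a\cdot\nabla(\mu^2) = -\tfrac12\int \mu^2\,\div(a)$ is not directly available since $\mu\in L^2$ does not imply $\mu^2\in W^{1,1}$. The standard fix is a DiPerna-Lions commutator argument: setting $\mu_\delta = \mu *_x \rho_\delta$, one shows $\mu_\delta$ satisfies the same equation up to a defect $r_\delta = \div(\mu a) * \rho_\delta - \div(\mu_\delta a)$ which vanishes in $L^1_T L^2$ as $\delta\to 0$ because $a \in W^{1,\infty}$. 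Applying the rigorous estimate to $\mu_\delta$ and sending $\delta \to 0$ forces $\mu \equiv 0$; the same regularization also upgrades \eqref{estimee_continuite_l2} from the approximations to the weak solution constructed above, completing the proof.
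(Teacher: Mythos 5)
Your proposal is correct but follows a genuinely different route from the paper. The paper does not construct the solution by hand: it cites Theorem 3.19 and Remark 3.20 of \cite{BahouriCheminDanchin} for existence and uniqueness in $\mathcal{C}_T L^2$, and then proves \eqref{estimee_continuite_l2} via a \emph{Lagrangian} argument. Specifically, it invokes the representation formula from Proposition 6 of \cite{AmbrosioCrippa},
\begin{equation*}
\mu(t,X(t,x)) = \mu_0(x) + \int_0^t\bigl(\div(a)(s,X(s,x))\,\mu(s,X(s,x)) + f(s,X(s,x))\bigr)\dd s,
\end{equation*}
where $X$ is the flow of $a$, applies Grönwall to the pulled-back quantities $\overline{\mu}(t,\cdot)=\mu(t,X(t,\cdot))$, and then transfers the bound from $\overline{\mu}$ to $\mu$ using the Jacobian estimate $\int |g(X(t,x))|^2\dd x \leq \norm{g}_{L^2}^2 e^{\norm{\div a}_{L^1_T L^\infty}}$ (inequality (7) of \cite{AmbrosioCrippa}). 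Your argument is \emph{Eulerian}: a formal $L^2$ energy identity made rigorous by mollification, with existence obtained by compactness of the mollified solutions and uniqueness by the DiPerna--Lions commutator lemma, which is legitimate here since $a\in W^{1,\infty}$ and $\mu\in L^2$ put you in the range $1/\infty + 1/2 \leq 1$ where the commutator defect vanishes in $L^2_{\rm loc}$. Both paths are standard and both produce the same estimate (your constant $c=1/2$ is at least as sharp as what the paper's change-of-variables argument gives); the paper's route is shorter because it outsources existence/uniqueness and uses the flow directly, whereas yours is more self-contained and does not rely on the ODE-flow machinery. One minor remark: for this linear equation, weak-$*$ compactness of $\mu_\epsilon$ in $L^\infty_T L^2$ together with the local uniform convergence $a_\epsilon\to a$ already suffices to pass to the limit in $\div(\mu_\epsilon a_\epsilon)$ in the distributional sense, so the Aubin--Lions step you invoke is not actually needed.
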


\begin{proof}[Proof of Lemma \ref{lemma_wp_continuity}]
The existence and uniqueness of the solution in $\mathcal{C}_TL^2$ can be obtained by Theorem 3.19 and Remark 3.20 of \cite{BahouriCheminDanchin}. Moreover by Proposition 6 of \cite{AmbrosioCrippa}, we know that for all $t$ and almost every $x$ we have
\begin{align*}
\mu(t,X(t,x)) =& \mu_0(x) \\ &+ \int_0^t \bigg(\div(a)(s,X(s,x))\mu(s,X(s,x)) + f(s,X(s,x))\bigg)\dd s
\end{align*} 
where $X$ is the flow associated to $a$. Let us denote $\overline{h}(t,x) = h(t,X(t,x))$ for any function $h$. Taking the $L^2$ norm of the upper inequality we get
\begin{equation*}
\norm{\overline{\mu}(t)}_{L^2} \leq \norm{\mu_0}_{L^2} + \norm{\overline{f}}_{L^1_TL^2} + \int_0^t \norm{\div(a)(s)}_{L^\infty}\norm{\overline{\mu}(s)}_{L^2}. 
\end{equation*} 
Thus by Gronwall lemma,
\begin{equation*}
\norm{\overline{\mu}(t)}_{L^2} \leq (\norm{\mu_0}_{L^2} + \norm{\overline{f}}_{L^1_TL^2})e^{\norm{\div(a)}_{L^1_TL^\infty}}.
\end{equation*}
Now remark that for any $L^2$ function $g$,
\begin{align*}
\int |g(X(t,x))|^2\dd x &= \int |JX^t(x)||g(x)|^2\dd x
&\leq \norm{g}_{L^2}^2e^{\norm{\div(a)}_{L^1_TL^\infty}}
\end{align*}
by inequality (7) of \cite{AmbrosioCrippa}. Using it for $\overline{\mu}$ and $\overline{f}$ we get inequality \eqref{estimee_continuite_l2}.
\end{proof}

Now we prove the main theorem of the section:
\begin{proof}[Proof of Theorem \ref{well_posedness_linear_continuity}]
Let us know use the previous lemma to prove the $H^s$ bound on $\widetilde \omega$. Let $\alpha$ be a multi-index such that $|\alpha| \leq s$. Then, since $V$ is divergent-free,
\begin{equation*}
\partial_t \partial^\alpha \widetilde \omega + \div(V\partial^\alpha \widetilde \omega) = F^\alpha 
\end{equation*}
where $F^\alpha$ is a combination of $\partial^{\alpha_1} V\cdot \partial^{\alpha_2} \nabla \omega$ with $|\alpha_1| + |\alpha_2| = s$, $|\alpha_2| \leq s-1$ and $|\alpha_1| \geq 1$. Thus by the upper estimate \eqref{estimee_continuite_l2}, since $V$ is divergent-free, we have:
\begin{equation*}
\norm{\partial^\alpha \widetilde \omega(t)}_{L^2} \leq \left(\norm{\partial^\alpha\widetilde\omega_0}_{L^2} + \int_0^t \norm{F^\alpha(\tau)}_{L^2}\dd \tau\right).
\end{equation*}
If $|\alpha_1| \leq s-1$, then
\begin{align*}
\norm{\partial^{\alpha_1} V\cdot \partial^{\alpha_2} \nabla \widetilde \omega}_{L^2} 
&\leq \norm{\partial^{\alpha_1 }V}_{L^\infty}\norm{\partial^{\alpha_2} \nabla \widetilde\omega}_{L^2} \\
&\leq \norm{\partial^{\alpha_1} V}_{H^2}\norm{\partial^{\alpha_2} \nabla \widetilde\omega}_{L^2}\\
&\leq \norm{\nabla V}_{H^s}\norm{\widetilde \omega}_{H^s}.
\end{align*}
If $|\alpha_1| = s$, then $\alpha_2 = 0$, thus
\begin{align*}
\norm{\partial^{\alpha_1} V\cdot \partial^{\alpha_2} \nabla \widetilde \omega}_{L^2} 
&\leq \norm{\nabla V}_{H^s}\norm{\nabla \widetilde \omega}_{L^\infty} \\
&\leq \norm{\nabla V}_{H^s}\norm{\nabla \widetilde \omega}_{H^2} \\
&\leq \norm{\nabla V}_{H^s}\norm{\widetilde \omega}_{H^s}.
\end{align*}
Thus
\begin{equation*}
\norm{\partial^\alpha \widetilde \omega(t)}_{L^2} \leq \left(\norm{\partial^\alpha\widetilde\omega_0}_{L^2} + c\int_0^t \norm{\nabla V(\tau)}_{H^s}\norm{\widetilde \omega(\tau)}_{H^s}\dd\tau\right).
\end{equation*}
Summing over all indices $\alpha$, we get
\begin{equation*}
\norm{\widetilde \omega(t)}_{H^s} \leq \left(\norm{\widetilde \omega_0}_{H^s} + c\int_0^t \norm{\nabla V(\tau)}_{H^s}\norm{\widetilde \omega(\tau)}_{H^s}\dd\tau\right).
\end{equation*}

By Grönwall's lemma we get the first inequality of our theorem. Now we will prove the estimate on $\widetilde \rho$. For a multi-index $\alpha$ with $|\alpha| \leq s$ we also have
\begin{equation*}
\partial_t \partial^\alpha \widetilde \rho + \div(v\partial^\alpha \widetilde \rho) = F^\alpha.
\end{equation*}

Because $v$ is not divergent-free, $F^\alpha$ is now a combination of $\partial ^{\alpha_1} v \partial^{\alpha_2}\widetilde \rho$ where $|\alpha_1| +  |\alpha_2| = s+1$, $|\alpha_1| \geq 1$ and $|\alpha_2| \leq s$. If $|\alpha_1| \leq s-1$, we have
\begin{align*}
\norm{\partial^{\alpha_1} v\cdot \partial^{\alpha_2} \widetilde \rho}_{L^2} 
&\leq \norm{\partial^{\alpha_1 }v}_{L^\infty}\norm{\partial^{\alpha_2} \widetilde\rho}_{L^2} \\
&\leq \norm{\partial^{\alpha_1} v}_{H^2}\norm{\partial^{\alpha_2}\widetilde\rho}_{L^2}\\
&\leq \norm{\nabla v}_{H^s}\norm{\widetilde \rho}_{H^s}.
\end{align*}
Now if $|\alpha_1| = s$ or $s+1$ (respectively  $|\alpha_2| = 0$ or $1$),
\begin{align*}
\norm{\partial^{\alpha_1} v\cdot \partial^{\alpha_2} \widetilde \rho}_{L^2} 
&\leq \norm{\nabla v}_{H^s}\norm{\partial^{\alpha_2}\widetilde \rho}_{L^\infty} \\
&\leq \norm{\nabla v}_{H^s}\norm{\partial^{\alpha_2}\widetilde \rho}_{H^2} \\
&\leq \norm{\nabla v}_{H^s}\norm{\widetilde \rho}_{H^s}.
\end{align*}

Thus
\begin{align*}
\norm{\partial^\alpha \widetilde \rho(t)}_{L^2} \leq& \bigg(\norm{\partial^\alpha\widetilde\rho_0}_{L^2}  \\
&+ c\int_0^t \norm{\nabla v(\tau)}_{H^s}\norm{\widetilde \rho(\tau)}_{H^s}\dd\tau\bigg)e^{c\int_0^T\norm{\div(v)}_{L^\infty}(\tau)\dd\tau}.
\end{align*}
Summing over all indices $\alpha$, we get
\begin{align*}
\norm{\widetilde \rho(t)}_{H^s} 
&\leq \left(\norm{\widetilde\rho_0}_{H^s} + c\int_0^t \norm{\nabla v(\tau)}_{H^s}\norm{\widetilde \rho(\tau)}_{H^s}\dd\tau\right)e^{c\int_0^T\norm{\div(v)}_{L^\infty}(\tau)\dd\tau} \\
&\leq \left(\norm{\widetilde\rho_0}_{H^s} + c\int_0^t \norm{\nabla v(\tau)}_{H^s}\norm{\widetilde \rho(\tau)}_{H^s}\dd\tau\right)e^{c\int_0^T\norm{u(\tau)}_{H^s}\dd\tau}
\end{align*}
because $\div(v) = \div(u)$. The corresponding estimate follows by Grönwall's lemma. 

Now let us bound the time derivatives of $\widetilde \omega$ and $\widetilde \rho$. Take $\alpha$ a multi-index with $|\alpha| \leq s-1$, then
\begin{equation*}
\partial_t \partial^\alpha \widetilde \omega = - \partial^\alpha(V\cdot \nabla \widetilde \omega) = \sum_{\alpha_1 + \alpha_2 = \alpha} c_{\alpha_1,\alpha_2} \partial^{\alpha_1} V \cdot \nabla \partial^{\alpha_2}\widetilde \omega.
\end{equation*}
Moreover,
\begin{align*}
\norm{\partial^{\alpha_1} V \cdot \nabla \partial^{\alpha_2}\widetilde \omega}_{L^2} 
&\leq \norm{\partial^{\alpha_1}V}_{L^\infty}\norm{\nabla\partial^{\alpha_2}\widetilde \omega}_{L^2} \\
&\leq C(\norm{V}_{L^\infty} + \norm{\nabla V}_{H^s})\norm{\widetilde \omega}_{H^s}.
\end{align*}
Now by Claim $(3)$ of Proposition \ref{proposition_biot_savart},
\begin{align*}
\norm{V}_{L^\infty}  \leq CR_T[\rho+\omega]^\frac{1}{3}\norm{\rho+\omega}_{H^1}
\end{align*}
and $\norm{\nabla V}_{H^s} \leq C\norm{\rho+\omega}_{H^s}$. Thus we have our estimate.

Let us do the same kind of computations for $\widetilde \rho$: 
\begin{align*}
\partial_t \partial^\alpha \widetilde \rho &= \partial^\alpha (\div(u)\widetilde \rho + u \cdot \nabla \widetilde \rho + \overline{V}\cdot \nabla \widetilde\rho).
\end{align*}
If $|\alpha_1 +  \alpha_2| = s-1$,
\begin{align*}
\norm{\partial^{\alpha_1}u \cdot \partial^{\alpha_2}\nabla \widetilde\rho}_{L^2} 
&\leq \norm{\partial^{\alpha_1}u}_{L^\infty}\norm{\partial^{\alpha_2}\widetilde \nabla \rho}_{L^2} \\
&\leq \norm{u}_{H^{s+1}}\norm{\widetilde \rho}_{H^s}.
\end{align*}
We do the same estimates for every term composing $\partial^\alpha(\div(u)\widetilde \rho)$, except for
\begin{align*}
\norm{\partial^\alpha\div(u)\widetilde \rho}_{L^2} &\leq \norm{u}_{H^s}\norm{\widetilde \rho}_{L^\infty} \\
&\leq \norm{u}_{H^{s+1}}\norm{\widetilde \rho}_{H^s}.
\end{align*}
Now for the third term, if $|\alpha_1 + \alpha_2| = s-1$,
\begin{align*}
\norm{\partial^{\alpha_1}\overline{V}\cdot \nabla \partial^{\alpha_2}\widetilde \rho}_{L^2} &\leq \norm{\partial^{\alpha_1}\overline{V}}_{L^\infty}\norm{\nabla \partial^{\alpha_2}\widetilde \rho}_{L^2} \\
&\leq C\left|\int(\rho_0+\omega_0)\right|\norm{\nabla g\ast\partial^{\alpha_1}\chi}_{L^\infty}\norm{\widetilde \rho}_{H^s} \\
&\leq C\left|\int(\rho_0+\omega_0)\right|\norm{\widetilde \rho}_{H^s}
\end{align*}
by Claim $(3)$ of Proposition \ref{proposition_biot_savart}. Thus we have the estimate we wanted to prove.

Now let us prove the last point of our theorem. Substracting the two continuity equations satisfied by $\widetilde \omega_1$ and $\widetilde \omega_2$, we have
\begin{equation*}
\partial_t (\widetilde \omega_1 - \widetilde \omega_2) + \div(V_1(\widetilde \omega_1 - \widetilde \omega_2)) = (V_2 - V_1)\cdot \nabla \widetilde \omega_2.
\end{equation*}
Using estimate \eqref{estimee_continuite_l2}, we have
\begin{align*}
\norm{\widetilde \omega_1 - \widetilde \omega_2}_{L^\infty_TL^2} 
&\leq c\int_0^T \norm{(V_1 - V_2)\cdot \nabla \widetilde \omega_2}_{L^2}(\tau)\dd\tau \\
&\leq CT\norm{V_1 - V_2}_{L^\infty_T L^2}\norm{\widetilde \omega_2}_{H^3}.
\end{align*}

Now we prove the last estimate we need for $\widetilde \rho_1 - \widetilde \rho_2$:
\begin{equation*}
\partial_t (\widetilde \rho_1 - \widetilde \rho_2) + \div(v_1(\widetilde \rho_1 - \widetilde \rho_2)) = (v_2 - v_1)\cdot \nabla \widetilde \rho_2 + \div(v_2 - v_1)\widetilde \rho_2.
\end{equation*}
We can bound the second term the same way that we did for the previous one:
\begin{align*}
\norm{(v_2 - v_1)\cdot \nabla \widetilde \rho_2 + \div(v_2 - v_1)\widetilde \rho_2}_{L^2} 
\leq& \norm{v_1 - v_2}_{L^2}\norm{\nabla \widetilde \rho_2}_{L^\infty} \\
&+ \norm{\div(v_1 - v_2)}_{L^2}\norm{\widetilde \rho_2}_{L^\infty} \\
&\leq 2\norm{v_1 - v_2}_{H^1}\norm{\widetilde \rho_2}_{H^3}.
\end{align*}
Thus by \eqref{estimee_continuite_l2},
\begin{align*}
\norm{\widetilde \rho_1 - \widetilde \rho_2}_{L^\infty_TL^2} 
&\leq CT \norm{\widetilde \rho_2}_{L^\infty_TH^3}\norm{v_2 - v_1}_{L^\infty_TH^1}e^{cT\norm{\div(v_1)}_{L^\infty_TH^2}} \\
&\leq CT \norm{\widetilde \rho_2}_{L^\infty_TH^3}\norm{v_2 - v_1}_{L^\infty_TH^1}e^{cT\norm{u_1}_{L^\infty_TH^3}}
\end{align*}
because $\div(v_1) = \div(u_1)$.
\end{proof}

Now let us prove lemma \ref{lemme_controle_support}:
\begin{proof}[Proof of Lemma \ref{lemme_controle_support}]
Solving the continuity equation by characteristics, we see that
\begin{equation*}
\supp(\mu(t)) = \psi^t(\supp(\mu(0)))
\end{equation*}
where $\psi$ is the flow associated to $a$. Moreover, for $x \in \supp{\mu_0}$,
\begin{align*}
|\psi^t(x)| 
&\leq |\psi^0(x)| + |\psi^t(x)-\psi^0(x)| \\
&\leq |x| + \left|\int_0^t a(\tau,\psi^\tau(x))\dd\tau\right| \\
&\leq R[\mu_0] + T\norm{a}_{L^\infty_T L^\infty}.
\end{align*}
Taking the supremum for all $x$ in $\supp(\mu_0)$, we get \eqref{controle_support}.

\end{proof}

\subsection{Monokinetic spray System}\label{subsection_EEP}

In this section we prove the well-posedness result of system \eqref{edp}, that is Theorem \ref{main_theorem_well_posedness}: 
\begin{proof}[Proof of Theorem \ref{main_theorem_well_posedness}]
Let $(\rho_0,\omega_0) \in H^s$, $u_0 \in H^{s+1}$ and $\chi$ be a smooth function with compact support such that $\int \chi = 1$. We recall that we have defined
\begin{equation*}
M_0 := \max(\norm{\rho_0}_{H^s},\norm{\omega_0}_{H^s},\norm{u_0}_{H^{s+1}}), \; R_0 := R[\rho_0 + \omega_0]
\end{equation*}
and
\begin{align*}
X_T := \; \bigg\{ &(\omega,\rho) \in  L^\infty_T H^s \cap C_T H^{s-1} \bigg | \omega(0) = \omega_0, \rho(0) = \rho_0, \\
&\norm{\rho}_{L^\infty_T H^s} \leq 2M_0, \norm{\omega}_{L^\infty_T H^s} \leq 2M_0, R_T[\rho + \omega] \leq 2 R_0, \\
&\forall t \in [0,T], \int(\rho(t)+\omega(t)) = \int (\rho_0+\omega_0), \\
&\forall t,t' \in [0,T], \norm{\rho(t) - \rho(t')}_{H^{s-1}} \leq L|t-t'|, \\ 
&\norm{\omega(t) - \omega(t')}_{H^{s-1}} \leq L|t-t'|, \bigg\}
\end{align*}
with $L > 0$ that we will fix later. Let us justify that $X_T$ is the complete metric space for the distance
\begin{align*}
d((\rho_1,\omega_1),(\rho_2,\omega_2)) := \norm{\rho_1 - \rho_2}_{L^\infty_TL^2} + \norm{\omega_1 - \omega_2}_{L^\infty_TL^2}.
\end{align*}
It is sufficient to prove that $X_T$ is closed in $(L_T^\infty L^2)^2$. Let us consider a sequence of functions $(\rho_N,\omega_N)$ in $X_T$ and $(\rho,\omega) \in (L^\infty_T L^2)^2$ such that 
\begin{equation*}
d((\rho_N,\omega_N),(\rho,\omega)) \Tend{N}{\infty} 0
\end{equation*}
and prove that $(\rho,\omega) \in X_T$. By Banach-Alaoglu's theorem, since $H^s$ is a Hilbert space, for almost every time there exists a subsequence $\rho_{\phi_t(n)}(t)$ that converges weakly in $H^s$. Thus by uniqueness of the limit in weak $L^2$  $\rho_{\phi_t(n)}(t)$ converges weakly to $\rho(t)$ for almost every $t \in [0,T]$. By lower semi-continuity of the $H^s$ norm we get that
\begin{equation}\label{X_ferme_estimee_1a}
\norm{\rho}_{L^\infty_T H^s} \leq 2M_0.
\end{equation}
By the same kind of argument we can prove that
\begin{equation}\label{X_ferme_estimee_1b}
\norm{\omega}_{L^\infty_T H^s} \leq 2M_0.
\end{equation}
and that for all $t,t' \in [0,T]$
\begin{equation}\label{X_ferme_estimee_2}
\begin{aligned}
\norm{\rho(t) - \rho(t')}_{H^{s-1}} &\leq L|t-t'| \\ 
\norm{\omega(t) - \omega(t')}_{H^{s-1}} &\leq L|t-t'|.
\end{aligned}
\end{equation}
As a consequence $\rho$ and $\omega$ are continuous in time with value in $H^{s-1}$ and thus 
\begin{equation}\label{X_ferme_estimee_3}
\begin{aligned}
\omega(0) &= \omega_0 \\
\rho(0) &= \rho_0.
\end{aligned}
\end{equation}
Moreover for all $t \in [0,T]$,
\begin{equation*}
\int \mathbf{1}_{B(0,2R_0)}(\rho_N^2(t)+\omega_N^2(t)) \Tend{N}{+\infty} \int \mathbf{1}_{B(0,2R_0)}(\rho^2(t)+\omega^2(t)) = 0
\end{equation*}
by strong convergence in $L^2$. Thus $\rho$ and $\omega$ have compact support and
\begin{equation}\label{X_ferme_estimee_4}
R[\rho + \omega] \leq 2R_0.
\end{equation}
Finally, compact support and convergence in $L^2$ implies convergence in $L^1$ so we get that for every $t \in [0,T]$,
\begin{equation}\label{X_ferme_estimee_5}
\int(\rho(t)+\omega(t)) = \int (\rho_0+\omega_0).
\end{equation}
Inequalities \eqref{X_ferme_estimee_1a}, \eqref{X_ferme_estimee_1b}, \eqref{X_ferme_estimee_2}, \eqref{X_ferme_estimee_3}, \eqref{X_ferme_estimee_4} and \eqref{X_ferme_estimee_5} gives us that $(\rho,\omega) \in X_T$, so $X_T$ is closed in $L^\infty_T L^2$.

Now let us build a contraction $X_T \longrightarrow X_T$. For $(\rho,\omega) \in X_T$ fixed, we have defined
\begin{itemize}
\item $V := -\nabla^\bot g \ast (\rho + \omega)$
\item $\overline{V} := -\left(\int\rho_0+\omega_0\right)\nabla^\bot g \ast \chi$
\item $f := (V - \overline{V})^\bot - (\overline{V}\cdot\nabla) \overline{V}$.
\end{itemize}

By Corollary \ref{corollaire_controle_f_V_bar}, $f \in L^\infty_T H^{s+1}\cap \mathcal{C}_T H^s$. Let $T_1$ be sufficiently small so that Theorem \ref{well_posedness_presureless_euler} can be applied and $u$ be the solution of \eqref{euler_equation_V_bar} given by this theorem, $v = u + \overline{V}$, and $(\widetilde \rho,\widetilde \omega)$ be the solution of \eqref{transport_lineaire} given by Theorem \ref{well_posedness_linear_continuity}. According to Theorem \ref{well_posedness_presureless_euler}, the smallness of $T_1$ depends on $M_0$ and $R_0$. Now let us justify that for small enough $T_2 \leq T_1$, we have $(\widetilde \rho,\widetilde \omega) \in X_{T_2}$. By Theorem \ref{well_posedness_presureless_euler}, we have the following estimates:
\begin{align*}
\norm{\widetilde \rho}_{L^\infty_{T_1} H^s} &\leq \norm{\rho_0}_{H^s}e^{cT_1\norm{u}_{L^\infty_{T_1} H^s}}\exp\left(ce^{cT_1\norm{u}_{L^\infty_{T_1} H^s}}T_1\norm{\nabla v}_{L^\infty_{T_1} H^s}\right) \\
\norm{\widetilde \omega}_{L^\infty_{T_1} H^s} &\leq \norm{\omega_0}e^{cT_1\norm{\nabla V}_{L^\infty_{T_1} H^s}}.
\end{align*}
Remark that
\begin{equation*}
\norm{\nabla V}_{L^\infty_{T_1} H^s} \leq C\norm{\rho + \omega}_{L^\infty_{T} H^s} \leq 4CM_0
\end{equation*}
by Claim $(2)$ of Proposition \ref{proposition_biot_savart}. Moreover by Claim $(2)$ of Proposition \ref{proposition_biot_savart} and Theorem \ref{well_posedness_presureless_euler},
\begin{align*}
\norm{\nabla v}_{L^\infty_{T_1} H^s}
&\leq C(\norm{u}_{L^\infty_{T_1} H^{s+1}} + \norm{\nabla \overline{V}}_{L^\infty_{T_1} H^s}) \\
&\leq C(2M_0 + CR_0M_0) \\
&\leq C(1+R_0)M_0.
\end{align*}
Thus $\norm{\widetilde \rho}_{L^\infty_{T_2} H^s} \leq 2M_0$ and $\norm{\widetilde \omega}_{L^\infty_{T_2} H^s} \leq 2M_0$ if $T_2 \leq T_1$ and $T_2$ small enough with respect to $M_0$ and $R_0$. Now, by Lemma \ref{lemme_controle_support} and Claim $(3)$ of Proposition \ref{proposition_biot_savart}, if $0 \leq t \leq T_2$ we have
\begin{align*}
R[\widetilde \rho(t) + \widetilde \omega(t)] &\leq R[\widetilde \rho(t)] + R[\widetilde \omega(t)] \\
&\leq R_0 + t(\norm{v}_{L^\infty} + \norm{V}_{L^\infty}) \\
&\leq R_0 + t(\norm{u}_{H^s} + \norm{\overline{V}}_{L^\infty} + \norm{V}_{L^\infty}) \\
&\leq R_0 + t(2M_0 +  C\left|\int \rho_0 + \omega_0\right| + CR_{T_2}[\rho+\omega]^\frac{1}{3}\norm{\rho+\omega}_{H^1}) \\
&\leq R_0 + T_2(2M_0 + 2CR_0M_0 + 4CR_0^\frac{1}{3}M_0) \\
&\leq 2R_0
\end{align*}
if $T_2$ is small enough with respect to $R_0$ and $M_0$. By Theorem 
\ref{well_posedness_linear_continuity}, we have: 
\begin{align*}
\norm{\partial_t \widetilde \omega}_{L^\infty_{T_2}H^{s-1}} 
&\leq C(1+R_{T_2}[\rho + \omega]^\frac{1}{3})\norm{\rho + \omega}_{L^\infty_{T_2}H^s}\norm{\widetilde \omega}_{L^\infty_{T_2}H^s} \\
&\leq C(1+(2R_0)^\frac{1}{3})4M_02M_0 \\
\norm{\partial_t \widetilde \rho}_{L^\infty_{T_2}H^{s-1}} 
&\leq C\left(\left|\int \rho_0 + \omega_0\right|+\norm{u}_{L^\infty_{T^2}H^{s+1}}\right)\norm{\widetilde \rho}_{L^\infty_{T_2}H^s} \\
&\leq C\big(2R_0M_0+2M_0\big)2M_0.
\end{align*} 
Choosing $L$ large enough (with respect to $M_0$ and $R_0$), we have 
\begin{align*}
\norm{\partial_t \widetilde \omega}_{L^\infty_{T_2}H^{s-1}} &\leq L \\
\norm{\partial_t \widetilde \rho}_{L^\infty_{T_2}H^{s-1}} &\leq L.
\end{align*} 
Thus we have built a map $\Phi : (\rho,\omega) \mapsto (\widetilde \rho, \widetilde \omega)$ such that $\Phi(X_{T_2}) \subset X_{T_2}$. We will now prove that $\Phi$ is a contraction for the $L^\infty_{T_2}L^2$ norm.

Let $(\rho_1,\omega_1), (\rho_2,\omega_2) \in X_{T_2}$, $(\widetilde \rho_1,\widetilde \omega_1) = \Phi(\rho_1,\omega_1)$ and $(\widetilde \rho_2,\widetilde \omega_2) = \Phi(\rho_2,\omega_2)$. By Theorem \ref{well_posedness_linear_continuity}, we have
\begin{equation*}
\norm{\widetilde \omega_1 - \widetilde \omega_2}_{L^\infty_{T_2}L^2} 
\leq  CT_2\norm{V_1 - V_2}_{L^\infty_{T_2} L^2}\norm{\widetilde \omega_2}_{L^\infty_{T_2}H^3}
\end{equation*}
and
\begin{align*}
\norm{\widetilde\rho_1 - \widetilde\rho_2}_{L^\infty_{T_2} L^2} 
&\leq CT_2 \norm{\widetilde \rho_2}_{L^\infty_{T_2}H^3}\norm{v_2 - v_1}_{L^\infty_{T_2}H^1}e^{cT\norm{u_1}_{L^\infty_{T_2}H^3}}.
\end{align*}
Moreover, by Theorem \ref{well_posedness_presureless_euler}:
\begin{equation*}
\norm{v_2 - v_1}_{L^\infty_{T_2} H^1} = \norm{u_2 - u_1}_{L^\infty_{T_2} H^1} \leq Ce^{C(M_0,R_0)T_2}\norm{V_1 - V_2}_{L^\infty_{T_2} H^1}
\end{equation*}
Thus
\begin{align*}
\norm{\widetilde \omega_1 - \widetilde \omega_2}_{L^\infty_{T_2}L^2} &+ \norm{\widetilde\rho_1 - \widetilde\rho_2}_{L^\infty_{T_2}L^2} \\
\leq& \; 2CT_2M_0 \norm{V_1 - V_2}_{L^\infty_{T_2} L^2} \\
&+ 2CT_2M_0e^{2cT_2M_0}Ce^{C(M_0,R_0)T_2}\norm{V_1 - V_2}_{L^\infty_{T_2} H^1} \\
\leq& \; C(M_0,R_0,T)T_2\norm{V_1 - V_2}_{L^\infty_{T_2} H^1}
\end{align*}
for any $T_2 \leq T_1 \leq T$. Moreover,
\begin{align*}
&\norm{V_1 - V_2}_{L^\infty_{T_2} H^1} \\
&\leq \norm{V_1 - V_2}_{L^\infty_{T_2} L^2} + \norm{\nabla(V_1 - V_2)}_{L^\infty_{T_2} L^2} \\
&\leq C(1+(4R_0 + (4R_0)^3)^\frac{1}{2})\norm{\rho_1 + \omega_1 - \rho_2 - \omega_2}_{L^\infty_{T_2}L^2}
\end{align*}
by Claims $(2)$ and $(6)$ of Proposition \ref{proposition_biot_savart}. Thus $\Phi$ is a contraction if $T_2$ is small enough (with respect to $M_0$ and $R_0$), so it has a unique fixed point $(\rho,\omega) \in X_{T_2}$.
\end{proof}

\section{Mean-field limit}\label{section:3}

In this section we prove Proposition \ref{asmptotic_positivity_energy}, Theorem \ref{theoreme_champ_moyen}, Proposition \ref{proposition_coerciveness_energy} and Proposition \ref{proposition_donnees_initiales_bien_preparees}. Let us begin by proving Proposition \ref{asmptotic_positivity_energy}. 

\subsection{Proof of Proposition \ref{asmptotic_positivity_energy}}

For $0 < \eta < 1$ we define
\begin{equation*}
g^{(\eta)}(x) =
\left\{
\begin{aligned}
&-\frac{1}{2\pi}\ln(\eta) \qquad &\text{if} \; |x| \leq \eta \\
&g(x) \qquad &\text{if} \; |x| \geq \eta
\end{aligned}\right.
\end{equation*}
and we denote $\delta_{y}^{(\eta)}$ the uniform probability measure on the circle of center $y$ and radius $\eta$. We have the following lemma:

\begin{lemma}\label{egalite_convolution_g_eta}
For any $0 < \eta < 1$ and $y \in \mathbb{R}^2$,
\begin{equation*}
\int g(x-z) \dd \delta_y^{(\eta)}(z) = g^{(\eta)}(x-y)
\end{equation*}
\end{lemma}

\begin{proof}
By a change of variable we may assume that $y = 0$. The function
\begin{equation*}
f(x) = \int_{\partial B(0,\eta)} g(x-z) \dd \delta_0^{(\eta)}(z) 
\end{equation*}
is locally bounded and satisfies $\Delta f = -\delta_0^{(\eta)} = \Delta g^{(\eta)}$. Now if $|x| \geq \eta$, we have 

\begin{align*}
\int_{\partial B(0,\eta)} g(x-z)\dd \delta_0^{(\eta)}(z)  - g^{(\eta)}(x) &= \int_{\partial B(0,\eta)}(g(x-z)-g(x))\dd \delta_0^{(\eta)}(z) \\
&= \int_{\partial B(0,\eta)}g\bigg(\frac{x}{|x|}-\frac{z}{|x|}\bigg)\dd \delta_0^{(\eta)}(z) \\
&\Tend{|x|}{\infty} \int_{\partial B(0,\eta)} -\frac
{1}{2\pi}\ln(1) = 0
\end{align*}
by dominated convergence theorem. Therefore $f - g^{(\eta)}$ is a harmonic bounded function so it is constant. Since $f(z) = g(\eta) = g^{(\eta)}(z)$ for any $z$ of norm $\eta$, we get that $f = g^{(\eta)}$. 
\end{proof}

Integrating by parts, since $\displaystyle{\int \omega-\omega_N = 0}$, we have 
\begin{equation}\label{equality_kinetic_energy_mod_energy}
\norm{\nabla g\ast(\omega-\omega_N)}^2_{L^2} = \iint_{\mathbb{R}^2\times\mathbb{R}^2} g(x-y)(\omega-\omega_N)(x)(\omega-\omega_N)(y) \dd x \dd y.
\end{equation}
For a more detailed justification of such integrations by parts we refer to \cite[Equality (1.23)]{Serfaty}. Therefore we only need to justify Inequality \eqref{lower_bound_potential_energy} to get that $\mathcal{H} \geq 0$.  For that purpose we define
\begin{equation*}
\rho_N^{(\eta)} := \frac{1}{N}\sum_{i=1}^N \delta_{q_i}^{(\eta)}. 
\end{equation*}
We have
\begin{align*}
\iint_{(\mathbb{R}^2\times\mathbb{R}^2)\backslash \Delta} &g(x-y)(\rho + \omega - \rho_N - \omega_N)^{\otimes 2}(\dd x\dd y) \\
=& \iint_{(\mathbb{R}^2\times\mathbb{R}^2)} g(x-y)(\rho + \omega - \rho_N^{(\eta)} - \omega_N)^{\otimes 2}(\dd x\dd y) \\
&+ \iint_{(\mathbb{R}^2\times\mathbb{R}^2)\backslash \Delta}g(x-y)(\dd \rho_N(x)\dd\rho_N(y)-\dd \rho_N^{(\eta)}(x)\dd\rho_N^{(\eta)}(y)) \\
&+2\iint_{(\mathbb{R}^2\times\mathbb{R}^2)} g(x-y)(\omega-\omega_N+\rho)(x)\dd x \dd(\rho_N^{(\eta)}-\rho_N)(y) \\
=& L_1 + L_2 + L_3.
\end{align*}
Integrating by parts the first line we find that
\begin{equation*}
L_1 = \int |\nabla g \ast(\rho + \omega - \rho_N^{(\eta)} - \omega_N)|^2 \geq 0. 
\end{equation*}
For the second line, by Lemma \ref{egalite_convolution_g_eta} we have
\begin{equation*}
L_2 = \frac{1}{N^2}\sum_{1 \leq i \neq j \leq N}\int_{\mathbb{R}^2} (g(q_i - q_j) - g^{(\eta)}(q_i - y))\dd(\delta_{q_i} + \delta_{q_i}^{(\eta)})(y)\dd y 
\end{equation*}
This quantity have been bounded in \cite[Inequality 2.14]{NguyenRosenzweigSerfaty} so we get
\begin{equation*}
L_2 \geq -\frac{C}{N}\sum_{i=1}^N \eta_i^2.
\end{equation*}   
Finally,
\begin{align*}
|L_3| \leq C|g\ast(\omega-\omega_N+\rho)|_{C^\gamma}\eta^\gamma
\end{align*}
so by Morrey's inequality (see for example \cite[Theorem 9.12]{Brezis}) and Hardy-Littlewood-Sobolev inequality (see for example \cite[Theorem 1.7]{BahouriCheminDanchin}) for some $p > 2$ we have
\begin{align*}
|L_3| &\leq C\norm{\nabla g\ast(\omega-\omega_N+\rho)}_{L^p}\eta^\gamma \\
&\leq C\norm{\omega-\omega_N+\rho}_{L^\frac{2p}{p+2}} \\
&\leq C(\norm{\omega}_{L^1 \cap L^\infty} + \norm{\omega_N}_{L^1 \cap L^\infty}+\norm{\rho}_{L^1 \cap L^\infty})\eta^\gamma.
\end{align*}
We get Inequality \eqref{lower_bound_potential_energy} by taking $\eta = N^{-1}$.

\subsection{Proof of Theorem \ref{theoreme_champ_moyen}}
We want to compute the derivative of the functional $\mathcal{H}_N = T_1 + T_2 + T_3 + T_4 + T_5 + T_6 + T_7$ defined in \eqref{definition_H_N}. We will denote $\alpha := \omega + \rho$ and $\alpha_N := \omega_N +\rho_N$.
\begin{align*}
T_1 &:= \frac{1}{N}\sum_{i=1}^N |v(q_i) - p_i|^2 \\
T_2 &:=  \int_{\mathbb{R}^2\times\mathbb{R}^2\backslash \Delta} g(x-y)\alpha_N(t,\dd x)\alpha_N(t,\dd y) \\
T_3 &:= \int_{\mathbb{R}^2\times\mathbb{R}^2\backslash \Delta} g(x-y)\alpha(t,x)\alpha(t,y)\dd x\dd y \\
T_4 &:= -2\int_{\mathbb{R}^2\times\mathbb{R}^2\backslash \Delta} g(x-y)\alpha(t,x)\dd x\dd \alpha_N(t,y)\\
T_5 &:= \norm{\omega(t)-\omega_N(t)}^2_{L^2} \\
T_6 &:= \iint_{\mathbb{R}^2\times\mathbb{R}^2}g(x-y)(\omega-\omega_N)(x)(\omega-\omega_N)(y) \dd x \dd y \\
T_7 &= BN^{-\gamma}.
\end{align*}

\begin{claim}\label{derivee_H_termes_1}
For every $t \in [0,T]$, we have
\begin{align*}
\frac{\dd T_1}{\dd t} 
=& -\frac{2}{N}\sum_{i=1}^N\nabla v(q_i):(v(q_i)-p_i)^{\otimes 2} \\
&- \frac{2}{N}\sum_{i=1}^N p_i\cdot\left(\frac{1}{N}\underset{j\neq i}{\sum_{j=1}^N}\nabla g(q_i-q_j) + \nabla g\ast(\omega_N-\alpha)(q_i)\right) \\
&+ 2\iint_{\mathbb{R}^2\times\mathbb{R}^2\backslash \Delta}v(t,x)\cdot \nabla g(x-y) \rho_N(t,\dd x)(\alpha_N - \alpha)(t,\dd y) \\
=:& \; T_{1,1} + T_{1,2} + T_{1,3}.
\end{align*}
\end{claim}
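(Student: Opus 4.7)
The plan is to differentiate $T_1$ directly via the chain rule, substitute the evolution equations for $v$, $q_i$ and $p_i$ from \eqref{edp} and \eqref{edo}, and reorganize the resulting expression into the three pieces $T_{1,1}, T_{1,2}, T_{1,3}$. Since $\dot q_i = p_i$, the chain rule gives
$$\tfrac{d}{dt}\,|v(t,q_i(t)) - p_i(t)|^2 = 2\,(v(q_i) - p_i) \cdot \bigl(\partial_t v(q_i) + (p_i \cdot \nabla) v(q_i) - \dot p_i\bigr).$$
Substituting $\partial_t v = -(v\cdot\nabla)v + (v-V)^\bot$ from \eqref{edp} and $\dot p_i = p_i^\bot - \nabla g * \omega_N(q_i) - \frac{1}{N}\sum_{j\neq i}\nabla g(q_i - q_j)$ from \eqref{edo}, the two convective contributions collapse to $((p_i - v(q_i))\cdot \nabla) v(q_i)$. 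Pairing with $(v(q_i) - p_i)$ produces $-\nabla v(q_i):(v(q_i)-p_i)^{\otimes 2}$, which after averaging over $i$ yields exactly $T_{1,1}$.

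The delicate step is the gyroscopic contribution $(v(q_i) - V(q_i))^\bot - p_i^\bot = (v(q_i) - p_i - V(q_i))^\bot$. Setting $a = v(q_i) - p_i$ and using the pointwise identity $a \cdot a^\bot = 0$, the inner product collapses to $-a \cdot V(q_i)^\bot$. Now from $V = -\nabla^\bot g * \alpha$ (writing $\alpha = \omega + \rho$) one checks directly that $V^\bot = \nabla g * \alpha$, so this term equals $-(v(q_i)-p_i)\cdot\nabla g*\alpha(q_i)$. Combined with the two gradient terms coming from $\dot p_i$, after summation over $i$ this contributes
$$\frac{2}{N}\sum_{i=1}^N (v(q_i) - p_i)\cdot\Bigl[\nabla g*(\omega_N - \alpha)(q_i) + \tfrac{1}{N}\sum_{j\neq i}\nabla g(q_i - q_j)\Bigr].$$

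Finally, I would split the factor $(v(q_i)-p_i)$ into its $-p_i$ and $v(q_i)$ parts. The $-p_i$ portion is exactly $T_{1,2}$ as stated. For the $v(q_i)$ portion, I observe that since $\rho_N = \frac{1}{N}\sum_i \delta_{q_i}$, averaging $v(q_i)\cdot(\cdot)$ over $i$ is integration against $\rho_N$ in the first variable, while the bracket coincides with $\int_{y \neq q_i} \nabla g(q_i - y)\,(\alpha_N - \alpha)(\dd y)$: the smooth densities $\omega_N,\omega,\rho$ are insensitive to removing the diagonal, whereas the atomic part $\rho_N$ on $\{y \neq q_i\}$ precisely produces the $j \neq i$ sum. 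This identifies the remainder with $T_{1,3}$. The only real obstacle is the bookkeeping around the gyroscopic cancellation: once $a\cdot a^\bot = 0$ and $V^\bot = \nabla g *\alpha$ have been exploited, the equality of the pointwise interaction sum with the off-diagonal double integral against $\rho_N \otimes (\alpha_N - \alpha)$ is then routine.
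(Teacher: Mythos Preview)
Your proposal is correct and follows essentially the same approach as the paper: differentiate by the chain rule, substitute the equations from \eqref{edp} and \eqref{edo}, use $a\cdot a^\bot=0$ together with $V^\bot=\nabla g*\alpha$ to collapse the gyroscopic terms, and then split $v(q_i)-p_i$ into its $v(q_i)$ and $-p_i$ parts. The only difference is presentational---the paper absorbs $-V^\bot$ directly into $\nabla g*(\omega_N-\alpha)$ in one step rather than isolating it first---so there is nothing to add.
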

\begin{proof}[Proof of Claim \ref{derivee_H_termes_1}]
Since $v \in \mathcal{C}^1([0,T]\times\mathbb{R}^2,\mathbb{R}^2)$ we can compute
\begin{align*}
\frac{\dd T_1}{\dd t} 
=& \frac{2}{N}\sum_{i=1}^N(v(q_i)-p_i)\cdot(\partial_t v(q_i) + (p_i\cdot \nabla) v(q_i) - \dot p_i) \\
=& \frac{2}{N}\sum_{i=1}^N(v(q_i)-p_i)\cdot \bigg(-(v\cdot \nabla) v(q_i) + (v-V)^\bot(q_i) \\
&+ (p_i\cdot \nabla) v(q_i) - p_i^\bot + \nabla g \ast \omega_N(q_i) +  \frac{1}{N}\underset{j\neq i}{\sum_{j=1}^N}\nabla g(q_i-q_j)\bigg) \\
=& \frac{2}{N}\sum_{i=1}^N(v(q_i)-p_i)\cdot \bigg(((p_i-v(t,q_i))\cdot \nabla) v(q_i) + (v(q_i) - p_i)^\bot \\ &+\nabla g \ast(\omega_N-\alpha)(q_i) +  \frac{1}{N}\underset{j\neq i}{\sum_{j=1}^N}\nabla g(q_i-q_j)\bigg) \\
=& -\frac{2}{N}\sum_{i=1}^N\nabla v(q_i):(v(q_i)-p_i)^{\otimes 2} \\ 
&+ \frac{2}{N}\sum_{i=1}^N(v(q_i)-p_i)\cdot\bigg(\nabla g\ast(\omega_N-\alpha)(q_i)
+ \frac{1}{N}\underset{j\neq i}{\sum_{j=1}^N}\nabla g(q_i-q_j)\bigg) \\ 
=& -\frac{2}{N}\sum_{i=1}^N\nabla v(q_i):(v(q_i)-p_i)^{\otimes 2} \\
&- \frac{2}{N}\sum_{i=1}^N p_i\cdot\bigg(\frac{1}{N}\underset{j\neq i}{\sum_{j=1}^N}\nabla g(q_i-q_j) + \nabla g\ast(\omega_N-\alpha)(q_i)\bigg) \\
&+ \frac{2}{N}\sum_{i=1}^N v(q_i)\cdot\bigg(\frac{1}{N}\underset{j\neq i}{\sum_{j=1}^N}\nabla g(q_i-q_j) + \nabla g\ast(\omega_N-\alpha)(q_i)\bigg) \\
=& -\frac{2}{N}\sum_{i=1}^N\nabla v(q_i):(v(q_i)-p_i)^{\otimes 2} \\
&- \frac{2}{N}\sum_{i=1}^N p_i\cdot\bigg(\frac{1}{N}\underset{j\neq i}{\sum_{j=1}^N}\nabla g(q_i-q_j) + \nabla g\ast(\omega_N-\alpha)(q_i)\bigg) \\
&+ 2\iint_{\mathbb{R}^2\times\mathbb{R}^2\backslash \Delta}v(t,x)\cdot \nabla g(x-y) \rho_N(t,\dd x)\dd(\alpha_N - \alpha)(t,y) \\
=&  T_{1,1} + T_{1,2} + T_{1,3}.
\end{align*}
\end{proof}

In the incoming computations, we will find some terms which look like $T_{1,2}$, that is, terms depending on $p_i$ (which will cancel out) or like $T_{1,3}$, that is terms of the form:
\begin{equation*}
\iint_{\mathbb{R}^2\times\mathbb{R}^2\backslash \Delta}A(t,x)\cdot \nabla g(x-y) \dd \mu(x)\dd\nu(y)
\end{equation*}
with $A$ a smooth vector field (for example $v$ or $V$) and $\mu$, $\nu$ some signed finite measures (for example $\alpha$ or $\rho - \rho_N$). We will finish our computations grouping all terms corresponding to the same vector field $A$. Let us now compute the time derivative of $T_2$. Notice that the energy
\begin{equation*}
\mathcal{E}_N = \frac{1}{N}\sum_{i=1}^N |p_i|^2 + \iint_{\mathbb{R}^2\times\mathbb{R}^2\backslash \Delta} g(x-y)\dd\alpha_N(t,x)\dd\alpha_N(t,y)
\end{equation*}
of System \eqref{edo} is constant in time (for more details see  \cite[Proposition 5.1]{LacaveMiot}). Thus we have
\begin{equation*}
T_2 = \mathcal{E}_N  - \frac{1}{N}\sum_{i=1}^N |p_i|^2
\end{equation*}
and
\begin{equation}
\label{derivee_H_termes_2}
\begin{aligned}
\frac{\dd T_2}{\dd t}
&= - \frac{2}{N}\sum_{i=1}^N p_i \cdot\bigg(p_i^\bot - \nabla g \ast \omega_N(q_i) - \frac{1}{N}\underset{j \neq i}{\sum_{j=1}^N}\nabla g(q_i - q_j)\bigg) \\
&= \frac{2}{N}\sum_{i=1}^N p_i \cdot\bigg(\nabla g \ast \omega_N(q_i) + \frac{1}{N}\underset{j \neq i}{\sum_{j=1}^N}\nabla g(q_i - q_j)\bigg) \\
&=: T_{2,1}.
\end{aligned}
\end{equation}

Let us compute the time derivative of the third term:
\begin{claim}\label{derivee_H_termes_3}
$T_3 \in W^{1,\infty}([0,T])$ and for almost every $t \in [0,T]$, we have
\begin{align*}
\frac{\dd T_3}{\dd t} 
&=2 \iint_{\mathbb{R}^2\times\mathbb{R}^2}v(t,x)\cdot\nabla g(x-y)\rho(t,x)\alpha(t,y)\dd x\dd y. \\
&=: T_{3,1}.
\end{align*}
\end{claim}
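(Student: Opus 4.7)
The plan is to compute $\frac{\dd T_3}{\dd t}$ by exploiting the symmetry $g(x-y) = g(y-x)$ to reduce to a single time derivative $\partial_t\alpha$, substituting the evolution equation for $\alpha = \omega + \rho$ obtained by summing the two continuity equations in \eqref{edp}, integrating by parts in $x$, and finally observing that the gyroscopic structure forces the contribution from $\omega V$ to vanish identically.

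\textbf{Formal computation and the gyroscopic cancellation.} Since $\alpha(t,\cdot) \in L^1 \cap L^\infty$ has compact support, the diagonal $\Delta$ has zero $\alpha(t,\cdot)\otimes\alpha(t,\cdot)$-measure and we may write $T_3(t) = \iint_{\mathbb{R}^2 \times \mathbb{R}^2} g(x-y)\alpha(t,x)\alpha(t,y)\dd x \dd y$. By the symmetry of $g$ and the continuity equation $\partial_t \alpha = -\div(\omega V + \rho v)$, one formally has
\begin{equation*}
\frac{\dd T_3}{\dd t} = -2 \iint g(x-y)\div_x(\omega V + \rho v)(t,x)\alpha(t,y)\dd x\dd y = 2 \iint \nabla g(x-y)\cdot(\omega V + \rho v)(t,x)\alpha(t,y)\dd x\dd y.
\end{equation*}
The key observation is that the $\omega V$ contribution vanishes. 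Indeed, since $V = -\nabla^\bot g * \alpha$ and $(\nabla^\bot)^\bot = -\nabla$, one has $(\nabla g) * \alpha = V^\bot$, and therefore
\begin{equation*}
\iint \nabla g(x-y)\cdot V(t,x)\,\omega(t,x)\alpha(t,y)\dd x\dd y = \int \omega(t,x)\,V(t,x)\cdot V^\bot(t,x)\dd x = 0,
\end{equation*}
because $V\cdot V^\bot \equiv 0$ pointwise. What remains is exactly the stated expression for $T_{3,1}$.

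\textbf{Rigorous justification and regularity.} To make the integration by parts rigorous despite the singularity of $g$, I would mollify: set $g_\epsilon = g * \chi_\epsilon$, define $T_3^\epsilon$ with $g$ replaced by $g_\epsilon$, and apply the weak formulation of Definition \ref{def_sol_faible_edp} for $\alpha$ with the admissible test function $\phi(t,y) = \eta_R(y) (g_\epsilon * \alpha(t))(y)$, where $\eta_R \in \mathcal{C}^\infty_c(\mathbb{R}^2)$ equals $1$ on a ball containing $\supp \alpha(t)$ for every $t \in [0,T]$. One then lets $\epsilon \to 0$ by dominated convergence, using the uniform bounds on $g * \alpha$ and $\nabla g * \alpha$ on the fixed compact provided by Proposition \ref{proposition_biot_savart}. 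The $W^{1,\infty}([0,T])$ regularity of $T_3$ follows because the resulting right-hand side is uniformly bounded in $t$ (since $v \in W^{1,\infty}$, $V \in L^\infty$ on the relevant compact, and $\alpha \in L^\infty$). The main obstacle is precisely this limiting argument: $\nabla g$ has a non-integrable tail and a $|x|^{-1}$ singularity at the origin, so one must control both the commutator of the mollification with the cutoff $\eta_R$ (harmless because $\supp\alpha$ lies strictly inside $\{\eta_R = 1\}$) and the passage to the limit in the Biot--Savart-type convolutions, which is where Proposition \ref{proposition_biot_savart} is indispensable.
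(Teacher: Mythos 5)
Your proposal is correct and follows essentially the same route as the paper's proof: mollify $g$, replace $g$ by the mollified kernel in $T_3$ via dominated convergence, use the (mollified) convolution $g_\eta * \alpha$ as an admissible test function in the weak formulation of Definition \ref{def_sol_faible_edp}, pass to the limit by dominated convergence, and invoke the gyroscopic identity $\nabla g * \alpha = \pm V^\bot$ (so $V \cdot V^\bot = 0$) to kill the $\omega V$ contribution. The only cosmetic differences are that the paper uses a truncation-type family $g_\eta$ (agreeing with $g$ outside $B(0,\eta)$, with $|\nabla g_\eta|\leq C/|x|$) rather than a convolution mollifier, and the paper establishes separately that $g_\eta * \alpha \in W^{1,\infty}([0,T],\mathcal{C}^1)$ before using it as a test function — a preliminary step you allude to with the "uniform bounds from Proposition \ref{proposition_biot_savart}" but do not spell out; also note the opposite sign convention ($\nabla g * \alpha = -V^\bot$ in the paper) is immaterial since both give $V\cdot V^\bot = 0$.
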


\begin{proof}[Proof of Claim \ref{derivee_H_termes_3}]
Let $(g_\eta)_{0 < \eta < 1}$ be a family of smooth functions such that
\begin{itemize}
\item $g_\eta(x) = g(x) \quad \textrm{if} \quad |x| \geq \eta$,
\item $|g_\eta(x)| \leq |g(x)|$,
\item $\displaystyle{|\nabla g_\eta(x)| \leq \frac{C}{|x|}}$.
\item $g_\eta(-x) = g_\eta(x)$.
\end{itemize}
For $0 \leq s,t \leq T$ and $0 < \eta < 1$, we have
\begin{equation*}
T_3(t) -T_3(s) 
= \iint g(x-y)(\alpha(t,x)\alpha(t,y) - \alpha(s,x)\alpha(s,y))\dd x\dd y.
\end{equation*}
Remark that
\begin{align*}
\iint |&g_\eta(x-y)(\alpha(t,x)\alpha(t,y) - \alpha(s,x)\alpha(s,y))|\dd x\dd y \\
&\leq \iint |g(x-y)|(\alpha(t,x)\alpha(t,y) - \alpha(s,x)\alpha(s,y))|\dd x\dd y < +\infty
\end{align*}
because $\alpha$ has compact support. Thus by dominated convergence theorem we have
\begin{equation}\label{limite_T3_1}
T_3(t) -T_3(s) 
= \underset{\eta \rightarrow 0}{\lim}\iint g_\eta(x-y)(\alpha(t,x)\alpha(t,y) - \alpha(s,x)\alpha(s,y))\dd x\dd y.
\end{equation}
Since $g_\eta$ is smooth and $\alpha$ has compact support, we have by \eqref{def_solution_faible_eq_continuite} that
\begin{equation*}
\int g_\eta(x-y)(\alpha(t,x)-\alpha(s,x))\dd x = \int_s^t\int(\rho v + \omega V)(\tau,x)\cdot \nabla g_\eta(x-y)\dd x\dd \tau
\end{equation*}
Since $(\rho v + \omega V)\cdot\ast\nabla g_\eta \in L^\infty([0,T],\mathcal{C}^1(\mathbb{R}^2,\mathbb{R}))$, we get from the upper equation that $g_\eta \ast \alpha \in W^{1,\infty}([0,T],\mathcal{C}^1(\mathbb{R}^2,\mathbb{R}))$ and that for almost every $t \in [0,T]$,
\begin{equation*}
\partial_t(g_\eta\ast\alpha) = -(\rho v+ \omega V)\cdot\ast\nabla g_\eta.
\end{equation*}
Thus we can use $g_\eta\ast\alpha$ as a test function in \eqref{def_solution_faible_eq_continuite} to get
\begin{align*}
\iint &g_\eta(x-y)(\alpha(t,x)\alpha(t,y) - \alpha(s,x)\alpha(s,y))\dd x\dd y \\
&= -\int_s^t \int ((\rho v + \omega V)\cdot\ast\nabla g_\eta)\alpha + \int_s^t \int (\rho v + \omega V)\cdot \nabla (g_\eta\ast\alpha) \\
&= -\int_s^t \iint (\rho v + \omega V)(\tau,x)\cdot \nabla g_\eta(y-x) \alpha(\tau,y)\dd x\dd y\dd \tau \\
&+ \int_s^t \iint (\rho v + \omega V)(\tau,x)\cdot \nabla g_\eta(x-y)\alpha(\tau,y)\dd x\dd y\dd \tau \\
&= 2\int_s^t \iint (\rho v + \omega V)(\tau,x)\cdot \nabla g_\eta(x-y)\alpha(\tau,y)\dd x\dd y\dd \tau.
\end{align*}
Remark that for almost any $\tau \in [0,T]$, we have
\begin{align*}
\iint& |\nabla g_\eta(x-y)\cdot(\rho v + \omega V)(\tau,x)\alpha(\tau,y)|\dd x\dd y \\
&\leq \iint\frac{1}{|x-y|}|(\rho v + \omega V)(\tau,x)|\alpha(\tau,y)| \dd x \dd y \\
&\leq \norm{\rho v + \omega V}_{L^\infty_T L^1}\underset{\tau \in [0,T]}{\sup}\underset{x \in \mathbb{R}^2}{\sup}\int \frac{|\alpha(\tau,y)|}{|x-y|}\dd y \\
&\leq (\norm{\rho}_{L^\infty_T L^1}\norm{v}_{L^\infty_T L^\infty} + \norm{\omega}_{L^\infty_T L^1}\norm{V}_{L^\infty_T L^\infty})R_T[\alpha]\norm{\alpha}_{L^\infty_T L^\infty}.
\end{align*}
where the last inequality follows from the proof of Claim (3) of Proposition \ref{proposition_biot_savart}. Thus by dominated convergence theorem,
\begin{align*}
\iint &g_\eta(x-y)(\alpha(t,x)\alpha(t,y) - \alpha(s,x)\alpha(s,y))\dd x\dd y \\
&\Tend{\eta}{0} 2\int_s^t \iint (\rho v + \omega V)(\tau,x)\cdot \nabla g(x-y)\alpha(\tau,y)\dd x\dd y\dd \tau.
\end{align*}
Combining the upper limit with \eqref{limite_T3_1} we get that
\begin{equation*}
T_3(t) -T_3(s) = 2\int_s^t \iint (\rho v + \omega V)(\tau,x)\cdot \nabla g(x-y)\alpha(\tau,y)\dd x\dd y\dd \tau.
\end{equation*}
Remark that since $\nabla g \ast \alpha = - V^\bot$, we have
\begin{equation*}
\int_s^t\iint (\omega V)(\tau,x)\cdot \nabla g(x-y)\alpha(\tau,y)\dd x\dd y\dd \tau = 0.
\end{equation*}
Finally, we get
\begin{equation*}
T_3(t) -T_3(s) = 2\int_s^t\iint v(\tau,x)\cdot \nabla g(x-y) \rho(\tau,x)\alpha(\tau,y)\dd x\dd y
\end{equation*}
which ends the proof of \ref{derivee_H_termes_3} for almost every $t \in [0,T]$.
\end{proof}

Now for the fourth term, we have:
\begin{claim}\label{derivee_H_termes_4}
\begin{align*}
\frac{\dd T_4}{\dd t}
=& 2\iint V(t,x)\cdot\nabla g(x-y)(\omega_N - \omega)(t,x)\dd x\dd \alpha_N(t,y) \\
&- 2\iint v(t,x)\cdot\nabla g(x-y)\rho(t,x)\dd x \dd \alpha_N(t,y) \\
&-\frac{2}{N}\sum_{i=1}^N p_i \cdot \nabla g \ast \alpha(q_i) \\
=:& \; T_{4,1} + T_{4,2} + T_{4,3}.
\end{align*}
\end{claim}
\begin{proof}[Proof of Claim \ref{derivee_H_termes_4}]
Recall that
\begin{align*}
T_4 =& - 2 \iint_{\mathbb{R}^2\times\mathbb{R}^2}g(x-y)\alpha(t,x)\dd x \dd \alpha_N(t,y) \\
=& - 2 \iint_{\mathbb{R}^2\times\mathbb{R}^2}g(x-y)\alpha(t,x)\omega_N(t,y)\dd \dd y \\
&- \frac{2}{N}\sum_{i=1}^N \int_{\mathbb{R}} g(x-q_i(t))\alpha(t,x)\dd x.
\end{align*}
Using $g_\eta$ in the same way we did for the previous claim, one can prove that $T_4$ is $W^{1,\infty}$ and that for almost every $t \in (0,T)$, 
\begin{align*}
\frac{\dd T_4}{\dd t} 
=& \bigg(-2\iint \nabla g(x-y)\cdot(V(t,x)\omega(t,x) + \rho(t,x)v(t,x))\dd x\dd \alpha_N(t,y) \\
&+ 2\iint \nabla g(x-y) \cdot V_N(t,y)\alpha(t,x)\omega_N(t,y)\dd x\dd y\bigg) \\
&+ \frac{2}{N}\sum_{i=1}^N p_i \cdot \int \nabla g(x - q_i)\alpha(t,x)\dd x \\
=& A_1 + A_2.
\end{align*}

Let us compute each term. For the second term in $A_1$, remark that:
\begin{align*}
&\iint\nabla g(x-y) \cdot V_N(t,y)\alpha(t,x)\omega_N(t,y)\dd x\dd y \\
&= - \iiint \nabla g(x-y)\cdot \nabla^\bot g(y-z)\alpha(t,x)\omega_N(t,y)\dd x\dd y\dd \alpha_N(t,z) \\
&= \iiint \nabla^\bot g(x-y)\cdot \nabla g(y-z)\alpha(t,x)\omega_N(t,y)\dd x\dd y\dd \alpha_N(t,z) \\
&= \iint \left(- \int \nabla^\bot g(y-x)\alpha(t,x)\dd x\right)\cdot \nabla g(y-z)\omega_N(t,y)\dd y\dd \alpha_N(t,z) \\
&= \iint V(t,y)\cdot\nabla g(y-z)\omega_N(t,y)\dd y \dd\alpha_N(t,z) \\
&= \iint V(t,x)\cdot \nabla g(x-y)\omega_N(t,x)\dd x\dd\alpha_N(t,y).
\end{align*}
It follows that
\begin{align*}
A_1
=& 2\iint V(t,x)\cdot\nabla g(x-y)(\omega_N - \omega)(t,x)\dd x\dd \alpha_N(t,y) \\
&- 2\iint v(t,x)\cdot\nabla g(x-y)\rho(t,x)\dd x\dd \alpha_N(t,y) \\
=& T_{4,1} + T_{4,2}.
\end{align*}
For the second term:
\begin{align*}
A_2
&= -\frac{2}{N}\sum_{i=1}^N p_i \cdot \int \nabla g(q_i - x)\alpha(t,x)\dd x \\
&= -\frac{2}{N}\sum_{i=1}^N p_i \cdot \nabla g \ast\alpha(q_i) \\
&= T_{4,3}.
\end{align*}
\end{proof}

We now need to differentiate the fifth term with respect to time.
\begin{claim}\label{derivee_H_termes_5}
$T_5$ is Lipschitz and for almost every $t \in [0,T]$ we have:
\begin{align*}
\frac{\dd T_5}{\dd t}
&= -2\int \nabla \omega \cdot (V-V_N)(\omega -\omega_N) \\
&=: T_{5,1}.
\end{align*}
\end{claim}
\begin{proof}[Proof of Claim \ref{derivee_H_termes_5}]
Let $(\chi_\eta)_{\eta > 0}$ be a sequence of mollifiers with compact support and set $\omega_N^\eta = \chi_\eta\ast\omega_N$. For $t,s \in [0,T]$, we have
\begin{align*}
T_5(t) - T_5(s) 
&= \int |\omega(t) - \omega_N(t)|^2 - \int |\omega(s) - \omega_N(s)|^2 \\
&= \underset{\eta \rightarrow 0}{\text{lim}} \int |\omega(t) - \omega_N^\eta(t)|^2 - \int |\omega(s) - \omega_N^\eta(s)|^2 \\
&= \underset{\eta \rightarrow 0}{\text{lim}} \int_s^\tau \frac{\dd}{\dd\tau}\left( \int |\omega(\tau) - \omega_N^\eta(\tau)|^2\right)\dd\tau
\end{align*}
Now,
\begin{align*}
\frac{\dd}{\dd\tau}\int |\omega(\tau) - \omega_N^\eta(\tau)|^2
=& 2\int(\omega-\omega_N^\eta)\div(\omega V - \chi_\eta \ast(\omega_N V_N)) \\
=& 2\int(\omega-\omega_N^\eta)\div(\omega(V-V_N) + (\omega-\omega_N^\eta)V_N) \\
&+ 2\int(\omega-\omega_N^\eta)\div(\omega_N^\eta V_N - \chi_\eta\ast(\omega_NV_N)) \\
=& 2 \int (\omega-\omega_N^\eta)\nabla\omega\cdot(V-V_N) \\
&+ 2 \int (\omega - \omega_N^\eta)\nabla(\omega-\omega_N^\eta)\cdot V_N \\
&+ 2 \int \omega \div(\omega_N^\eta V_N - \chi_\eta\ast(\omega_NV_N)) \\
&- 2 \int \omega_N^\eta \div(\omega_N^\eta V_N - \chi_\eta\ast(\omega_NV_N)).
\end{align*}
For the first term, remark that for any $1 < p < 2$, we have $V_N \in L^p_{\loc}$ and $\omega_N^\eta \Tend{\eta}{0} \omega_N$ in $L^q$ where $\displaystyle{\frac{1}{p} + \frac{1}{q} = 1}$. Thus since $\omega - \omega_N$ has compact support and $\nabla \omega \in L^\infty$, we have
\begin{align*}
2 \int (\omega-\omega_N^\eta)\nabla\omega\cdot(V-V_N) \Tend{\eta}{0} 2 \int (\omega-\omega_N)\nabla\omega\cdot(V-V_N).
\end{align*}
Remark also that the second term cancels out because $V_N$ is divergent-free:
\begin{equation*}
\int (\omega-\omega_N^\eta)V_N\cdot\nabla(\omega-\omega_N^\eta) = -\frac{1}{2}\int V_N \cdot \nabla |\omega-\omega^\eta_N|^2 \\
= 0
\end{equation*}
We will now prove that the two last term tends to zero. For the third term, we integrate by parts to get:
\begin{equation*}
2 \int \omega \div(\omega_N^\eta V_N - \chi_\eta\ast(\omega_NV_N))
= - 2 \int \nabla\omega \cdot(\omega_N^\eta V_N - \chi_\eta\ast(\omega_NV_N))
\end{equation*}
Since $\omega_N^\eta V_N \Tend{\eta}{0} \omega_N V_N$, $\chi_\eta\ast(\omega_NV_N) \Tend{\eta}{0} \omega_NV_N$ in $L^1$ and $\nabla \omega \in L^\infty$, we have
\begin{equation*}
2 \int \nabla\omega \cdot(\omega_N^\eta V_N - \chi_\eta\ast(\omega_NV_N)) \Tend{\eta}{0} 0.
\end{equation*}
For the last term, since all the $q_i$ are outside of the support of $\omega_N$ (see Remark \ref{solution_edo_bien_definie}), they are also outside of the support of $\omega_N^\eta$ if $\eta$ is small enough. Thus we have:
\begin{equation*}
V_N \in\ W^{1,p}(\supp{\omega_N^\eta})
\end{equation*}
for any $2 < p < +\infty$. By the commutator estimate of DiPerna and Lions in \cite{DiPernaLions} (see \cite[Lemma 2.2]{DeLellis} for more details) we get
\begin{equation*}
[V_N,\chi_\eta\ast]\omega_N \Tend{\eta}{0} 0 \qquad \text{in}  \; L^1_{\loc}.
\end{equation*}
Since $\omega_N^\eta$ is uniformly bounded in $L^\infty$, we obtain
\begin{equation*}
\int\omega_N^\eta[V_N,\chi_\eta\ast]\omega_N \Tend{\eta}{0} 0.
\end{equation*}
which ends the proof of Claim \ref{derivee_H_termes_5}.
\end{proof}

For the sixth term:
\begin{claim}\label{derivee_H_termes_6}
$T_6$ Lipschitz and for almost every $t \in [0,T]$ we have
\begin{align*}
\frac{\dd T_6}{\dd t}
=& 2\iint_{\mathbb{R}^2\times \mathbb{R}^2} \nabla g(x-y) \cdot V(t,x)(\omega-\omega_N)(t,x)(\omega-\omega_N)(t,y)\dd x \dd y \\
&+ 2\int_{\mathbb{R}^2\times \mathbb{R}^2} \nabla g(x-y)\cdot \nabla^\bot g\ast(\omega-\omega_N)(t,x)\omega_N(t,x)\dd x\dd(\rho-\rho_N)(y).
\end{align*}
\end{claim}

\begin{proof}[Proof of Claim \ref{derivee_H_termes_6}]
Using $g_\eta$ in the same way we did for Claim \ref{derivee_H_termes_3}, one can prove that $T_6$ is $W^{1,\infty}$ and that for almost every $t \in (0,T)$, 
\begin{align*}
\frac{\dd T_6}{\dd t} =& 2\iint_{\mathbb{R}^2\times \mathbb{R}^2} \nabla g(x-y) \cdot (V\omega-V_N\omega_N)(t,x)(\omega-\omega_N)(t,y)\dd x \dd y \\
=& 2\iint_{\mathbb{R}^2\times \mathbb{R}^2} \nabla g(x-y) \cdot V(t,x)(\omega-\omega_N)(t,x)(\omega-\omega_N)(t,y)\dd x \dd y \\
&+ 2\iint_{\mathbb{R}^2\times \mathbb{R}^2}\nabla g(x-y) \cdot (V - V_N)(t,x)\omega_N(t,x)(\omega-\omega_N)(t,y)\dd x \dd y.
\end{align*}
Since $V-V_N = -\nabla^\bot g*(\omega-\omega_N + \rho-\rho_N)$ we get
\begin{multline*}
\iint_{\mathbb{R}^2\times \mathbb{R}^2}\nabla g(x-y) \cdot (V - V_N)(t,x)\omega_N(t,x)(\omega-\omega_N)(t,y)\dd x \dd y \\
= \iint_{\mathbb{R}^2\times \mathbb{R}^2} \nabla g(x-y)\cdot \nabla^\bot g\ast(\omega-\omega_N)(t,x)\omega_N(t,x)\dd x\dd(\rho-\rho_N)(y)
\end{multline*}
which ends the proof of Claim \ref{derivee_H_termes_6}.
\end{proof}

Remark that all terms depending on $p_i$ (coming from the equations of Claim \ref{derivee_H_termes_1}, Claim \ref{derivee_H_termes_4} and Equation \eqref{derivee_H_termes_2}) cancels out, that is 
\begin{equation*}
T_{1,2} + T_{2,1} + T_{4,3} = 0.
\end{equation*}

Now let us group all terms of the form 
\begin{equation*}
\iint_{\mathbb{R}^2\times\mathbb{R}^2\backslash \Delta}v(t,x)\cdot \nabla g(x-y) \dd\mu(x)\dd\nu(y)
\end{equation*}
coming from the equations of Claims \ref{derivee_H_termes_1},  \ref{derivee_H_termes_3} and \ref{derivee_H_termes_4}:
\begin{align*}
&T_{1,3} + T_{3,1} + + T_{4,2} \\ 
=& 2\iint_{\mathbb{R}^2\times\mathbb{R}^2\backslash \Delta}v(t,x)\cdot \nabla g(x-y)(\rho_N\otimes(\alpha_N - \alpha) -\rho\otimes\alpha_N + \rho\otimes\alpha)(\dd x\dd y) \\
=& 2\iint_{\mathbb{R}^2\times\mathbb{R}^2\backslash \Delta}v(t,x)\cdot \nabla g(x-y)\dd(\rho-\rho_N)(t,x)\dd(\alpha-\alpha_N)(t,y) \\
=& 2\iint_{\mathbb{R}^2\times\mathbb{R}^2\backslash \Delta}v(t,x)\cdot \nabla g(x-y)\dd(\alpha-\alpha_N)(t,x)\dd(\alpha-\alpha_N)(t,y) \\
&- 2\iint_{\mathbb{R}^2\times\mathbb{R}^2\backslash \Delta}v(t,x)\cdot \nabla g(x-y)(\omega-\omega_N)(t,x)\dd x\dd(\alpha-\alpha_N)(t,y) \\
=& \iint_{\mathbb{R}^2\times\mathbb{R}^2\backslash \Delta}(v(t,x)-v(t,y))\cdot \nabla g(x-y)\dd(\alpha-\alpha_N)(t,x)\dd(\alpha-\alpha_N)(t,y) \\
&+ 2\int v^\bot(t,x)\cdot(V-V_N)(t,x)(\omega-\omega_N)(t,x)\dd x
\end{align*}
because
\begin{align*}
2&\iint_{\mathbb{R}^2\times\mathbb{R}^2\backslash \Delta}v(t,x)\cdot \nabla g(x-y)\dd(\alpha-\alpha_N)(t,x)\dd(\alpha-\alpha_N)(t,y)  \\
&= \iint_{\mathbb{R}^2\times\mathbb{R}^2\backslash \Delta}v(t,x)\cdot \nabla g(x-y)\dd(\alpha-\alpha_N)(t,x)\dd(\alpha-\alpha_N)(t,y) \\
&+ \iint_{\mathbb{R}^2\times\mathbb{R}^2\backslash \Delta}v(t,y)\cdot \nabla g(y-x)\dd(\alpha-\alpha_N)(t,y)\dd(\alpha-\alpha_N)(t,x) \\
&= \iint_{\mathbb{R}^2\times\mathbb{R}^2\backslash \Delta}(v(t,x)-v(t,y))\cdot \nabla g(x-y)\dd(\alpha-\alpha_N)(t,x)\dd(\alpha-\alpha_N)(t,y).
\end{align*}

Let us do the same for $V$ (there is only one term, coming from the equations of Claim \ref{derivee_H_termes_4}):
\begin{align*}
T_{4,1} &=  2\iint V(t,x)\cdot\nabla g(x-y)(\omega_N - \omega)(t,x)\dd x\dd \alpha_N(t,y) \\
&= 2\iint V(t,x)\cdot\nabla g(x-y)(\omega_N - \omega)(t,x)\dd x\dd(\alpha-\alpha_N)(t,y)
\end{align*}
because 
\begin{align*}
&\iint V(t,x)\cdot\nabla g(x-y)(\omega_N - \omega)(t,x)\alpha(t,y)\dd x\dd y \\
&= \int V(t,x)(\omega_N - \omega)(t,x)\cdot V^\bot(t,x)\dd x \\
&= 0.
\end{align*}

Thus
\begin{equation*}
T_{4,1} = -2\iint V^\bot(t,x)\cdot(V - V_N)(t,x)(\omega(t,x)-\omega_N(t,x))\dd x.
\end{equation*}

Putting all terms together, we obtain
\begin{equation}\label{derivee_energie_modulee}
\begin{aligned}
\frac{\dd \mathcal{H}_N}{\dd t} 
=& -\frac{2}{N}\sum_{i=1}^N\nabla v(q_i):(v(q_i)-p_i)^{\otimes 2} \\
&+ \iint_{\mathbb{R}^2\times\mathbb{R}^2\backslash \Delta}(v(t,x)-v(t,y))\cdot \nabla g(x-y)(\alpha-\alpha_N)^{\otimes 2}(\dd x\dd y) \\
&+ \int_{\mathbb{R}^2} A\cdot(V-V_N)(\omega-\omega_N) \\
&+ 2\iint_{\mathbb{R}^2\times \mathbb{R}^2} \nabla g(x-y) \cdot V(t,x)(\omega-\omega_N)(t,x)(\omega-\omega_N)(t,y)\dd x \dd y \\
&+ 2\iint_{\mathbb{R}^2\times \mathbb{R}^2} \nabla g(x-y)\cdot \nabla^\bot g\ast(\omega-\omega_N)(t,x)\omega_N(t,x)\dd x\dd(\rho-\rho_N)(y) \\
=:& \; R_1 + R_2 + R_3 + R_4 + R_5
\end{aligned}
\end{equation}
with $A = 2(v^\bot - V^\bot - \nabla \omega)$. In order to control $R_3$, we will need the following result:
\begin{lemma}\label{lemme_controle_termes_croises}
If $A \in W^{1,\infty}$, then there exists $\lambda > 0$ and a constant $C$ depending only on $\norm{A}_{W^{1,\infty}}$ such that
\begin{multline*}
\bigg|\int A\cdot(V - V_N)(\omega-\omega_N) \bigg| \leq C\bigg(\mathcal{F}(Q_N,\rho) + \norm{\omega-\omega_N}^2_{L^2} \\ 
+ \iint g(x-y)(\omega-\omega_N)(x)(\omega-\omega_N)(y)\dd x \dd y + N^{-\lambda}\bigg)
\end{multline*}
where $\mathcal{F}$ is the functionnal defined by \eqref{definition_energie_modulee_F}.
\end{lemma}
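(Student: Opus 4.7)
The strategy is to rewrite the integrand as a duality pairing against the signed measure $\alpha - \alpha_N$, where $\alpha := \omega + \rho$ and $\alpha_N := \omega_N + \rho_N$, and then invoke Proposition~\ref{coercivite_F}. Since $V - V_N = -\nabla^\bot g * (\alpha - \alpha_N)$, Fubini's theorem and the oddness of $\nabla^\bot g$ give
\begin{equation*}
\int_{\mathbb{R}^2} A \cdot (V - V_N)(\omega - \omega_N)\,\dd x = \int_{\mathbb{R}^2} \xi(y)\,\dd(\alpha - \alpha_N)(y),
\end{equation*}
where $\xi := (A(\omega - \omega_N)) * \nabla^\bot g$. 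Writing $\alpha - \alpha_N = (\omega + \rho - \omega_N) - \rho_N$, with $\omega + \rho - \omega_N \in L^1 \cap L^\infty$ compactly supported and $\rho_N = \frac{1}{N}\sum_i \delta_{q_i}$, Proposition~\ref{coercivite_F} applied to $\mu = \omega + \rho - \omega_N$ and $X_N = Q_N$ bounds the right-hand side by
\begin{equation*}
C\left(\norm{\xi}_{\mathcal{C}^{0,\theta}} N^{-\lambda} + \norm{\nabla \xi}_{L^2} \left(\mathcal{F}(\omega+\rho-\omega_N, Q_N) + (1+\norm{\omega+\rho-\omega_N}_{L^\infty})N^{-1} + \tfrac{\ln N}{N}\right)^{\!1/2}\right).
\end{equation*}

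Next I would estimate the two norms of $\xi$. Since $\nabla\nabla^\bot g$ is a Calderón--Zygmund kernel, one has
\begin{equation*}
\norm{\nabla \xi}_{L^2} \leq C\norm{A(\omega - \omega_N)}_{L^2} \leq C\norm{A}_{L^\infty}\norm{\omega - \omega_N}_{L^2}.
\end{equation*}
For the Hölder norm, I would use that $A(\omega - \omega_N)$ is bounded with compact support (uniformly in $N$, by the compactness of the supports of $\omega$ and $\omega_N$, cf.\ Lemma~\ref{lemme_controle_support}), and that $\nabla^\bot g$ is locally $L^p$ for every $p < 2$; classical Riesz potential estimates then give $\xi \in \mathcal{C}^{0,\theta}$ for some $\theta \in (0,1)$, with $\norm{\xi}_{\mathcal{C}^{0,\theta}} \leq C\norm{A}_{L^\infty}\norm{\omega - \omega_N}_{L^\infty}$. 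The main obstacle I anticipate is precisely this Hölder bound, which is slightly less textbook than the $L^2$ one but is standard for convolutions of compactly supported bounded functions with kernels of order $|x|^{-1}$.

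To conclude, Young's inequality $ab \leq \tfrac{1}{2}(a^2 + b^2)$ applied to the cross product $\norm{\omega - \omega_N}_{L^2} \cdot (\mathcal{F} + \ldots)^{1/2}$ turns it into $\tfrac{1}{2}\norm{\omega - \omega_N}_{L^2}^2 + \tfrac{1}{2}(\mathcal{F} + \ldots)$. The hypothesis $\sup_N \norm{\omega_N^0}_{L^\infty} < +\infty$ together with the fact that $V_N$ is divergence-free controls $\norm{\omega_N}_{L^\infty}$ uniformly, hence bounds $\norm{A}_{L^\infty}$ and $\norm{\omega + \rho - \omega_N}_{L^\infty}$ uniformly; absorbing all residual $N^{-1}$ contributions into $N^{-\lambda}$ for a possibly smaller $\lambda > 0$, one obtains the announced inequality.
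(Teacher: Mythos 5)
Your overall strategy is the same as the paper's: rewrite the cross term as a pairing $\int \xi\,\dd(\mu_N - \rho_N)$ with $\mu_N = \omega + \rho - \omega_N$, apply Proposition~\ref{coercivite_F}, bound $\norm{\nabla\xi}_{L^2}$ by Calderón--Zygmund, bound $\norm{\xi}_{\mathcal{C}^{0,\theta}}$ uniformly in $N$, and absorb the cross product by Young's inequality. The one place where you diverge from the paper, and where there is a genuine gap, is the uniform Hölder bound on $\xi$.

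You argue that $A(\omega - \omega_N)$ is bounded with \emph{uniformly compact support}, citing Lemma~\ref{lemme_controle_support}. That lemma, however, requires the transporting field to lie in $C_T W^{1,\infty}$, and the field $V_N$ transporting $\omega_N$ contains $-\nabla^\bot g * \rho_N$, a sum of point singularities, so it is not Lipschitz (it is not even bounded near the $q_i$). Uniform boundedness of $\supp(\omega_N)$ in $N$ is therefore not established by the result you cite, and it is not obvious how to obtain it directly. The paper avoids this altogether: it bounds $\norm{\xi_N}_{\mathcal{C}^{0,\theta}}$ by $\norm{\nabla\xi_N}_{L^p}$ for some $p>2$ via Morrey's embedding, then by $\norm{A}_{L^\infty}\norm{\omega - \omega_N}_{L^p}$ via Calderón--Zygmund, and finally uses that $V$ and $V_N$ are divergence-free, so $\norm{\omega(t)}_{L^p}$ and $\norm{\omega_N(t)}_{L^p}$ equal their initial values, which are uniformly bounded by hypothesis~\eqref{controle_uniforme_omega_N_0} and interpolation. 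Your route could be repaired without compact support by replacing the Riesz potential argument with the Yudovich log-Lipschitz estimate, which only needs $L^1 \cap L^\infty$ control on $A(\omega - \omega_N)$ (again available by conservation under divergence-free transport); but as written the dependence on uniform compact support of $\omega_N$ is an unjustified step.
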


\begin{proof}
Let us fix $I = \int A\cdot(V - V_N)(\omega-\omega_N)$, then
\begin{align*}
I =& -\iint A(x)\cdot\nabla^\bot g(x-y)(\omega-\omega_N)(x)\dd (\omega + \rho - \omega_N - \rho_N)(y)\dd x \\
=& \frac{1}{2}\iint (A^\bot(x)-A^\bot(y))\cdot \nabla g(x-y)(\omega-\omega_N)(x)(\omega-\omega_N)(y)\dd x \dd y \\
&-\int \nabla g \ast[\cdot A^\bot(\omega-\omega_N)](y)\dd(\rho - \rho_N)(y) \\
=:& I_1 + I_2.
\end{align*}
By \cite[Lemma 4.3]{Serfaty},
\begin{align*}
I_1 = c\int \nabla A^\bot :[g\ast(\omega-\omega_N),g\ast(\omega-\omega_N)]
\end{align*}
where for $i,j \in \{1,2\}$ and $h$ regular enough,
\begin{align*}
[h,h]_{i,j} = 2\partial_i h \partial_j h - |\nabla h|^2\delta_{i,j}. 
\end{align*}
Hence
\begin{equation*}
|I_1| \leq C\norm{\nabla A}_{L^\infty}\norm{\nabla g\ast(\omega-\omega_N)}^2_{L^2}.
\end{equation*}
Therefore by \eqref{equality_kinetic_energy_mod_energy} we have
\begin{equation}\label{bound_I_1}
|I_1| \leq C\norm{\nabla A}_{L^\infty}\iint g(x-y)(\omega-\omega_N)(x)(\omega-\omega_N)(y) \dd x \dd y.
\end{equation}
Now denote 
\begin{equation*}
\xi_N := -\nabla g \ast[\cdot A^\bot(\omega-\omega_N)].
\end{equation*}
We can write
\begin{equation*}
I_2 = \int \xi_N(y)(\rho - \rho_N)(\dd y).
\end{equation*}

Using Proposition \ref{coercivite_F} (proved in \cite{Serfaty}), we get that for any $0 <\theta < 1$, there exists constants $C,\lambda > 0$ such that
\begin{align*}
|I_2| \leq& C|\xi_N|_{C^{0,\theta}}N^{-\lambda} \\ 
&+ C\norm{\nabla \xi_N}_{L^2}\bigg(\mathcal{F}(Q_N,\rho) + (1+\norm{\rho}_{L^\infty})N^{-1} + \frac{\ln(N)}{N}\bigg)^\frac{1}{2}.
\end{align*}

By Morrey's inequality (see for example \cite[Theorem 9.12]{Brezis}) and Proposition \ref{Calderon_Zygmund}, for some $p > 2$ depending only on $\theta$, we have
\begin{align*}
|\xi_N|_{C^{0,\theta}} &\leq C\norm{\nabla^2 g \ast[\cdot A^\bot(\omega-\omega_N)]}_{L^p} \\
&\leq C\norm{A(\omega-\omega_N)}_{L^p}  \\
&\leq C\norm{A}_{L^\infty}\norm{\omega-\omega_N}_{L^p} \\
&\leq C\norm{A}_{L^\infty}\left(\norm{\omega^0}_{L^p} + \norm{\omega_N^0}_{L^p}\right)
\end{align*}
by Remark \ref{solution_edo_bien_definie}. Therefore by Assumption \eqref{controle_uniforme_omega_N_0},
\begin{align*}
|\xi_N|_{C^{0,\theta}} &\leq C.
\end{align*}
where $C$ is independent of $N$. Now, by Proposition \ref{Calderon_Zygmund},
\begin{equation*}
\norm{\nabla^2 g\ast[\cdot A^\bot(\omega-\omega_N)]}_{L^2} \leq \norm{A}_{L^\infty}\norm{\omega-\omega_N}_{L^2}.
\end{equation*}
Thus we obtain the inequality we wanted to prove.
\end{proof}

Let us get back to the expression of $\displaystyle{\frac{\dd\mathcal{H}_N}{\dd t} = R_1 + R_2 + R_3 + R_4 + R_5}$ given by \eqref{derivee_energie_modulee}. We have
\begin{equation}\label{borne_R_1_derivee_H_N}
|R_1| \leq \frac{2\norm{\nabla v}_{L^\infty}}{N}\sum_{i=1}^N |v(q_i) - p_i|^2 \leq 2\norm{\nabla v}_{L^\infty}\mathcal{H}_N.
\end{equation}
For the second term,
\begin{align*}
R_2 =& \iint_{\mathbb{R}^2\times\mathbb{R}^2\backslash \Delta}(v(t,x)-v(t,y))\cdot \nabla g(x-y)(\omega-\omega_N)^{\otimes 2}(\dd x\dd y) \\
&+ \iint_{\mathbb{R}^2\times\mathbb{R}^2\backslash \Delta}(v(t,x)-v(t,y))\cdot \nabla g(x-y)(\rho-\rho_N)^{\otimes 2}(\dd x\dd y) \\
&+ 2 \iint_{\mathbb{R}^2\times\mathbb{R}^2\backslash \Delta} (v(t,y)-v(t,x))\cdot \nabla g(x-y) (\omega-\omega_N)(x) \dd x \dd (\rho-\rho_N)(y) \\
=:& R_{2,1} + R_{2,2} + R_{2,3}.
\end{align*}
We can bound  $R_{2,1}$ as we did to obtain Inequality \eqref{bound_I_1} and we get
\begin{equation}\label{borne_R_2_1_derivee_H_N}
|R_{2,1}| \leq C\norm{\nabla v}_{L^\infty}\iint_{\mathbb{R}^2\times\mathbb{R}^2} g(x-y)(\omega-\omega_N)(x)(\omega-\omega_N)(y)\dd x \dd y.
\end{equation}

Using Proposition \ref{main_functional_inequality_serfaty} (proved in \cite{Serfaty}) with $\mu = \rho \in L^\infty$, we get
\begin{equation}\label{borne_R_2_2_derivee_H_N}
R_{2,2} 
\leq C\norm{v}_{W^{1,\infty}} \left(\mathcal{F}(Q_N,\rho) + (1+\norm{\rho}_{L^\infty})N^{-\lambda}\right).
\end{equation}
Now,
\begin{align*}
R_{2,3} = \int \chi_N \dd(\rho-\rho_N) 
\end{align*}
with $\chi_N = -2 v\cdot \nabla g \ast(\omega-\omega_N) + 2\nabla g\ast(\cdot v(\omega-\omega_N))$. Using Proposition \ref{coercivite_F}, we get that
\begin{align*}
|R_{2,3}| \leq&  C\bigg(|\chi_N|_{\mathcal{C}^{0,\theta}}N^{-\lambda} + \norm{\nabla \chi_N}_{L^2}\bigg(\mathcal{F}(Q_N,\rho) \\&+ (1+\norm{\rho}_{L^\infty})N^{-1}+\frac{\ln(N)}{N}\bigg)^\frac{1}{2}\bigg).
\end{align*}
Now by Morrey's inequality (see for example \cite[Theorem 9.12]{Brezis}), Hardy-Littlewood-Sobolev inequality (see for example \cite[Theorem 1.7]{BahouriCheminDanchin}) and Proposition \ref{Calderon_Zygmund}, for some $p > 2$ we have
\begin{align*}
|\chi_N|_{\mathcal{C}^{0,\theta}} 
\leq& C(\norm{\nabla v}_{L^\infty}\norm{\nabla g \ast(\omega-\omega_N)}_{L^p} +\norm{v}_{L^\infty}\norm{\nabla^2 g \ast(\omega-\omega_N)}_{L^p} \\
&+ \norm{\nabla^2 g \ast(\cdot v(\omega-\omega_N))}_{L^p}) \\
&\leq C\norm{v}_{W^{1,\infty}}(\norm{\omega-\omega_N}_{L^p} + \norm{\omega-\omega_N}_{L^\frac{2p}{p+2}}) \\
&\leq C\norm{v}_{W^{1,\infty}}(\norm{\omega-\omega_N}_{L^1} + \norm{\omega-\omega_N}_{L^\infty}) \\
&\leq C\norm{v}_{W^{1,\infty}}(\norm{\omega^0}_{L^1}+\norm{\omega_N^0}_{L^1} + \norm{\omega^0}_{L^\infty} + \norm{\omega_N^0}_{L^\infty})
\end{align*}
by Remark \ref{solution_edo_bien_definie}. Therefore by Assumption \eqref{controle_uniforme_omega_N_0},
\begin{align*}
|\chi_N|_{C^{0,\theta}} &\leq C.
\end{align*}
Moreover, using Proposition \ref{Calderon_Zygmund} and Equation \eqref{equality_kinetic_energy_mod_energy}, we have
\begin{align*}
\norm{\nabla \chi_N}_{L^2} 
\leq& \norm{\nabla v}_{L^\infty}\norm{\nabla g \ast(\omega-\omega_N)}_{L^2} +\norm{v}_{L^\infty}\norm{\nabla^2 g \ast(\omega-\omega_N)}_{L^2} \\
&+ \norm{\nabla^2 g \ast(\cdot v(\omega-\omega_N))}_{L^2} \\
&\leq C\norm{v}_{W^{1,\infty}}\bigg(\bigg(\iint_{\mathbb{R}^2\times\mathbb{R}^2}g(x-y)(\omega-\omega_N)(x)(\omega-\omega_N)(y)\dd x \dd y\bigg)^\frac{1}{2} \\
&+ \norm{\omega-\omega_N}_{L^2}\bigg).
\end{align*}
Therefore
\begin{equation}\label{borne_R_2_3_derivee_H_N}
\begin{aligned}
|R_{2,3}| \leq& C\norm{v}_{W^{1,\infty}}\bigg(\norm{\omega-\omega_N}^2_{L^2} \\ &+\iint_{\mathbb{R}^2\times\mathbb{R}^2}g(x-y)(\omega-\omega_N)(x)(\omega-\omega_N)(y)\dd x \dd y \\
&+ \mathcal{F}(Q_N,\rho) + (1+\norm{\rho}_{L^\infty})N^{-\lambda} \bigg). 
\end{aligned}
\end{equation}
Combining inequalities \eqref{borne_R_2_1_derivee_H_N}, \eqref{borne_R_2_2_derivee_H_N} and \eqref{borne_R_2_3_derivee_H_N} we find that
\begin{equation}\label{borne_R_2_derivee_H_N}
|R_2| \leq C\norm{v}_{W^{1,\infty}}(\mathcal{H}_N + \mathcal{F}(Q_N,\rho) + (1+\norm{\rho}_{L^\infty})N^{-\lambda}).
\end{equation}

Now using Lemma \ref{lemme_controle_termes_croises}, since $V$, $v$ and $\nabla \omega$ are in $L^\infty$,
\begin{equation}\label{borne_R_3_derivee_H_N}
\begin{aligned}
|R_3| \leq& C\bigg(\mathcal{F}(Q_N,\rho) + \norm{\omega-\omega_N}^2_{L^2} + N^{-\lambda} \\
&+ \iint g(x-y)(\omega-\omega_N)(x)(\omega-\omega_N)(y)\dd x \dd y \bigg).
\end{aligned}
\end{equation}
We can bound $R_4$ as we did to obtain Inequality \eqref{bound_I_1} (with $A = V$) and we get
\begin{equation}\label{borne_R_4_derivee_H_N}
|R_4| \leq C\norm{\nabla V}_{L^\infty}\iint g(x-y)(\omega-\omega_N)(x)(\omega-\omega_N)(y) \dd x \dd y.
\end{equation}
Finally we have
\begin{equation*}
R_5 = \int \nabla g \ast(\cdot u_N)\dd(\rho-\rho_N)
\end{equation*}
with $u_N = -\omega_N\nabla^\bot g\ast(\omega-\omega_N)$. Using Proposition \ref{coercivite_F} we get
\begin{align*}
|R_5| \leq& C\bigg(|\nabla g\ast(\cdot u_N)|_{\mathcal{C}^{0,\theta}}N^{-\lambda} + \norm{\nabla^2 g\ast(\cdot u_N)}_{L^2}\bigg(\mathcal{F}(Q_N,\rho) \\
&+ (1+\norm{\rho}_{L^\infty})N^{-1}+\frac{\ln(N)}{N}\bigg)^\frac{1}{2}\bigg)
\end{align*}
Using Morrey's inequality (see for example \cite[Theorem 9.12]{Brezis}), Proposition \ref{Calderon_Zygmund} and Hardy-Littlewood-Sobolev inequality (see for example \cite[Theorem 1.7]{BahouriCheminDanchin}), we get that for some $p > 2$,
\begin{align*}
|\nabla g\ast(\cdot u_N)|_{\mathcal{C}^{0,\theta}} 
&\leq C\norm{\nabla^2 g\ast(\cdot u_N)}_{L^p} \\
&\leq C\norm{u_N}_{L^p} \\
&\leq C\norm{\omega_N}_{L^\infty}\norm{\nabla g \ast(\omega-\omega_N)}_{L^p} \\
&\leq C\norm{\omega_N}_{L^\infty}\norm{\omega-\omega_N}_{L^\frac{2p}{p+2}} \\
&\leq C\norm{\omega_N}_{L^\infty}(\norm{\omega-\omega_N}_{L^1} +  \norm{\omega-\omega_N}_{L^\infty}) \\
&\leq C\norm{\omega_N^0}_{L^\infty}(\norm{\omega^0}_{L^1} +\norm{\omega_N^0}_{L^1} + \norm{\omega^0}_{L^\infty}+\norm{\omega_N^0}_{L^\infty})
\end{align*}
by Remark \ref{solution_edo_bien_definie}. Therefore by Assumption \eqref{controle_uniforme_omega_N_0},
\begin{equation*}
|\nabla g\ast(\cdot u_N)|_{\mathcal{C}^{0,\theta}} \leq C.
\end{equation*}
Now using Proposition \ref{Calderon_Zygmund} again, we get
\begin{align*}
\norm{\nabla^2 g\ast(\cdot u_N)}_{L^2} &\leq C\norm{u_N}_{L^2} \\
&\leq C\norm{\omega_N}_{L^\infty}\iint_{\mathbb{R}^2\times\mathbb{R}^2} g(x-y)(\omega-\omega_N)(x)(\omega-\omega_N)(y) \dd x \dd y
\end{align*}
by Equation \eqref{equality_kinetic_energy_mod_energy}. Therefore by Assumption \eqref{controle_uniforme_omega_N_0},
\begin{equation}\label{borne_R_5_derivee_H_N}
\begin{aligned}
|R_5| \leq& C\bigg(\mathcal{F}(Q_N,\rho) + (1+\norm{\rho}_{L^\infty})N^{-\lambda} \\
&+ \iint_{\mathbb{R}^2\times\mathbb{R}^2} g(x-y)(\omega-\omega_N)(x)(\omega-\omega_N)(y) \dd x \dd y\bigg)
\end{aligned}
\end{equation}

Combining inequalities \eqref{borne_R_1_derivee_H_N}, \eqref{borne_R_2_derivee_H_N}, \eqref{borne_R_3_derivee_H_N}, \eqref{borne_R_4_derivee_H_N} and \eqref{borne_R_5_derivee_H_N} we get
\begin{align*}
\left|\frac{\dd\mathcal{H}_N}{\dd t}\right| \leq& C\bigg(\mathcal{H}_N + \mathcal{F}(Q_N,\rho) + N^{-\beta} \bigg).
\end{align*}
for some $\beta > 0$. We are only remained to bound $\mathcal{F}(Q_N,\rho)$ by $\mathcal{H}_N$. Let us write
\begin{align*}
\mathcal{F}(Q_N,\rho) =& \iint_{\mathbb{R}^2\times\mathbb{R}^2\backslash \Delta} g(x-y)(\omega+\rho-\omega_N - \rho_N)^{\otimes 2}(\dd x \dd y) \\
&-\iint_{\mathbb{R}^2\times\mathbb{R}^2} g(x-y)(\omega-\omega_N)(x)(\omega-\omega_N)(y)\dd x \dd y \\
&+ 2\iint_{\mathbb{R}^2\times\mathbb{R}^2\backslash \Delta} g(x-y)(\omega - \omega_N)(x) \dd x \dd(\rho-\rho_N)(y) \\
&\leq \mathcal{H}_N + 0 + 2\iint_{\mathbb{R}^2\times\mathbb{R}^2\backslash \Delta} g(x-y)(\omega - \omega_N)(x) \dd x \dd(\rho-\rho_N)(y). 
\end{align*}
To bound the upper integral, we use Proposition \ref{coercivite_F} to get that
\begin{equation}\label{controle_nrj_croises}
\begin{aligned}
2\bigg|\int g\ast(\omega-\omega_N)&\dd(\rho-\rho_N)\bigg| \\
\leq& C\bigg(|g\ast(\omega-\omega_N)|_{\mathcal{C}^{0,\theta}}N^{-\lambda} + 2C\norm{\nabla g\ast(\omega-\omega_N)}_{L^2} \\
&\times \bigg(\mathcal{F}(Q_N,\rho) + (1+\norm{\rho}_{L^\infty})N^{-1}+\frac{\ln(N)}{N}\bigg)^\frac{1}{2}\bigg) \\
\leq& C|g\ast(\omega-\omega_N)|_{\mathcal{C}^{0,\theta}}N^{-\lambda} + C\norm{\nabla g\ast(\omega-\omega_N)}^2_{L^2} \\
&+ \frac{1}{2}\bigg(\mathcal{F}(Q_N,\rho) + (1+\norm{\rho}_{L^\infty})N^{-1}+\frac{\ln(N)}{N}\bigg).
\end{aligned}
\end{equation}
Now by Morrey's inequality (see for example \cite[Theorem 9.12]{Brezis}) and Hardy-Littlewood-Sobolev inequality (see for example \cite[Theorem 1.7]{BahouriCheminDanchin}), for some $p > 2$,
\begin{equation}\label{bound_holder_g_ast_omega_omega_N}
\begin{aligned}
|g\ast(\omega-\omega_N)|_{\mathcal{C}^{0,\theta}} &\leq C\norm{\nabla g\ast(\omega-\omega_N)}_{L^p} \\
&\leq C\norm{\omega-\omega_N}_{L^\frac{2p}{p+2}} \\
&\leq C\norm{\omega^0}_{L^\frac{2p}{p+2}}+\norm{\omega^0_N}_{L^\frac{2p}{p+2}} \\
&\leq C(\norm{\omega^0}_{L^1\cap L^\infty}+\norm{\omega^0_N}_{L^1\cap L^\infty}) \\
&\leq C
\end{aligned}
\end{equation}
by Assumption \eqref{controle_uniforme_omega_N_0}. Using \eqref{equality_kinetic_energy_mod_energy} we also have
\begin{align*}
\norm{\nabla g\ast(\omega-\omega_N)}^2_{L^2} \leq \mathcal{H}_N.
\end{align*}
Therefore 
\begin{align*}
\mathcal{F}(Q_N,\rho) \leq CN^{-\lambda} + C\mathcal{H}_N + \frac{1}{2}\mathcal{F}(Q_N,\rho)
\end{align*}
for some for some $\lambda > 0$, hence 
\begin{equation}\label{bound_F_N_H_N}
\mathcal{F}(Q_N,\rho) \leq C(\mathcal{H}_N + N^{-\lambda}).
\end{equation}
It follows that
\begin{align*}
\frac{\dd\mathcal{H}_N}{\dd t} \leq& C\bigg(\mathcal{H}_N + N^{-\beta} \bigg).
\end{align*}
Applying Grönwall's lemma we get
\begin{equation*}
\mathcal{H}_N(t) \leq (\mathcal{H}_N(0) + CN^{-\beta})e^{CT},
\end{equation*}
that is Inequality \eqref{controle_energie_modulee}.

\subsection{Proof of Proposition \ref{proposition_coerciveness_energy}}

Let $\phi$ be a smooth test function with compact support in $\mathbb{R}^4$. We have:
\begin{align*}
&\int \phi(x,\xi)\left(\frac{1}{N}\sum_{i=1}^N \delta_{(q_i,p_i)} - \rho\otimes\delta_{\xi = v(x)}\right)(\dd x\dd\xi) \\
=& \frac{1}{N}\sum_{i=1}^N[\phi(q_i,p_i) - \phi(q_i,v(q_i))]\\
&+  \frac{1}{N}\sum_{i=1}^N\phi(q_i,v(q_i))- \int \phi(x,v(x))\rho(x)\dd x \\
=:& \; T_1 + T_2.
\end{align*}

Let us bound $T_1$:
\begin{align*}
|T_1| &\leq \frac{1}{N}\sum_{i=1}^N \norm{\phi(q_i,\cdot)}_{W^{1,\infty}}|p_i - v(q_i)| \\
&\leq \norm{\phi}_{H^5}\frac{1}{N}\sum_{i=1}^N |p_i - v(q_i)|
\end{align*}
by Sobolev embedding. Using Cauchy-Schwarz inequality we get
\begin{equation}\label{terme_1_proposition_coercivite}
|T_1| \leq C\norm{\phi}_{W^{1,\infty}}\left(\frac{1}{N}\sum_{i=1}^N|p_i - v(q_i)|^2\right)^\frac{1}{2}.
\end{equation}
For the second term:
\begin{align*}
|T_2| &= \left|\int \phi(x,v(x))(\rho-\rho_N)(\dd x)\right| \\
&\leq \norm{\phi\circ(I_d,v)}_{H^2}\norm{\rho-\rho_N}_{H^{-2}}.
\end{align*}

Let $f := (I_d,v)$, then
\begin{equation}
\label{norme_l2_psi_circ_f}
\norm{\phi \circ f}_{H^2} \leq C(1+\norm{\nabla v}_{W^{1,\infty}}^2)\underset{0 \leq k \leq 2}{\sup} \norm{\nabla^k \phi\circ f}_{L^2}.
\end{equation}

Now let $\psi := \partial^\alpha \phi$ for some multi-index $\alpha$ of length $k \in \{0,1,2\}$, then
\begin{align*}
\norm{\psi\circ f}_{L^2}^2 &= \int |\psi(x,v(t,x))|^2\dd x \\
&\leq \underset{y}{\sup}\int |\psi(x,y)|^2\dd x \\
&\leq \norm{F}_{W^{3,1}}
\end{align*}
where $F(y) := \int |\psi(x,y)|^2\dd x$ (see for example \cite[Corollary 9.13]{Brezis}). Remark that
\begin{align*}
|\partial_{y_i}F(y)| &= 2\left|\int \partial_{y_i}\psi(x,y) \psi(x,y) \dd x\right| \\
&\leq \int  |\partial_{y_i}\psi(x,y)|^2\dd x + \int  |\psi(x,y)|^2\dd x.
\end{align*}

Doing the same computations for all derivatives of $F$ of order less or equal than three and integrating in $y$ gives us
\begin{equation*}
\norm{F}_{W^{3,1}} \leq C\norm{\psi}_{H^3}^2.
\end{equation*}
From \eqref{norme_l2_psi_circ_f} and the upper equation we get
\begin{equation}\label{controle_phi_rond_f}
\norm{\phi \circ f}_{H^2} \leq C(1+\norm{\nabla v}_{W^{1,\infty}}^2)\norm{\phi}_{H^5}.
\end{equation}

Now, by \cite[Proposition 3.10]{Rosenzweig} (which is a refined version of Proposition \ref{coercivite_F}), we have
\begin{align*}
\norm{\rho - \rho_N}_{H^{-2}} &\leq C(|\mathcal{F}(Q_N,\rho)|^\frac{1}{2} + N^{-\frac{1}{2}}|\ln(N)|^\frac{1}{2} + (1+\norm{\rho}_{L^\infty})N^{-\frac{1}{2}}).
\end{align*}
Using Assumption \eqref{controle_uniforme_omega_N_coerc} we can bound $\mathcal{F}(Q_N,\rho)$ as in \eqref{bound_F_N_H_N} to get
\begin{equation}\label{bound_F_H_coerc}
|\mathcal{F}(Q_N,\rho)| \leq C(\mathcal{H}(\omega,\rho,v,\omega_N,Q_N,P_N) + N^{-\lambda})
\end{equation}
and therefore
\begin{align*}
\norm{\rho - \rho_N}_{H^{-2}} &\leq C(\mathcal{H}(\omega,\rho,v,\omega_N,Q_N,P_N)^\frac{1}{2} + (1+\norm{\rho}_{L^\infty})N^{-\lambda})
\end{align*}
for some $\lambda > 0$. Combining the upper inequality with \eqref{terme_1_proposition_coercivite} and \eqref{controle_phi_rond_f} we get that 
\begin{multline*}
\left|\int \phi(x,\xi)\left(\frac{1}{N}\sum_{i=1}^N \delta_{(q_i,p_i)} - \rho\otimes\delta_{\xi=v(x)}\right)\right| \\
\leq  C(1+\norm{\nabla v}_{W^{1,\infty}}^2)\norm{\phi}_{H^5}\big(\mathcal{H}(\omega,\rho,v,\omega_N,Q_N,P_N)^\frac{1}{2}
+ C(1+\norm{\rho}_{L^\infty})N^{-\beta}\big)^\frac{1}{2}
\end{multline*}
for some $\beta > 0$. Thus we get \eqref{coerciveness_energy}. It follows from this estimate that if 
\begin{equation*}
\mathcal{H}(\omega,\rho,v,\omega_N,Q_N,P_N) \underset{N \rightarrow \infty}{\longrightarrow} 0
\end{equation*}
then
\begin{equation*}
\frac{1}{N}\sum_{i=1}^N \delta_{(q_i,p_i)} \underset{H^{-5}}{\longrightarrow} \rho\otimes \delta_{\xi = v(t,x)}.
\end{equation*}
By equality \eqref{equality_kinetic_energy_mod_energy} we also have
\begin{align*}
\norm{\nabla g\ast(\omega_N - \omega)}^2_{L^2} + \norm{\omega - \omega_N}^2_{L^2} \Tend{N}{+\infty} 0
\end{align*}
Now remark that for any $\mu \in L^2$,
\begin{equation}\label{equality_H_minus_1_kinetic_energy}
\begin{aligned}
\norm{\nabla g \ast \mu}^2_{L^2} &= C\norm{\widehat{\nabla g}\widehat{\mu}}^2_{L^2} \\
&= C\int \frac{|\widehat{\mu}(\xi)|^2}{|\xi|^2}\dd \xi \\
&= C\norm{\mu}^2_{\dot H^{-1}}
\end{aligned}
\end{equation}
and therefore
\begin{equation*}
\omega_N - \omega \Tend{N}{+\infty} 0 \; \; \text{in} \; L^2\cap \dot H^{-1}.
\end{equation*}
Finally, using Inequality \eqref{bound_F_H_coerc}, we have
\begin{equation*}
\mathcal{F}(Q_N,\rho) \Tend{N}{+\infty} 0
\end{equation*}
and therefore by \cite[Proposition 3.10]{Rosenzweig} we get that for any $a < -1$,
\begin{equation*}
\rho_N \Tend{N}{+\infty} \rho  \; \; \text{in} \; H ^{a}
\end{equation*}
which concludes the proof of Proposition \ref{proposition_coerciveness_energy}.

\subsection{Proof of Proposition \ref{proposition_donnees_initiales_bien_preparees}}

We have
\begin{align*}
\int_{(\mathbb{R}^2\times\mathbb{R}^2)\backslash \Delta} &g(x-y)(\rho_0 + \omega_0 - \rho_{N,0} - \omega_{N,0})^{\otimes 2}(\dd x\dd y) \\
=& \mathcal{F}(Q_{N,0},\rho_0) + \int_{(\mathbb{R}^2\times\mathbb{R}^2)\backslash \Delta} g(x-y)(\omega_0 - \omega_{N,0})^{\otimes 2}(\dd x\dd y) \\
&-2\int_{(\mathbb{R}^2\times\mathbb{R}^2)\backslash \Delta} g(x-y)(\omega_0 - \omega_{N,0})(x)\dd x\dd(\rho_0-\rho_{N,0})(y)
\end{align*}
It is proved in Theorem 1.1 of \cite{Duerinckx} that the weak-$\ast$ convergence of $\rho_{N,0}$ to $\rho_0$ and the convergence of 
\begin{equation*}
\frac{1}{N^2}\sum_{1 \leq i \neq j \leq N} g(q_i^0 - q_j^0)
\end{equation*}
to 
\begin{equation*}
\iint_{\mathbb{R} ^2\times\mathbb{R}^2} g(x-y)\rho_0(x)\rho_0(y)\dd x \dd y
\end{equation*}
ensures that
\begin{equation}\label{convergence_F_init_0}
\mathcal{F}(Q_{N,0},\rho_0) \Tend{N}{+\infty} 0.
\end{equation}
Using \eqref{equality_kinetic_energy_mod_energy}, \eqref{equality_H_minus_1_kinetic_energy} and the convergence of $\omega_{N,0}$ to $\omega_0$ in $\dot H^{-1}$ we have that
\begin{equation*}
\int_{(\mathbb{R}^2\times\mathbb{R}^2)\backslash \Delta} g(x-y)(\omega_0 - \omega_{N,0})^{\otimes 2}(\dd x\dd y) \Tend{N}{+\infty} 0.
\end{equation*}
Using inequalities \eqref{controle_nrj_croises} and \eqref{bound_holder_g_ast_omega_omega_N} we have 
\begin{align*}
\bigg|&\int_{(\mathbb{R}^2\times\mathbb{R}^2)\backslash \Delta} g(x-y)(\omega_0 - \omega_{N,0})(x)\dd x\dd(\rho_0-\rho_{N,0})(y)\bigg| \\
\leq& C\bigg((\norm{\omega_0}_{L^1\cap L^\infty}+\norm{\omega_{N,0}}_{L^1\cap L^\infty})N^{-\lambda} \\ 
&+\norm{\nabla g\ast(\omega_0-\omega_{N,0})}_{L^2}\bigg(\mathcal{F}(Q_{N,0},\rho_0) + (1+\norm{\rho_0}_{L^\infty})N^{-1}+\frac{\ln(N)}{N}\bigg)^\frac{1}{2}\bigg).
\end{align*}
Using Assumption \eqref{controle_uniforme_omega_N_0_well_prep_init_data}, equations \eqref{equality_kinetic_energy_mod_energy}, \eqref{equality_H_minus_1_kinetic_energy}, \eqref{convergence_F_init_0} and the convergence of $\omega_{N,0}$ to $\omega_0$ in $\dot H^{-1}$ we get that
\begin{equation*}
\int_{(\mathbb{R}^2\times\mathbb{R}^2)\backslash \Delta} g(x-y)(\omega_0 - \omega_{N,0})(x)\dd x \dd(\rho_0-\rho_{N,0})(y) \Tend{N}{+\infty} 0
\end{equation*}
which ends the proof of Proposition \ref{proposition_donnees_initiales_bien_preparees}.


\end{document}